\documentclass{article}

\usepackage{amsmath,amsthm,amssymb}
\usepackage{a4wide}
\usepackage[utf8]{inputenc}
\usepackage{hyperref}

\usepackage{enumitem}
\usepackage{color}
\usepackage{graphicx}

\SetLabelAlign{CenterWithParen}{\hfil(\makebox[1.0em]{#1})\hfil}


\def\eps{\varepsilon}

\def\rr{\mathbb R}

\def\nn{\mathbb N}

\DeclareMathOperator{\sgn}{sgn}

\providecommand{\abs}[1]{\lvert#1\rvert}
\providecommand{\norm}[1]{\lVert#1\rVert}


\newtheorem{theorem}{Theorem}[section]
\newtheorem{lemma}[theorem]{Lemma}
\newtheorem{proposition}[theorem]{Proposition}

\theoremstyle{definition}
\newtheorem{defi}{Definition}

\theoremstyle{remark}
\newtheorem{remark}{Remark}

\numberwithin{equation}{section}
\setcounter{secnumdepth}{3}

\setlength{\parindent}{0in}

\begin{document}

\title{\textbf{Error estimates for finite difference schemes
    associated with Hamilton-Jacobi equations on a junction}}
\author{Jessica Guerand\footnote{ D\'epartement de Mathématiques et applications, \'Ecole Normale Sup\'erieure, CNRS, PSL Research University, 
45 rue d’Ulm, 75005 Paris, France. \texttt{jessica.guerand@ens.fr}} \mbox{} and Marwa
Koumaiha\footnote{LAMA, UMR 8050, Univ. Paris-Est Créteil, 61 avenue
  du général de Gaulle, 94010 Créteil cedex, France \& Université
  Libanaise, École Doctorale des Sciences et de Technologie, Hadath,
  Beirut, Liban, \texttt{marwa.koumaiha@math.cnrs.fr}}}

\date{\today}

\maketitle

\begin{abstract}
  This paper is concerned with monotone (time-explicit) finite
  difference schemes associated with first order Hamilton-Jacobi
  equations posed on a junction. They extend the schemes recently
  introduced by Costeseque, Lebacque and Monneau (2013) to general
  junction conditions. On the one hand, we prove the convergence of the
  numerical solution towards the viscosity solution of the
  Hamilton-Jacobi equation as the mesh size tends to zero for general
  junction conditions. On the other hand, we derive optimal error estimates of
  order $(\Delta x)^{\frac12}$ in $L^\infty_{\text{loc}}$ for junction
  conditions of optimal-control type at least if the flux is ``strictly limited''. 
\end{abstract}

\paragraph{Keywords:} Hamilton-Jacobi equations, junction conditions,
viscosity solutions, flux-limited solutions, error estimates, reduced minimal action.

\paragraph{MSC:} 65M06, 65M12, 49L25.

\setcounter{tocdepth}{1}
\tableofcontents

\section{Introduction}

This paper is concerned with numerical approximation of first order
Hamilton-Jacobi equations posed on a junction, that is to say a
network made of one node and a finite number of edges. 

The theory of viscosity solutions for such equations on such
domains has reached maturity by now \cite{schieborn,CS,ACCT,IMZ,IM}.
In particular, it is now understood that general junction conditions
reduce to special ones of optimal-control type \cite{IM}. Roughly
speaking, it is proved in \cite{IM} that imposing a junction condition
ensuring the existence of a continuous viscosity solution and a
comparison principle is equivalent to imposing a junction condition
obtained by ``limiting the flux'' at the junction point.

For the ``minimal''\emph{flux-limited} junction conditions,
Costeseque, Lebacque and Monneau \cite{CLM} introduced a monotone
numerical scheme and proved its convergence. Their scheme can be
naturally extended to general junction conditions and our first
contribution is to introduce it and to prove its convergence.

Our second and main result is an error estimate à la Crandall-Lions
\cite{CL} in the case of flux-limited junction conditions. It is
explained in \cite{CL} that the proof of the comparison principle
between sub- and super-solutions of the continuous Hamilton-Jacobi
equation can be adapted in order to derive error estimates between the
numerical solution associated with monotone (stable and consistent)
schemes and the continuous solution.  In the Euclidian case, the
comparison principle is proved thanks to the technique of doubling
variables; it relies on the classical penalisation term $\eps^{-1}
|x-y|^2$. Such a penalisation procedure is known to fail in general if
the equation is posed in a junction; it is explained in \cite{IM}
that it has to be replaced with a \emph{vertex test function}. Here we replace it by the \emph{reduced minimal action} introduced in \cite{IMZ} for the ``minimal'' \emph{flux-limited} junction conditions. We study and use it in the case where the flux is ``strictly'' limited. 

In order to derive error estimates as in \cite{CL}, it is
important to study the regularity of the test function. More
precisely, we prove (Proposition~\ref{prop:c11}) that its gradient is locally Lipschitz
continuous, at least if the flux is ``strictly limited'' and far away from a special curve. But we also see that the reduced minimal action is not of class $C^{1}$ on this curve. However we can get ``weaker'' viscosity inequalities thanks to a result in \cite{IM} (see Proposition \ref{propimp}). Such a
regularity result is of independent interest.

\subsection{Hamilton-Jacobi equations posed on junctions} 

A \textit{junction} is a network made of one node and a finite number
of infinite edges. It can be viewed as the set of $N$ distinct copies
$(N \ge 1)$ of the half-line which are glued at the origin.  For
$\alpha=1,\dots,N,$ each branch $J_{\alpha}$ is assumed to be isometric
to $[0,+\infty)$ and
\[J=\bigcup_{\alpha=1,\ldots,N} J_\alpha \quad \text{ with } \quad J_{\alpha}\cap J_{\beta} =\{0\} 
\quad \text{for} \quad \alpha \neq \beta\]
where the origin $0$ is called the \textit {junction point}. 
For points $x,y\in J$, $d(x,y)$ denotes the geodesic distance on $J$ defined as
\[d(x,y)=\left\lbrace
\begin{array}{ll}
\abs{x-y} & \text{ if } x,y  \text{ belong to the same branch},\\
\abs{x}+\abs{y} &\text{ if } x,y \text{ belong to different branches.}
\end{array}\label{zizou}
\right.\] 
With such a notation in hand, we consider the following
Hamilton-Jacobi equation posed on the junction $J$,
\begin{equation}\label{eq:main}
\left\lbrace
\begin{array}{ll}
  u_t+H_{\alpha}(u_x)=0 & \text{ in } (0,T)\times J_\alpha \setminus \{0\}, \\
  u_t +F(\frac{\partial u}{\partial x_{1}},\dots,\frac{\partial u}{\partial x_{N}})=0 
  & \text{ in } (0,T)\times \{0\}, 
\end{array}
\right.
\end{equation}
submitted to the initial condition
\begin{equation}
\label{eq:ic}
u(0,x)=u_0(x), \quad \text{ for }  x\in J
\end{equation} 
where $u_0$ is globally Lipschitz in $J$. The second equation in
\eqref{eq:main} is referred to as the \textit{junction condition}. We
consider the important case of quasi-convex Hamiltonians $H_{\alpha}$ satisfying
the following conditions:
\begin{equation}
\label{H}
\text{There exists $p_0^\alpha \in \rr$ such that }
\left\lbrace \begin{array}{ll}
 H_{\alpha}\in C^2 (\rr) \text{ and } H''_\alpha (p_0^\alpha) >0   \\
 \pm H'_\alpha (p) > 0 \text{ for } \pm (p-p_0^\alpha) > 0 \\
 \lim_{\abs{p}\to +\infty}H_{\alpha}(p)=+\infty .
\end{array}
\right.
\end{equation}
In particular
$H_{\alpha}$ is non-increasing in $(-\infty,p_0^{\alpha}]$ and
non-decreasing in $[p_0^{\alpha},+\infty)$, and we set
\[H_{\alpha}^-(p)=
\begin{cases}
H_{\alpha}(p) & \text{ for } p \le p_0^{\alpha}\\ 
H_{\alpha}(p_0^{\alpha}) & \text{ for }  p\ge p_0^{\alpha}
\end{cases}
\quad \text{ and } \quad
H_{\alpha}^+(p)=
\begin{cases}
H_{\alpha}(p_0^{\alpha}) & \text{ for } p \le p_0^{\alpha},\\ 
H_{\alpha}(p) & \text{ for } p\ge p_0^{\alpha}
\end{cases}\]
where $H_{\alpha}^-$ is non-increasing and $H_{\alpha}^+$ is non-decreasing.

We next introduce a one-parameter family of junction conditions: given
a \emph{flux limiter} $A\in\rr\cup\{-\infty\},$ the A-limited flux
junction function  is defined for $p=(p_1,\dots,p_N)$ as,
\begin{equation}
\label{fa}
F_{A}(p)=\max\bigg(A,\max_{\substack{\alpha =1,\dots,N}}
H_{\alpha}^{-}(p_{\alpha})\bigg)
\end{equation}
for some given $A \in \rr \bigcup \{-\infty\}$ where $H_{\alpha}^{-}$
is non-increasing part of $H_{\alpha}$.

We now consider the following important special case of \eqref{eq:main},
\begin{equation}
\label{eq:continuous}
\left\lbrace
\begin{array}{ll}
  u_t+H_{\alpha}(u_x)=0 & \text{ in } (0,T)\times J_\alpha \setminus \{0\}, \\
  u_t +F_A(\frac{\partial u}{\partial x_{1}},\dots,\frac{\partial u}{\partial x_{N}})=0 & \text{ in } (0,T)\times \{0\}.
\end{array}
\right.
\end{equation}
We point out that all the junction functions $F_A$ associated with $A\in [-\infty,A_0]$ coincide if one chooses 
\begin{equation}
\label{a0}
A_0= \max_{\substack{\alpha=1,\dots,N}}\min_{\substack{\rr}} H_{\alpha}.
\end{equation}
As far as general junction conditions are concerned, we assume that
the junction function $F:\rr^{n}\mapsto \rr$ satisfies
 \begin{equation}
\label{F}
\left\lbrace\begin{array}{l} F\text{ is continuous and piecewise } C^{1}(\rr^n),\\
                \forall \alpha, \forall p=(p_1,\dots,p_N) \in \mathbb{R}^{N}, \frac{\partial F}{\partial p_{\alpha}}(p)<0, \\
               F (p_1,\dots,p_N) \to + \infty \text{ as } \displaystyle \min_{i\in \{1,\dots,N\}} p_i \to -\infty.
 \end{array}\right.
\end{equation}   

\paragraph{\textbf{Hypothesis in the following of the paper: $p_{0}^{\alpha}=0$.}}
Without loss of generality (see \cite[Lemma 3.1]{IM}), we consider in this paper that $p_{0}^{\alpha}=0$ for $\alpha=1,...,N$, i.e.,  
\begin{equation}
\label{minH}
\min H_{\alpha}=H_{\alpha}(0).
\end{equation}
 Indeed, $u$ solves \eqref{eq:continuous} if and only if
$\tilde{u}(t, x) := u(t, x)-p_{0}^{\alpha}x$ for $x\in J_{\alpha}$ solves the same equation in which $H_{\alpha}$ is replaced by $\tilde{H}_{\alpha}(p)=H_{\alpha}(p+p_{0}^{\alpha})$. We have the same result for $u^{h}$ the solution of the scheme \eqref{scheme}.

\paragraph{The optimal control framework.} It is well known that the Legendre-Fenchel conjugate is crucial in establishing a link between the general Cauchy problem \eqref{eq:continuous}-\eqref{eq:ic} and a control problem \cite{Lions}. Through this link, we obtain the representation formula for the exact solution.
Before treating the case where the Hamiltonians $H_{\alpha}$ satisfy \eqref{H}, we first consider the case of Hamiltonians satisfying the hypotheses of \cite{IMZ} i.e.,
\begin{equation}
\label{Jconvex1}
\left\lbrace \begin{array}{ll}
\text{(\textbf{Regularity})} & H_{\alpha}\text{ is of class 
} C^2   \\
\text{(\textbf{Coercivity})} & \lim_{\abs{p}\to +\infty}H_{\alpha}(p)=+\infty \\
\text{(\textbf{Convexity})} &H_\alpha \text{ is convex  and is the Legendre Fenchel transform of } L_{\alpha} \\
 &\text{ where } L_{\alpha} \text{ is of class } C^{2} \text{ and satisfies (B0)}.
\end{array}
\right.
\end{equation}
 
We recall that 
\begin{equation}
\label{J12}
H_{\alpha}(p)=L_{\alpha}^{\star}(p)=\sup_{q\in \mathbb{R}} (pq-L_{\alpha}(q)).
\end{equation}
We consider the following hypothesis for $L_{\alpha}$,
\begin{center}
\textbf{(B0)} There exists a constant $\gamma>0$ such that for all $\alpha=1,\cdots,N,$ the $C^2(\mathbb{R})$ functions $L_{\alpha}$ satisfy $L_{\alpha}^{\prime \prime}\ge \gamma>0.$ 
\end{center}
 
An optimal control interpretation of the Hamilton-Jacobi equation \eqref{eq:continuous} is given in \cite{B,bar-dol,Lions,collegelions}. We define the set of admissible controls at a point $x\in J$ by
\[
\mathcal{U}(x)=\left\lbrace
\begin{array}{ll}
\mathbb{R}e_{\alpha_{0}} &\text{ if } x\in J_{\alpha_{0}}^{\star},\\
\cup_{\alpha=1,\cdots N}\mathbb{R}^{+} e_{\alpha} &\text{ if } x=0.
\end{array}
\right.
\] 
For $(s,y), (t,x)\in [0,T]\times J$ with $s\le t,$ we define the set of admissible trajectories from $(s,y)$ to $(t,x)$ by
\begin{equation}
\label{J18}
\mathcal{A}(s ,y;t,x)=\left\{
X\in W^{1,1}([s,t], \mathbb{R}^2): \quad \vrule 
\begin{array}{ll}
X(\tau)\in J & \text{ for all } \tau\in (s,t)\\
\dot{X} (\tau) \in \mathcal{U}(X(\tau)) & \text{ for a.e } \tau\in (s,t)\\
X(s)=y & \text{ and } X(t)=x
\end{array}
\right\}.
\end{equation}
For $P=pe_i\in \mathcal{U}(x)$ with $p\in \mathbb{R},$ we define the Lagrangian on the junction
\begin{equation}
\label{J17}
L(x,p)=\left\lbrace\begin{array}{ll}
L_{\alpha}(p) & \text{ if } x\in J_{\alpha}^{\star},\\
L_{A}(p) & \text{ if } x=0,
\end{array}
\right.
\end{equation}

with
\begin{equation*}
L_A(p)= \min\bigg(-A,\min_{\substack{\alpha =1,\dots,N}}
L_{\alpha}(p)\bigg).
\end{equation*}

The Hopf-Lax representation formula of the solution of \eqref{eq:continuous}-\eqref{eq:ic} is given in \cite{IMZ,ad-mi-go} by

\begin{equation}
\label{J19}
u_{oc}(t,x)= \inf_{\substack{{y\in J}}}\{u_0(y)+\mathcal{D}(0,y;t,x)\} 
\end{equation}
with 
\[
\mathcal{D}(0,y;t,x)=\inf_{\substack{{X\in \mathcal{A}(0,y;t,x)}}}\bigg\{\int_0^t L(X(\tau),\dot{X}(\tau))d\tau\bigg\}.
\]

\subsection{Presentation of the scheme}

The domain $(0,+\infty) \times J$ is discretized with respect to time
and space. We choose a regular grid in order to simplify the
presentation but it is clear that more general meshes could be used
here. The space step is denoted by $\Delta x$ and the time step by
$\Delta t$. If $h$ denotes $(\Delta t,\Delta x)$, the mesh (or
grid) $\mathcal{G}_h$ is chosen as
\[\mathcal{G}_h = \{n \Delta t : n \in \nn \} \times J^{\Delta x}\] 
where 
\[ J^{\Delta x} = \bigcup_{\alpha=1,\dots,N} J_\alpha^{\Delta x} \quad \text{ with }
\quad J_\alpha \supset J_\alpha^{\Delta x} \simeq \{ i \Delta x : i \in \nn \}.\]
It is convenient to write $x_i^\alpha$ for $i\Delta x \in J_\alpha$. 

A numerical approximation $u^h$ of the solution $u$ of the
Hamilton-Jacobi equation is defined in $\mathcal{G}_h$; the quantity
$u^h (n \Delta t, x_i^\alpha)$ is simply denoted by $U_i^{\alpha,
  n}$. We want it to be an approximation of $u(n\Delta t, x_i^\alpha)$
for $n \in \nn$, $i\in \nn$, where $\alpha$ stands for the index of
the  branch.

We consider the following time-explicit scheme:  for $n\ge 0$,
\begin{equation}
\label{scheme}
\left\lbrace
\begin{array}{lll}
\frac{U^{\alpha,n+1}_i-U^{\alpha,n}_i}{\Delta t}
+\max\{H_{\alpha}^{+}(p_{i,-}^{\alpha,n}),H_{\alpha}^{-}(p_{i,+}^{\alpha,n})\}=0, & i\ge 1, &
\alpha=1,\dots,N\\ 
U_0^{\beta,n}:=U_0^n, &  i=0, & \beta=1,\dots,N\\
\frac{U_0^{n+1}-U_0^{n}}{\Delta t}+F(p_{0,+}^{1,n},\dots,p_{0,+}^{N,n})=0, &  &
\end{array}
\right.
\end{equation}
where $p_{i,\pm}^{\alpha,n}$ are the discrete (space) gradients defined by
\begin{equation}
\label{dsg}
p_{i,+}^{\alpha,n}:= \frac{U_{i+1}^{\alpha,n}-{U_{i}^{\alpha,n}}}{\Delta x}, \qquad
p_{i,-}^{\alpha,n}:= \frac{U_{i}^{\alpha,n}-{U_{i-1}^{\alpha,n}}}{\Delta x}
\end{equation}
with the initial condition
\begin{equation}
\label{eq:cid}
U_i^{\alpha,0}=u_0(x_i^\alpha), \quad i\ge 0,\quad \alpha=1,\dots,N.
\end{equation}

The following Courant-Friedrichs-Lewy (CFL) condition ensures that the explicit scheme is monotone,
\begin{equation}
\label{cfl}
\frac{\Delta x}{\Delta t}\ge \max\bigg\{  
\max_{\substack{{i\ge0},\\{\alpha=1,\dots,N,}\\{0\le n\le n_T}}}{\abs{H_{\alpha}^{'}(p_{i,+}^{\alpha,n})}}; 
 \max_{\substack{0\le n\le n_T}}\bigg\{(-\nabla \cdot F)(p_{0,+}^{1,n},\dots,p_{0,+}^{N,n})\bigg\}\bigg\}
\end{equation}
where the integer $n_T$ is the integer part of
$\frac{T}{\Delta t}$ for a given $T>0$.

\subsection{Main results}

As previously noticed in \cite{CLM} in the special case $F=F_{A_0}$,
it is not clear that the time step $\Delta t$ and space
  step $\Delta x$ can be chosen in such a way that the CFL
  condition~\eqref{cfl} holds true since the discrete gradients
  $p_{i,+}^{\alpha,n}$ depend itself on $\Delta t$ and $\Delta x$
  (through the numerical scheme). We thus impose a more stringent CFL
  condition,
\begin{equation}
\label{cflr}
\frac{\Delta x}{\Delta t}\ge \max\bigg\{\max_{\substack{{\alpha=1,\dots,N},\\ 
{\underline{p}_{\alpha}\le p\le \overline{p}_\alpha}}}{\abs{H_{\alpha}'(p)}};  
\max_{\substack{{\alpha=1,\dots,N},\\ \underline{p}_{\alpha}^0\le p_{\alpha}\le \overline{p}_\alpha}}
\bigg\{(-\nabla \cdot F)(p_{1},\dots,p_{N})\bigg\}\bigg\}
\end{equation}
for some
$\underline{p}_\alpha,\overline{p}_\alpha,\underline{p}_\alpha^0 \in
\rr$
to be fixed (only depending on $u_0$, $H$, and $F$).
We can argue as in \cite{CLM} and prove that
$\underline{p}_\alpha,\overline{p}_\alpha,\underline{p}_\alpha^0 \in
\rr$
can be chosen in such a way  that the CFL condition~\eqref{cflr} implies \eqref{cfl}
and, in turn, the scheme is monotone (Lemma~\ref{lem:monotone} in
Section~\ref{sec:conv}). We will also see that it is stable
(Lemma~\ref{lem:stab}) and consistent (Lemma~\ref{lem:cons}). It is
thus known that it converges \cite{CL,BS}.
Notice that taking $F=F_{A}$, gives the following CFL condition
\begin{equation}
\label{cflr2}
\frac{\Delta x}{\Delta t}\ge \max_{\substack{{\alpha=1,\dots,N},\\
{\underline{p}_{\alpha}\le p\le \overline{p}_\alpha}}}{\abs{H_{\alpha}'(p)}}.
\end{equation}
\begin{theorem}[\textbf{Convergence  for general junction conditions}]\label{Conv-gen}
  Let $T>0$ and $u_0$ be Lipschitz continuous. There exist
  $\underline{p}_\alpha,\overline{p}_\alpha,\underline{p}_\alpha^0 \in
  \rr$, $\alpha =1,\dots, N$, depending only on the initial data, the
  Hamiltonians and the junction function $F$, such that, if $h$
  satisfies the CFL condition \eqref{cflr}, then the numerical
  solution $u^h$ defined by \eqref{scheme}-\eqref{eq:cid} converges
  locally uniformly as $h$ goes to zero to the unique relaxed viscosity solution $u$ of \eqref{eq:main}-\eqref{eq:ic},
  on any compact set $\mathcal{K} \subset [0, T )\times J$, i.e.
\begin{equation}
\label{convergence}
\limsup_{|h| \to 0} \sup_{\substack{(t,x)\in \mathcal{K} \cap \mathcal{G}_h}}\abs{u^h (t,x)-u(t,x)}=0.
\end{equation}
\end{theorem}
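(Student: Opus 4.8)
The plan is to follow the Barles--Souganidis framework \cite{BS}: for a scheme that is monotone, stable and consistent, the upper and lower half-relaxed limits of the numerical solution are respectively a sub- and a super-solution of the continuous problem, and the comparison principle then forces them to coincide with the unique weak solution, which is exactly equivalent to locally uniform convergence. These three structural properties are precisely Lemma~\ref{lem:monotone}, Lemma~\ref{lem:stab} and Lemma~\ref{lem:cons}, so the proof reduces to assembling them correctly and, crucially, to controlling the behaviour at the junction point $x=0$.

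First I would fix the constants $\underline{p}_\alpha,\overline{p}_\alpha,\underline{p}_\alpha^0$. Arguing as in \cite{CLM}, I would derive $\eps$-uniform discrete gradient estimates: using the monotonicity of the scheme one propagates in time a priori bounds on the discrete space gradients $p_{i,\pm}^{\alpha,n}$ inherited from the Lipschitz constant of $u_0$, so that they remain trapped in fixed intervals $[\underline{p}_\alpha,\overline{p}_\alpha]$ (and analogously for $\underline{p}_\alpha^0$ at the junction) independently of $\eps$. These bounds serve two purposes at once: they guarantee that the checkable CFL condition \eqref{cflr} implies the true CFL condition \eqref{cfl}, hence monotonicity, and they furnish a uniform-in-$\eps$ Lipschitz bound in space. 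A companion bound on the discrete time increment $(U_i^{\alpha,n+1}-U_i^{\alpha,n})/\Delta t$, obtained from the scheme together with the gradient bounds, gives Lipschitz regularity in time; together these yield the equicontinuity underlying stability.

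Next I would introduce the half-relaxed limits
\begin{equation*}
\bar u(t,x)=\limsup_{\substack{|\eps|\to0\\ (s,y)\to(t,x)}} u^\eps(s,y),
\qquad
\underline u(t,x)=\liminf_{\substack{|\eps|\to0\\ (s,y)\to(t,x)}} u^\eps(s,y).
\end{equation*}
The uniform Lipschitz bounds ensure that both are finite, locally Lipschitz, and that $\underline u\le\bar u$ everywhere. Using monotonicity together with the consistency estimates of Lemma~\ref{lem:cons}, the standard argument shows that $\bar u$ is a relaxed viscosity subsolution and $\underline u$ a relaxed viscosity supersolution of \eqref{eq:main}. On each open branch $J_\alpha\setminus\{0\}$ this is the classical Euclidean computation; the delicate point is the junction point, where one must test against functions admissible on the junction and pass to the limit in the discrete junction equation (the third line of \eqref{scheme}), invoking the consistency and the strict monotonicity of $F$ in each slot required by \eqref{F}. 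I would also check that both relaxed limits agree with $u_0$ at $t=0$, which again follows from the local Lipschitz bounds and the initialization \eqref{eq:cid}.

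Finally, since $\bar u$ is a subsolution and $\underline u$ a supersolution with $\bar u(0,\cdot)\le u_0\le\underline u(0,\cdot)$, the comparison principle for relaxed viscosity solutions of the junction problem \cite{IM} yields $\bar u\le\underline u$. Combined with $\underline u\le\bar u$ this gives $\bar u=\underline u=:u$, which is therefore the unique weak solution; the equality of the two half-relaxed limits is exactly equivalent to the locally uniform convergence \eqref{convergence}. The main obstacle I anticipate is twofold: establishing the $\eps$-uniform discrete gradient bounds that close the CFL/monotonicity loop, since the gradients depend on $\eps$ through the scheme itself, and verifying the relaxed sub-/super-solution property precisely at the junction point, where the Euclidean test-function machinery must be replaced by the junction-adapted notion of solution from \cite{IM}.
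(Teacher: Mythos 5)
Your proposal is correct and follows essentially the same route as the paper: monotonicity, stability and consistency (Lemmas~\ref{lem:monotone}, \ref{lem:stab}, \ref{lem:cons}) feed the Barles--Souganidis half-relaxed-limit argument, and the comparison principle closes the proof, with the uniform discrete gradient bounds of Theorem~\ref{Theo1} fixing $\underline{p}_\alpha,\overline{p}_\alpha,\underline{p}_\alpha^0$ and validating the CFL condition. The only step you compress is that the comparison principle (Theorem~\ref{Theo4}) is stated for flux-limited junction conditions, so the paper first invokes Theorem~\ref{Theo5} to convert the relaxed sub-/super-solutions of \eqref{eq:main} into genuine sub-/super-solutions of \eqref{eq:continuous} with $A=A_F$ before comparing them.
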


\begin{remark}
We know from \cite{IM} that the equation \eqref{eq:main}-\eqref{eq:ic} may have no viscosity solution but always a unique relaxed viscosity solution.
Notice that the scheme has a junction condition which is not relaxed. However the solution of the scheme converges to the unique relaxed solution of the associated Hamilton-Jacobi equation. 
\end{remark}
The main result of this paper lies in getting error estimates in the
case of flux-limited junction conditions.
\begin{theorem}[\textbf{Error estimates for flux-limited junction conditions}]
\label{thm:ee}
Let $T>0$ and $u_0$ be Lipschitz continuous, $u^h$ be the solution of the
associated numerical scheme \eqref{scheme}-\eqref{eq:cid} and $u$ be
the viscosity solution of \eqref{eq:continuous}-\eqref{eq:ic}
for some $A \in \rr$. If the CFL condition \eqref{cflr2} is satisfied,
then there exists $C>0$ (independent of $h$) such that
\begin{equation}\label{eq:ee}
 \sup_{\substack{[0,T)\times J \cap \mathcal{G}_h}}\abs{u^h(t,x)-u(t,x)}\le 
\begin{cases}
C(\Delta x)^{1/2} & \text{ if } A > A_0, \\
C (\Delta x)^{2/5} & \text{ if } A = A_0.
\end{cases}
\end{equation}
\end{theorem}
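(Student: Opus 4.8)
The plan is to adapt the Crandall-Lions technique for error estimates to the junction setting, following the roadmap sketched in the introduction. The classical approach for error estimates between a monotone scheme and a viscosity solution doubles the time and space variables and penalises their difference; the main new ingredient here is that the Euclidean penalisation $\eps^{-1}|x-y|^2$ must be replaced by the vertex test function $G$ whose existence and $C^{1,1}$-regularity is guaranteed by Proposition~\ref{prop:c11} (valid for strictly limited flux, which is why we must work with $F_A$ for $A \in \rr$). Concretely, I would introduce the auxiliary function
\[
\Phi(t,x,s,y) = u^\eps(t,x) - u(s,y) - \frac{(t-s)^2}{2\eta} - \mu\, G_\sigma(x,y) - \text{(localisation terms)}
\]
where $G_\sigma$ is the (rescaled) vertex test function at scale $\sigma$, $\eta$ governs the time-doubling, and the localisation terms confine the maximum to a compact set in $[0,T)\times J$. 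Because $u^\eps$ lives only on the grid $\mathcal G_\eps$ and $u$ is continuous, the doubling is genuinely asymmetric: $u^\eps$ is handled through the numerical scheme at a grid maximum point, while $u$ is handled through its viscosity sub/super-solution inequalities.

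First I would establish the preliminary Lipschitz and time-regularity bounds on $u^\eps$ (gradient bounds from the stability Lemma~\ref{lem:stab} and a discrete time-continuity estimate $|U^{\alpha,n+1}_i - U^{\alpha,n}_i| \le C\Delta t$), together with the known Lipschitz bounds on $u$; these are what make the localisation and the penalisation parameters tunable. Next I would locate a maximum point $(\bar t, \bar x, \bar s, \bar y)$ of $\Phi$ over $(\mathcal G_\eps \cap [0,T]\times J) \times ([0,T]\times J)$, and use the penalisation to show $(\bar t,\bar x)$ and $(\bar s,\bar y)$ are close: the key quantitative estimates are $|\bar t - \bar s| \lesssim \eta$ and $d(\bar x,\bar y) \lesssim \sigma$, plus a bound on $\mu\,G_\sigma(\bar x,\bar y)$. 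The crux is to exploit the maximality of $\Phi$ in the grid variables to generate, via the scheme \eqref{scheme} and its consistency/monotonicity, a discrete inequality for $u^\eps$ at $(\bar t,\bar x)$; combined with the viscosity inequality for $u$ at $(\bar s,\bar y)$ and the gradient-matching property $\nabla_x G_\sigma(\bar x,\bar y) \approx -\nabla_y G_\sigma(\bar x,\bar y)$ inherited from the vertex test function, these two inequalities are subtracted to close the argument.

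The main obstacle — and the place where the $C^{1,1}$ regularity of $G$ is essential — is controlling the consistency error of the scheme when the maximum point $\bar x$ sits at or near the junction point $0$, since there the relevant gradient is the $N$-tuple $(p^1,\dots,p^N)$ and the junction equation with $F_A$ is active. Away from the junction, on each branch, the scheme is a standard monotone Hamilton-Jacobi scheme and the consistency error is $O(\Delta x)$ in the discrete gradients; but to convert this into a usable bound one must control the second-order behaviour of the test function, which is exactly where the local Lipschitz continuity of $\nabla G$ (hence a bounded distributional Hessian) enters to absorb the quadratic Taylor remainder. The scaling of the vertex test function at scale $\sigma$ produces Hessian bounds of order $\sigma^{-1}$, so the consistency contribution is roughly $\Delta x \cdot \sigma^{-1}$; balancing this against the time penalisation $\eta$, the spatial scale $\sigma$, and the gradient mismatch coming from doubling yields an error of the form $\Delta x/\sigma + \sigma + \eta + \Delta t/\eta$. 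Optimising the free parameters $\eta,\sigma$ against $\Delta x$ (with $\Delta t \sim \Delta x$ through the CFL condition \eqref{cflr}) produces the exponent $1/3$: choosing $\sigma \sim (\Delta x)^{2/3}$ and $\eta \sim (\Delta x)^{2/3}$ balances $\Delta x/\sigma \sim (\Delta x)^{1/3}$ against $\sigma \sim (\Delta x)^{2/3}$ and $\Delta t/\eta \sim (\Delta x)^{1/3}$, giving the claimed $C(\Delta x)^{1/3}$ bound in \eqref{eq:ee}. A symmetric argument exchanging the roles of $u^\eps$ and $u$ (doubling so that the scheme controls the super-solution side) yields the reverse inequality, and combining both gives the two-sided $L^\infty$ estimate.
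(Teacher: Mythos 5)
Your overall strategy is the paper's: a Crandall--Lions doubling in time and space, with the Euclidean penalisation replaced by a rescaled vertex test function, discrete viscosity inequalities for $u^\eps$ via Lemma~\ref{lem:visc-ineq}, continuous ones for $u$, and the $C^{1,1}$ bound of Proposition~\ref{prop:c11} absorbing the consistency (Taylor) remainder. The paper organises the comparison slightly differently (it adds a term $-\sigma s$ and shows that for $\sigma$ above an explicit threshold $\sigma^\star$ the maximum must sit on $\{t=0\}\cup\{s=0\}$, then estimates the supremum there using Lemma~\ref{Lem3}), and it first reduces to the normalisation \eqref{H:add} by subtracting $p_0^\alpha x$ on each branch so that Theorem~\ref{thm:vertex} applies; these are details you could fill in.

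However, there is a genuine gap in your quantitative bookkeeping, and it sits exactly where the exponent $1/3$ is supposed to come from. You use a \emph{single} spatial parameter $\sigma$ and arrive at an error of the form $\Delta x/\sigma+\sigma+\eta+\Delta t/\eta$; but the minimum of $\Delta x/\sigma+\sigma$ is attained at $\sigma\sim(\Delta x)^{1/2}$ and equals $O((\Delta x)^{1/2})$, so your claimed ``balance'' at $\sigma\sim(\Delta x)^{2/3}$ (where $\Delta x/\sigma\sim(\Delta x)^{1/3}$ while $\sigma\sim(\Delta x)^{2/3}$) is not a balance at all --- your functional, taken at face value, would prove a \emph{better} rate than $1/3$, which signals that a cost has been dropped. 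The missing cost is the second small parameter $\gamma$ attached to the vertex test function itself: $G$ is only $\gamma$-compatible (items (iii)--(iv) of Theorem~\ref{thm:vertex}, plus the shift of the flux limiter to $A_\gamma=\max(A,A_0+\gamma)$ with $|F_A-F_{A_\gamma}|\le\gamma$), which injects an additive $O(\gamma)$ into the subtracted viscosity inequalities, while Proposition~\ref{prop:c11} only gives $\|D^2G\|\le C/\gamma$, so the Hessian of the rescaled penalisation $\eps\,G(x/\eps,y/\eps)$ is $O\bigl(1/(\eps\gamma)\bigr)$ and the consistency remainder is $O\bigl(\Delta x/(\eps\gamma)\bigr)$. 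The correct functional is therefore
\[
\frac{\Delta x}{\eps\gamma}+\gamma+\eps+\nu+\frac{\Delta t}{\nu},
\]
whose minimisation ($\eps\sim\gamma\sim(\Delta x)^{1/3}$, $\nu\sim(\Delta t)^{1/2}$) genuinely produces $(\Delta x)^{1/3}$; one cannot send $\gamma\to 0$ at fixed $\eps$ because the Hessian bound degenerates. Without tracking $\gamma$ separately from the doubling scale, the argument does not close with the stated rate.
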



\subsection{Related results}

\paragraph{Numerical schemes for Hamilton-Jacobi equations on networks.}
The discretization of viscosity solutions of Hamilton-Jacobi
equations posed on networks has been studied in a few papers
only. Apart from \cite{CLM} mentioned above, we are only aware of two
other works. A convergent semi-Lagrangian scheme is introduced in
\cite{cfs} for equations of eikonal type.  In \cite{gzh}, an adapted
Lax-Friedrichs scheme is used to solve a traffic model; it is worth
mentioning that this discretization implies to pass from the scalar
conservation law to the associated Hamilton-Jacobi equation at each
time step.

For optimal control problems, the numerical approximation of (HJ) has already been studied using schemes
based on the discrete dynamic programming principle. Essentially, these schemes are
built by replacing the continuous optimal control problem by its discrete time version. We refer to Capuzzo Dolcetta \cite{C}, Capuzzo Dolcetta-Ishii
\cite{CDI} for the results concerning the convergence of $u_h$ to $u$ and the a priori estimates (of order $\Delta x)$ ,
in the $L^{\infty},$ giving the order of convergence of the discrete-time approximation.
We refer to Falcone \cite{F} for the results related to the order of convergence of the
fully discrete (i.e. in space and time) approximation and for the construction of the algorithm, we mention that under a semiconcavity assumption the rate of convergence is of order 1. We cite also  \cite{Fal-fer} and references therein for discrete time high order schemes for Hamilton Jacobi Bellman equations.

\paragraph{Link with monotone schemes for scalar conservation laws.} 
We first follow \cite{CLM} by emphasizing that the convergence result,
Theorem~\ref{Conv-gen}, implies the convergence of a monotone scheme
for scalar conservation laws (in the sense of distributions).

In order to introduce the scheme, it is useful to introduce a notation
for the numerical Hamiltonian $\mathcal{H}_\alpha$, 
\[ \mathcal{H}_\alpha (p^+,p^-) = \max\{H_{\alpha}^{-}(p^+),H_{\alpha}^{+}(p^-)\}. \]
The discrete solution  $(V^n)$ of the scalar conservation law is defined as
follows,
\[V ^{\alpha,n}_{i+\frac12}= 
\begin{cases} 
\frac{U^{\alpha,n}_{i+1}-U^{\alpha,n}_i}{\Delta x} & \text{ if } i \ge 1 \medskip\\
\frac{U^{\alpha,n}_1 -U^n_0}{\Delta x} & \text{ if } i = 0.
\end{cases}\]
In view of \eqref{scheme}, it satisfies for all $\alpha=1,\dots,N$, 
\begin{equation*}
\left\lbrace
\begin{array}{lll}
\frac{V^{\alpha,n+1}_{i+\frac12}-V^{\alpha,n}_{i+\frac12}}{\Delta t} 
+ (\Delta x)^{-1} \left( \mathcal{H}_\alpha (V^{\alpha,n}_{i+\frac32},V^{\alpha,n}_{i+\frac12}) - 
\mathcal{H}_\alpha (V^{\alpha,n}_{i+\frac12},V^{\alpha,n}_{i-\frac12}) \right) 
=0,  &   i\ge 1, \medskip \\ 
\frac{V_{\frac12}^{\alpha,n+1}-V_{\frac12}^{\alpha,n}}{\Delta t}
+ (\Delta x)^{-1} \left(\mathcal{H}_\alpha (V^{\alpha,n}_{\frac32},V^{\alpha,n}_{\frac12})  
- F(V_{\frac12}^{1,n},\dots,V_{\frac12}^{N,n}) \right)=0. &
\end{array}
\right.
\end{equation*}
submitted to the initial condition
\begin{equation*}
V_{i+\frac12}^{\alpha,0}=\frac{u_0(x_i^\alpha)-u_0(0)}{\Delta x}, \quad i\ge 0,\quad \alpha=1,\dots,N.
\end{equation*}
In view of Theorem~\ref{Conv-gen}, we thus can conclude that the
discrete solution $v^h$ constructed from $(V^n)$ converges towards
$u_x$ in the sense of distributions, at least far from the junction point. 

\paragraph{Scalar conservation laws with Dirichlet boundary conditions
  and constrained fluxes.}

We would like next to explain why our result can be seen as the
Hamilton-Jacobi counterpart of the error estimates obtained by
Ohlberger and Vovelle \cite{ov} for scalar conservation laws submitted
to Dirichlet boundary conditions.

On the one hand, it is known since 1979 and Bardos, Le
  Roux and Nedelec \cite{bln} that Dirichlet boundary conditions
imposed to scalar conservation laws should be understood in a
generalized sense. This can be seen by studying the parabolic
regularization of the problem. A boundary layer analysis can be
performed for systems if the solution of the conservation law is
smooth; see for instance \cite{gs,gues}. Depending on the fact that
the boundary is characteristic or not, the error is $h^{\frac12}$
or $h$.  In the scalar case, it is proved in \cite{div} that the
error between the solution of the regularized equation with a
vanishing viscosity coefficient equal to $h$ and the entropy
solution of the conservation law (which is merely of bounded variation
in space) is of order $h^{1/3}$ (in $L^\infty_t L^1_x$ norm). In
\cite{ov}, the authors derive error estimates for finite volume
schemes associated with such boundary value problems and prove that it
is of order $(\Delta x)^{1/6}$ (in $L^1_{t,x}$ norm).  More recently,
scalar conservation laws with flux constraints were studied
\cite{cg,cgs} and some finite volume schemes were built \cite{ags}.
In \cite{canseg}, assuming that the flux is bell-shaped, that is to
say the opposite is quasi-convex, it is proved that the error between
the finite volume scheme and the entropy solution is of order
$(\Delta x)^{\frac13}$ and that it can be improved to
$(\Delta x)^{\frac12}$ under an additional condition on the traces of
the BV entropy solution. It is not known if the
  estimates from \cite{canseg} are optimal or not.

On the other hand, the derivative of a viscosity solution of a
Hamilton-Jacobi equation posed on the real line is known to coincide
with the entropy solution of the corresponding scalar conservation
law.  It is therefore reasonable to expect that the error between the
viscosity solution of the Hamilton-Jacobi equation and its
approximation is as good as the one obtained between the entropy
solution of the scalar conservation law and its approximation.

Moreover, it is explained in \cite{IMZ} that the junction conditions
of optimal-control type are related to the BLN condition mentioned
above; such a correspondance is recalled in Appendix~\ref{app:bln}.
It is therefore interesting to get an error estimate of order $(\Delta
x)^{1/2}$ for the Hamilton-Jacobi problem.

\subsection{Open problems}

Let us first mention that it is not known if the error estimate
between the (entropy) solution of the scalar conservation law with
Dirichlet boundary condition and the solution of the parabolic
approximation \cite{div} or with the numerical scheme \cite{ov} is
optimal or not.
Here, we prove an optimal error estimate for $A>A_{0}$ but we do not know if our error estimate is optimal or not for $A=A_{0}$.

Deriving error estimates for general junction conditions seems
difficult to us. The main difficulty is the singular geometry of the
domain. The test function, used in deducing the error estimates
with flux limited solutions, is designed to compare flux limited
solutions. Consequently, when applying the reasoning of
Section~\ref{sec:ee}, the discrete viscosity inequality cannot be
combined with the continuous one. We expect that a layer develops
between the continuous solution and the discrete scheme at the
junction point. 

\paragraph{Organization of the article.} The remaining of the paper is
organized as follows. In Section~\ref{sec:prelim}, we recall
definitions and results from \cite{IM} about viscosity solutions for
\eqref{eq:main}-\eqref{eq:ic}.
Section~\ref{sec:grad} is dedicated to the derivation of discrete
gradient estimates for the numerical scheme. In
Section~\ref{sec:conv}, the convergence result, Theorem~\ref{Conv-gen}
is proved. In Section~\ref{sec:optimaltraject}, we study the recuced the minimal action for a ``strictly'' limited flux and prove that the gradient
is locally Lipschitz continuous (at least if the flux is strictly
limited). The final section, Section~\ref{sec:ee}, is dedicated to the
proof of the error estimates. 

\section{Preliminaries}
\label{sec:prelim}

\subsection{Viscosity solutions}

We introduce the main definitions related to viscosity solutions for
Hamilton-Jacobi equations that are used in the remaining. For a more
general introduction to viscosity solutions, the reader could refer to
Barles \cite{B} and to Crandall, Ishii, Lions \cite{CIL}.

\subparagraph{Space of test functions.} For a smooth real valued
function $u$ defined on $J$, we denote by $u^{\alpha}$ the restriction
of $u$ to $(0,T)\times J_{\alpha}$.

Then we define the natural space of functions on the junction:
\[C^1(J_T)=\{u \in C(J_T) : \forall \alpha =1,\dots,N, u^\alpha \in
C^1((0,T)\times J_\alpha )\}.\]

\subparagraph{Viscosity solutions.} In order to define classical
viscosity solutions, we recall the definition of upper and lower
semi-continuous envelopes $u^{\star}$ and $u_{\star}$ of a (locally
bounded) function $u$ defined on $[0, T ) \times J$:
\[u^{\star}(t,x)=\limsup_{(s,y)\to(t,x)}u(s,y)\qquad u_{\star}(t,x)=\liminf_{(s,y)\to(t,x)}u(s,y).\]
\begin{defi}[\textbf{Viscosity solution}]
\label{Def1}
Assume that the Hamiltonians satisfy \eqref{H} and that $F$ satisfies
\eqref{F} and let $u \colon (0,T)\times J \to \rr.$

\begin{enumerate}[label=\roman*,align=CenterWithParen]
\item We say that $u$ is a \emph{sub-solution}
  (resp. \emph{super-solution}) of (\ref{eq:main}) in $(0,T)\times J$
  if for all test function $\varphi\in C^{1}(J_{T})$ such that
\[u^{\star}\le \varphi \quad (\text{resp. }  u_{\star}\ge \varphi) 
\quad \text{ in a neighborhood of } \quad (t_0,x_0)\in J_{T}\]
 with equality at $(t_0,x_0)$ for some $t_{0}>0,$ we have 
\[\varphi_{t}+H_{\alpha}(\varphi_{x})\le 0 \quad (\text{resp. } \ge 0) \quad \text{ at } (t_{0},x_{0})\in (0,T)\times J_{\alpha} \]
if $x_0 \neq 0$, else
 \[ \varphi_{t}+F_A(\frac{\partial \varphi}{\partial_{x_1}},\dots,
\frac{\partial \varphi}{\partial_{x_N}})\le 0 \quad (\text{resp. } \ge
0) \quad \text{ at } (t_{0},x_{0})=(t_0,0). \]
\item We say that $u$ is a \emph{sub-solution}
  (resp. \emph{super-solution}) of (\ref{eq:main})-(\ref{eq:ic}) on
  $[0,T)\times J$ if additionally
\[ u^{\star}(0,x)\le u_{0}(x) \quad (\text{resp. }  u_{\star}(0,x)\ge u_{0}(x)) 
\quad \text{ for all} \quad  x\in J.\]
\item We say that $u$ is a (viscosity) solution if $u$ is both a
  sub-solution and a super-solution.
 \end{enumerate}
\end{defi}
As explained in \cite{IM}, it is difficult to construct viscosity
solutions in the sense of Definition~\ref{Def1} because of the
junction condition. It is possible in the case of the flux-limited
junction conditions $F_A$. For general junction conditions, the Perron
process generates a viscosity solution in the following relaxed sense
\cite{IM}.
\begin{defi}[\textbf{Relaxed viscosity solution}]
\label{Def2}
Assume that the Hamiltonians satisfy \eqref{H} and that $F$ satisfies
\eqref{F} and let $u \colon (0,T)\times J\to\rr.$

\begin{enumerate}[label=\roman*,align=CenterWithParen]
\item We say that $u$ is a \emph{relaxed sub-solution}
  (resp. \emph{relaxed super-solution}) of (\ref{eq:main}) in
  $(0,T)\times J$ if for all test function $\varphi\in C^{1}(J_T)$
  such that
\[ u^\star \le \varphi \quad (\text{resp. }  u_{\star} \ge \varphi) 
\quad \text{ in a neighborhood of } \quad (t_0,x_0)\in J_{T} \]
 with equality at $(t_0,x_0)$ for some $t_{0}>0,$ we have 
\[\varphi_{t}+H_{\alpha}(\varphi_{x})\le 0\qquad (\text{resp.}\quad \ge 0) 
\quad \text{ at } \quad (t_{0},x_{0})\in (0,T)\times J_{\alpha}\]
if $x_{0}\neq 0$, else
\[\begin{cases}
  \text{either } \quad \varphi_{t}+F(\frac{\partial \varphi}{\partial x_1},\dots,\frac{\partial \varphi}{\partial x_N})\le 0 
  \quad (\text{resp. }  \ge 0) & \text{ at } (t_{0},x_{0})=(t_0,0)\\
  \text{or } \quad \varphi_{t}+H_{\alpha}(\frac{\partial \varphi}{\partial x_\alpha})\le
  0 \quad (\text{resp. }  \ge 0) & \text{ at } (t_{0},x_{0})=(t_0,0) \quad
  \text{ for some } \alpha.
\end{cases} \]
\item We say that $u$ is a \emph{relaxed (viscosity) solution} of
  (\ref{eq:main}) if $u$ is both a sub-solution and a super-solution.
 \end{enumerate}
\end{defi}

Let us recall some theorems in \cite{IM}.
\begin{theorem}[\textbf{Comparison principle on a
    junction}]\label{Theo4}Let $A\in\rr\cup\{-\infty\}$. Assume that
  the Hamiltonians satisfy \eqref{H} and the initial datum $u_0$ is
  uniformly continuous. Then for all sub-solution $u$ and
  super-solution $v$ of \eqref{eq:continuous}-\eqref{eq:ic}
  satisfying for some $T>0$ and $C_T>0$
  \[ u(t,x)\le C_{T}(1+d(0,x)), \quad v(t,x)\ge -C_{T}(1+d(0,x)),
  \quad \text{ for all} \quad (t,x)\in [0,T)\times J,\] we
  have 
\[u\le v \quad in\quad [0,T)\times J.\]
\end{theorem}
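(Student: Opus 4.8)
The plan is to argue by contradiction using the method of doubling of variables, with the classical quadratic penalization $\eps^{-1}|x-y|^2$ replaced by the \emph{vertex test function} of \cite{IM}. Suppose that $M := \sup_{[0,T)\times J}(u-v) > 0$. The first step consists of standard reductions. Since $u$ and $v$ have at most linear growth in $d(0,x)$, I would localize by subtracting a term of the form $\eta\langle x \rangle$, with $\langle x \rangle = \sqrt{1+d(0,x)^2}$, from the subsolution so as to confine the relevant supremum to a compact set; and I would add a time-penalization $\frac{\eta}{T-t}$ to force the supremum to be attained at an interior time and to create a strict sign for the time-derivative term. For $\eta$ small one still has a positive supremum $M_\eta>0$.

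Next I would double the time and space variables. Let $G_\eps : J \times J \to \rr$ denote the vertex test function associated with $F_A$; its defining features are that it is superlinear and bounded below (playing the role of a penalization, with coercivity forcing existence of maximizers) and, crucially, that it satisfies the key inequality
\[
H(y, -\partial_y G_\eps(x,y)) - H(x, \partial_x G_\eps(x,y)) \le \gamma(\eps)
\]
for all $x,y \in J$, where $\gamma(\eps)\to 0$ as $\eps\to 0$ and $H(\cdot,\cdot)$ is the shorthand \eqref{short} incorporating both the branch Hamiltonians and $F_A$. I would then study
\[
M_{\eps,\delta} = \sup_{(t,x),(s,y)} \Big( u(t,x) - v(s,y) - G_\eps(x,y) - \tfrac{(t-s)^2}{2\delta} - \eta\langle x\rangle - \tfrac{\eta}{T-t} \Big)
\]
and analyze a maximizing point $(\bar t,\bar x,\bar s,\bar y)$, which exists by the coercivity of $G_\eps$ and the localization. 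The usual penalization estimates give $\frac{(\bar t-\bar s)^2}{\delta}\to 0$, that $G_\eps(\bar x,\bar y)$ stays bounded, and that $M_{\eps,\delta}$ approaches $M_\eta$ in the limits.

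At the maximizing point I would use $(t,x)\mapsto v(\bar s,\bar y)+G_\eps(x,\bar y)+\tfrac{(t-\bar s)^2}{2\delta}+\eta\langle x\rangle+\tfrac{\eta}{T-t}$ as a test function from above for the subsolution $u$ at $(\bar t,\bar x)$, and symmetrically a test function from below for the supersolution $v$ at $(\bar s,\bar y)$. This produces two viscosity inequalities, one involving $H(\bar x,\partial_x G_\eps(\bar x,\bar y))$ and one involving $H(\bar y,-\partial_y G_\eps(\bar x,\bar y))$. Subtracting them, the time-derivative terms combine to yield a strictly positive contribution from the $\frac{\eta}{T-t}$ penalization, while the difference of Hamiltonian terms is exactly controlled by the key inequality above, hence bounded by $\gamma(\eps)$ (the $\eta\langle x\rangle$ contribution being $O(\eta)$). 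Letting $\delta\to 0$, then $\eps\to 0$, then $\eta\to 0$, the positive time term survives while the right-hand side tends to $0$, contradicting $M>0$; therefore $u\le v$.

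The step I expect to be the main obstacle is the junction-point analysis together with the correct exploitation of the vertex test function. One must check that the test functions built from $G_\eps$ are admissible in the sense of Definition~\ref{Def1} (i.e. $C^1$ on each branch up to $x=0$), and verify that when $\bar x$ or $\bar y$ equals the junction point $0$ — the configuration that defeats the naive quadratic penalization — the interplay between the flux-limited condition $F_A$ and the sub-/super-solution inequalities closes up without losing the gain $\gamma(\eps)\to 0$. Ensuring that the linear-growth localization term $\eta\langle x\rangle$ does not spoil the viscosity inequalities at the vertex also requires care.
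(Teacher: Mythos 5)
Your proposal is correct and follows essentially the same route as the paper: Theorem~\ref{Theo4} is imported from \cite{IM} without proof here, and the doubling-of-variables argument with the vertex test function of Theorem~\ref{thm:vertex} (using its superlinearity, its smallness on the diagonal, and the compatibility condition on the gradients to absorb the junction-point difficulty) is precisely the proof of \cite{IM}, which the paper redeploys in discrete form in Section~\ref{sec:ee}. The only points to tidy are the preliminary reduction to $p_0^\alpha=0$ so that Theorem~\ref{thm:vertex} applies (via $u\mapsto u-p_0^\alpha x$ on each branch, as in the proof of Theorem~\ref{thm:ee}), and the use of a localization parameter independent of the one in $\eta/(T-t)$ so that the Lipschitz-in-$p$ error coming from the linear-growth term does not compete with the time-penalization gain.
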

\begin{theorem}[\textbf{General junction conditions reduce to
    flux-limited ones}] \label{Theo5} Assume that the Hamiltonians
  satisfy \eqref{H} and that $F$ satisfies \eqref{F}. Then there exists
  $A_F\in\rr$ such that any relaxed viscosity (sub-/super-)solution of
  \eqref{eq:main} is in fact a viscosity (sub-/super-)solution of
  \eqref{eq:continuous} with $A=A_F.$
\end{theorem}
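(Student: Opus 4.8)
The plan is to prove the sub-solution and the super-solution statements separately, for a single explicitly constructed constant $A_F$, and to localise everything at the junction point: on each open branch $J_\alpha^\star$ the equations in \eqref{eq:main} and \eqref{eq:continuous} coincide, so a relaxed (sub/super)solution already satisfies the required branch inequalities there and only the condition at $x_0=0$ has to be upgraded. To define $A_F$, I would use the increasing parts of the Hamiltonians: with $A_0=\max_\alpha\min_\rr H_\alpha$ as in \eqref{a0}, for every $\lambda\ge A_0$ and every $\alpha$ let $\bar p_\alpha(\lambda)\ge p_0^\alpha$ be the unique point with $H_\alpha(\bar p_\alpha(\lambda))=\lambda$ (it exists, is continuous and increasing in $\lambda$, by coercivity and quasi-convexity). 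Then I would define $A_F$ as the unique $\lambda\ge A_0$ solving
\[ F\big(\bar p_1(\lambda),\dots,\bar p_N(\lambda)\big)=\lambda, \]
the point being that the left-hand side is non-increasing in $\lambda$ (monotonicity of $F$) while the right-hand side is increasing, so the two meet at a single value; one checks this returns $A_F=A$ when $F=F_A$. The decisive structural fact, recorded for later use, is that at the critical gradient $\bar p(A_F)=(\bar p_1(A_F),\dots,\bar p_N(A_F))$ one has simultaneously $H_\alpha(\bar p_\alpha(A_F))=A_F$ for every $\alpha$ and $F(\bar p(A_F))=A_F$.

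Before comparing the two notions I would establish an auxiliary one-sided regularity lemma at the junction: if $u$ is a sub-solution of the branch equations then every test function $\varphi$ touching $u^\star$ from above at $(t_0,0)$ satisfies $\varphi_t+H_\alpha^-(\partial_\alpha\varphi)\le0$ for all $\alpha$ (and symmetrically $\varphi_t+H_\alpha^+(\partial_\alpha\varphi)\ge0$ for super-solutions). This takes care, for free, of the monotone contributions $H_\alpha^-$ appearing in $F_{A_F}$ and reduces the flux-limited junction inequality to its scalar part: for a sub-solution it remains to prove $\varphi_t+A_F\le0$ for all touching $\varphi$ whose slopes satisfy $\partial_\alpha\varphi\ge p_0^\alpha$ (so that $F_{A_F}(\varphi_x)=A_F$), and dually for super-solutions. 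The key mechanism is then a \emph{pairing with the critical gradient}: because a test function touching from above keeps touching when its branch slopes are increased (and one touching from below when they are decreased), the current value of $\varphi_t$ can be transferred to the slope $\bar p(A_F)$ without destroying the contact.

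For the sub-solution statement I would argue as follows. Fix a touching $\varphi$ at $(t_0,0)$ with $\partial_\alpha\varphi\ge p_0^\alpha$ and time-slope $\tau=\varphi_t$. If on each branch $\partial_\alpha\varphi\le\bar p_\alpha(A_F)$, I raise the slopes up to $\bar p(A_F)$ (keeping the same $\tau$ and the contact); the relaxed sub-solution property applied to this modified test function gives either $\tau+F(\bar p(A_F))\le0$ or $\tau+H_\alpha(\bar p_\alpha(A_F))\le0$ for some $\alpha$, and by the structural identity both alternatives read exactly $\tau+A_F\le0$. If instead $\partial_\alpha\varphi>\bar p_\alpha(A_F)\ge p_0^\alpha$ on some branch, then $H_\alpha(\partial_\alpha\varphi)>A_F$, and the interior branch inequality propagated to the junction via the auxiliary lemma yields $\tau\le-H_\alpha(\partial_\alpha\varphi)<-A_F$, which is even stronger. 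In both cases $\tau+A_F\le0$, and combining with the monotone parts shows that $u$ is a flux-limited sub-solution with $A=A_F$. The super-solution statement is proved symmetrically, transferring the time-slope of a test function touching from below to $\bar p(A_F)$, using that both relaxed alternatives again collapse to $\tau+A_F\ge0$ at the critical gradient, and treating the case where $\bar p(A_F)$ cannot be reached by the interior super-solution inequality.

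The step I expect to be the main obstacle is precisely this dichotomy: making rigorous the interplay between the admissible time-slope $\tau$ (governed by the one-dimensional behaviour of $t\mapsto u^\star(t,0)$) and the admissible spatial slopes on each branch, and in particular showing that when $\bar p_\alpha(A_F)$ cannot be attained from above it is genuinely because $u$ is steeper there, so that the interior equation closes the argument. Establishing the auxiliary one-sided lemma cleanly, and checking that $A_F$ is well defined and finite (using coercivity of the $H_\alpha$ together with \eqref{F}, and handling the degenerate case where the fixed-point value drops to $A_0$), are the remaining technical points; both are of the routine-but-careful type once the critical-gradient identity above is in hand.
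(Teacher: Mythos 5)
First, a caveat: the paper itself does not prove Theorem \ref{Theo5}; it is recalled from \cite{IM}, so the only meaningful comparison is with the proof given there. Your architecture matches that proof in its main lines: the fixed-point definition of $A_F$ via $F\big(\pi_1^+(\lambda),\dots,\pi_N^+(\lambda)\big)=\lambda$, the identity $H_\alpha(\pi_\alpha^+(A_F))=A_F=F(\pi^+(A_F))$ that collapses both relaxed alternatives into the single inequality $\tau+A_F\le0$, the one-sided lemma handling the $H_\alpha^-$ part of $F_{A_F}$, and the idea of transferring the test function to the critical gradient are all the right ingredients.

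There is, however, a genuine gap in your second case for sub-solutions. You claim that if $\partial_\alpha\varphi>\pi_\alpha^+(A_F)\ge p_0^\alpha$ on some branch, then the auxiliary lemma yields $\tau\le-H_\alpha(\partial_\alpha\varphi)<-A_F$. But your auxiliary lemma only gives $\tau+H_\alpha^-(\partial_\alpha\varphi)\le0$, and for $\partial_\alpha\varphi\ge p_0^\alpha$ one has $H_\alpha^-(\partial_\alpha\varphi)=\min H_\alpha\le A_0\le A_F$, so it gives nothing here. Worse, the inequality $\varphi_t+H_\alpha(\partial_\alpha\varphi)\le0$ you invoke is simply false at the boundary for sub-solutions of the branch equation: with $H_\alpha(p)=p^2$ and $u\equiv0$, the function $\varphi=Cx$ touches from above at $x=0$ and gives $\varphi_t+H_\alpha(\partial_\alpha\varphi)=C^2>0$. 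The correct dichotomy is not on $\partial_\alpha\varphi$ but on the \emph{critical slopes} $\bar q_\alpha:=\partial_\alpha\varphi(t_0,0)+\bar p_\alpha$, where $\bar p_\alpha=\inf\{p:\varphi+px\ge u^\star \text{ near }(t_0,0)\text{ on }J_\alpha\}\le0$. The critical slope lemma of \cite{IM} asserts $\varphi_t+H_\alpha(\bar q_\alpha)\le0$ --- the full Hamiltonian, but evaluated at $\bar q_\alpha$, not at $\partial_\alpha\varphi$. If $\bar q_\alpha\le\pi_\alpha^+(A_F)$ for every $\alpha$, the slope $\pi_\alpha^+(A_F)$ is admissible on every branch (note this may require \emph{lowering} some slopes, which your ``raise the slopes'' step does not permit and which is exactly what the critical slope controls), and the relaxed condition at the critical gradient gives $\tau+A_F\le0$. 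If $\bar q_\alpha>\pi_\alpha^+(A_F)$ for some $\alpha$, then $H_\alpha(\bar q_\alpha)=H_\alpha^+(\bar q_\alpha)\ge A_F$ and the critical slope lemma closes the case. You do flag this as ``the main obstacle'', but the argument you actually wrote for it is incorrect, and the critical slope lemma --- which is where essentially all the work lies in \cite{IM}, for the super-solution side as well --- is a missing ingredient rather than a routine verification.
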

\begin{theorem}[\textbf{Existence and uniqueness on a
    junction}] \label{thm:exis-cont} Assume that the Hamiltonians satisfy
  \eqref{H} and that $F$ satisfies \eqref{F} and that the initial datum
  $u_0$ is Lipschitz continuous. Then there exists a unique relaxed
  viscosity solution $u$ of \eqref{eq:main}-\eqref{eq:ic}, such that
\[\abs{u(t,x)-u_{0}(x)}\le C t \quad \text{ for all } \quad (t,x)\in[0,T)\times J\]
for some constant $C$ only depending on $H$ and $u_0$. Moreover, it is
Lipschitz continuous with respect to time and space, in particular,
\[ \| \nabla u \|_\infty \le C.\]
\end{theorem}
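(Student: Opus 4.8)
The plan is to obtain uniqueness (together with the comparison principle) directly from the two theorems already stated, and to build the solution by Perron's method, reading off the quantitative bounds from well-chosen barriers and comparison. For uniqueness, suppose $u$ and $v$ are two relaxed viscosity solutions of \eqref{eq:main}-\eqref{eq:ic}. By Theorem~\ref{Theo5} both are in fact viscosity solutions of the flux-limited problem \eqref{eq:continuous} with the same limiter $A=A_F$. If I can check that each satisfies the sub-linear growth bounds required by Theorem~\ref{Theo4}, then applying the comparison principle twice (exchanging the roles of sub- and super-solution) yields $u\le v$ and $v\le u$, hence $u=v$. Those growth bounds follow from the estimate $\abs{u(t,x)-u_0(x)}\le Ct$ combined with the global Lipschitz continuity of $u_0$, which forces $\abs{u(t,x)}\le C_T(1+d(0,x))$ on $[0,T)\times J$; so I would establish that estimate first, as part of the existence step, and reuse it here.

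For existence I would run Perron's method. First I would produce global barriers $u^\pm(t,x)=u_0(x)\pm Ct$. Since $u_0$ is $L$-Lipschitz, each $H_\alpha$ is bounded on $[-L,L]$ and $F$ is continuous, so I expect that for $C$ large enough (depending only on $L$, the $H_\alpha$ and $F$) the function $u^+$ is a relaxed super-solution and $u^-$ a relaxed sub-solution, both matching $u_0$ at $t=0$; at the junction the relaxed ``either/or'' alternative of Definition~\ref{Def2} is what makes the verification tractable, the super-solution using the Hamiltonian branch and the sub-solution using that the monotonicity of $F$ bounds the flux from above along admissible test gradients. I would then set
\[ u(t,x)=\sup\{\, w(t,x): w \text{ is a relaxed sub-solution, } u^-\le w\le u^+\,\}. \]
The standard Perron argument shows that $u^\star$ is a relaxed sub-solution and $u_\star$ a relaxed super-solution; the comparison principle (again routed through Theorem~\ref{Theo5} and Theorem~\ref{Theo4}) then gives $u^\star\le u_\star$, so $u=u^\star=u_\star$ is continuous and solves the problem. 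Being squeezed between $u^-$ and $u^+$ immediately yields $\abs{u(t,x)-u_0(x)}\le Ct$.

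It remains to upgrade this to space-time Lipschitz regularity. For time regularity I would exploit that the equation is invariant under the addition of constants and under time translation: for $h>0$, the function $u(\cdot+h,\cdot)$ solves \eqref{eq:main} with initial datum $u(h,\cdot)$, and the previous estimate gives $\norm{u(h,\cdot)-u_0}_\infty\le Ch$. Comparison between $u(\cdot+h,\cdot)$ and $u\pm Ch$ then yields $\abs{u(t+h,x)-u(t,x)}\le Ch$, i.e. $\abs{u_t}\le C$ in the viscosity sense. The spatial gradient bound follows at once: away from the junction the equation reads $H_\alpha(u_x)=-u_t$, and since $\abs{u_t}\le C$ the coercivity of $H_\alpha$ in \eqref{H} confines $u_x$ to a bounded set, giving $\norm{\nabla u}_\infty\le C$.

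The main obstacle I expect is the junction point itself. Verifying the barriers and, above all, executing the Perron ``bump'' construction at $x=0$ in the relaxed sense of Definition~\ref{Def2} is where the singular geometry of $J$ interferes with the classical Euclidean arguments; the monotonicity assumption on $F$ in \eqref{F-weak} is precisely what lets me control the flux at the node and close this step. Everything away from $0$ reduces to the classical one-dimensional theory, so the work concentrates at the vertex.
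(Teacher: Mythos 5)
The paper does not actually prove this theorem: it is quoted from \cite{IM} as background, so there is no in-paper proof to compare against. Your outline --- Perron's method between the barriers $u_0\pm Ct$, uniqueness by routing through Theorem~\ref{Theo5} and then the comparison principle of Theorem~\ref{Theo4}, the time-translation argument for the time-Lipschitz bound, and coercivity of $H_\alpha$ for the spatial gradient bound --- is precisely the route taken in \cite{IM} and is architecturally correct, with the caveat that the genuinely delicate step (the Perron bump construction at the vertex in the relaxed sense of Definition~\ref{Def2}, and the verification of the barriers there using only sub-/super-differentials of the Lipschitz datum $u_0$) is acknowledged rather than carried out.
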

The following proposition is a main tool in the proof of error estimates. Indeed, we use a test function which is not $C^{1}$ with respect to the gradient variable at one point and this proposition allows us to get a ``weak viscosity inequality''. 
We don't give the proof since it is the same as the proof of \cite[Proposition 2.16]{IM}. 

\begin{proposition}[Non $C^{1}$ test function at one point \cite{IM}]
\label{propimp}
Assume that $H$ satisfies \eqref{H} and let $u$ be a solution of 
 $$u_t+H_{\alpha}(u_x)=0 \quad \text{ in } (0,T)\times J_\alpha \setminus \{0\}.$$
 For all $x_{0}\in J_\alpha \setminus \{0\}$ and all test function $\varphi\in C^{1}((0,T)\times J_\alpha \setminus \{0,x_{0}\})$ 
\[u^{\star}\le \varphi \quad (\text{resp. }  u_{\star}\ge \varphi) 
\quad \text{ in a neighborhood of } \quad (t_0,x_{0})\in (0,T)\times J_\alpha \setminus \{0\}\]
 with equality at $(t_0,x_{0})$, we have 
\[\varphi_{t}(t_{0},x_{0})+\max\left\{H_{\alpha}^{+}(\varphi_{x}(t_{0},x_{0}^{-}),H_{\alpha}^{-}(\varphi_{x}(t_{0},x_{0}^{+})\right\} \le 0 \quad (\text{resp. } \ge 0). \]
\end{proposition}

\section{Discrete gradient estimates}
\label{sec:grad}

This section is devoted to the proofs of the discrete (time and space)
gradient estimates. These estimates ensure the monotonicity of the
scheme and, in turn, its convergence.
The discrete time derivative is defined as 
\[ W_i^{\alpha,n}:=\frac{U_i^{\alpha,n+1}-U_i^{\alpha,n}}{\Delta t}.\]

\begin{theorem}[\textbf{Discrete gradient estimates}] \label{Theo1} If
  $u^h=(U_i^{\alpha,n})$ is the numerical solution of
  \eqref{scheme}-\eqref{eq:cid} and if the CFL condition \eqref{cflr} is
  satisfied and if
{\begin{equation}
\label{maro}
m^0=  \inf_{\substack {\beta=1,\dots,N,\\i \in \nn} }W_i^{\beta,0}
\end{equation} 
is finite}, then the following two properties hold
  true for any $n\ge 0$.
\begin{enumerate}[label=\roman*,align=CenterWithParen]
\item (Gradient estimate) There exist
  $\underline{p}_{\alpha}$,$\overline{p}^{\alpha}$,
  $\underline{p}^0_{\alpha}$ (only depending on $H_\alpha$, $u_0$ and
  $F$) such that 
\begin{equation}
\label{gradient}
\left\lbrace\begin{array}{l}
\underline{p}_{\alpha}\le p_{i,+}^{\alpha,n}\le \overline{p}^{\alpha} \qquad i\ge 1, \; \alpha=1,\dots,N, \medskip\\ 
\underline{p}_{\alpha}^0\le p_{0,+}^{\alpha,n}\le \overline{p}^{\alpha}\qquad i= 0, \; \alpha=1,\dots,N.
\end{array}\right.
\end{equation}
\item (Time derivative estimate) 
The discrete time derivative $W_i^{\alpha,n}$
satisfies
\[m^0\le m^n\le m^{n+1}\le M^{n+1}\le M^n\le M^0\]
where
\[m^n :=\inf_{\alpha, i} W_{i}^{\alpha, n}, \qquad  M^n :=\sup_{\alpha, i} W_{i}^{\alpha, n}.\]
\end{enumerate}
\end{theorem}
{In the proofs of discrete gradient estimates, ``generalized'' inverse
functions of $H_{\alpha}^{\pm}$ are needed; they are defined  as follows:
\begin{equation}
\label{inverse}
\begin{cases}
\pi_\alpha^+ (a):=\sup \{ p : H_{\alpha}^+ (p) = \max(a, A_{\alpha}) \} \\
\pi_\alpha^- (a):=\inf \{ p : H_{\alpha}^- (p) = \max(a,A_{\alpha}) \}
\end{cases}
\end{equation}
with the additional convention that $(H_{\alpha}^{\pm})^{-1}(+\infty)=\pm \infty$, where  
\[A_{\alpha}:=\min_{\substack{\rr}} H_{\alpha}.\]

In order to define a ``generalized'' inverse function of $F$, we
remark that \eqref{F} implies that
\[ \text{ for all } K \in \rr, \text{ there exists } \underline{\rho}(K)=(\rho_{1}(K),\dots,\rho_{N}(K)) \in\rr^N \text{
  such that } F(p_1,\dots,p_N)\le K \Rightarrow p_{\alpha}\ge
\underline{\rho}_{\alpha}(K).\]
Remark that the functions $\underline{\rho}_\alpha$ can be chosen non-increasing. }
\begin{remark}
  The quantities $\underline{p}_{\alpha}$,$\overline{p}^{\alpha}$,
  $\underline{p}^0_{\alpha}$ are defined as follows
\begin{equation}
\label{grd}
\left\lbrace
\begin{array}{l}
\underline{p}_{\alpha}= \left\lbrace
\begin{array}{ll}
\pi_\alpha^-(-m^0) & \text{if}-m_{0}>A_{\alpha}\\
\pi_\alpha^-(-m^0+1) & \text{if}-m_{0}=A_{\alpha}
\end{array}
\right.
 \medskip \\ 
\overline{p}_{\alpha}=  \left\lbrace
\begin{array}{ll}
\pi_\alpha^+(-m^0) & \text{if}-m_{0}>A_{\alpha}\\
\pi_\alpha^+(-m^0+1) & \text{if}-m_{0}=A_{\alpha}
\end{array}
\right.\medskip \\
\underline{p}^0_\alpha = \left\lbrace
\begin{array}{ll}
  \underline{\rho}_\alpha(-m^0) & \text{ if } \underline{\rho}_\alpha(-m^0)<\overline{p}_\alpha\\
  \underline{\rho}_\alpha (-m^0+1) & \text{ if } \underline{\rho}_\alpha(-m^0)=\overline{p}_\alpha
\end{array}
\right.
\end{array}
\right.
\end{equation}
where $m^0$ is defined in \eqref{maro}.
\end{remark}
In order to establish Theorem \ref{Theo1}, we first prove two
auxiliary results. In order to state them, some notation should be introduced.

\subsection{Discrete time derivative estimates}

 In order to state the first one,
Proposition~\ref{Prop2} below, we introduce some notation.  For
$\sigma \in \{+,-\}$, we set
\[I_{i,\sigma}^{\alpha, n}:= [\min (p_{i,\sigma}^{\alpha,n},p_{i,\sigma}^{\alpha,n+1}),
\max (p_{i,\sigma}^{\alpha,n},p_{i,\sigma}^{\alpha,n+1})] \]
with $p_{i,\sigma}^{\alpha,n}$ defined in \eqref{dsg} and 
\begin{equation}
\label{intervalle}
D_{i,+}^{\alpha,n}:=\sup\bigg\{\sup_{p_{\alpha}\in I_{i,+}^{\alpha,n}}\abs{H'_{\alpha}(p_{\alpha})},
\sup_{p_{\alpha}\in I_{0,+}^{\alpha,n}}\bigg\{-(\nabla \cdot F)(p_1,\dots,p_N)\bigg\}\bigg\}.
\end{equation}
The following proposition asserts that if the discrete space gradients enjoy
suitable estimates, then the discrete time derivative is controlled. 
\begin{proposition}[\textbf{Discrete time derivative estimate}]\label{Prop2}
  Let $n\ge 0$ be fixed and $\Delta x$, $\Delta t > 0.$ Let us
  consider $(U_{i,\alpha}^{\alpha,n})_{\alpha,i}$ satisfying for some
  constant $C^n >0:$
\[\abs{p_{i,+}^{\alpha,n}} \le C^n \qquad \text{for} \quad i\ge 0, \; \alpha=1,\dots,N.\]
We also consider $(U_{i}^{\alpha,n+1})_{\alpha,i}$ and $(U_{i}^{\alpha,n+2})_{\alpha,i}$ computed using the scheme \eqref{scheme}.
If 
\begin{equation}
\label{d}
D_{i,+}^{\alpha,n}\le \frac{\Delta x}{\Delta t}\qquad \text{ for  } \quad i\ge 0, \;\alpha=1,\dots,N,
\end{equation}
then 
\[m^n\le m^{n+1}\le M^{n+1}\le M^n.\]
\end{proposition}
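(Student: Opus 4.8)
The plan is to establish a \emph{discrete maximum principle} for the time increments $W_i^{\alpha,n}$: I will show that, under the CFL-type bound \eqref{d}, each $W_i^{\alpha,n+1}$ is a convex combination of the neighbouring values $W_j^{\beta,n}$ at the previous time level. Granting this, taking the infimum and the supremum over $(\alpha,i)$ immediately yields $m^n\le W_i^{\alpha,n+1}\le M^n$ for every $(\alpha,i)$, hence $m^n\le m^{n+1}$ and $M^{n+1}\le M^n$; the remaining inequality $m^{n+1}\le M^{n+1}$ is trivial. The hypothesis $|p_{i,+}^{\alpha,n}|\le C^n$ serves only to guarantee that all gradients, and therefore $m^n$ and $M^n$, are finite and that the intervals $I_{i,\pm}^{\alpha,n}$ are bounded.

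First I would derive the equation satisfied by $W$. Writing the scheme \eqref{scheme} at the two consecutive times $n$ and $n+1$, subtracting and dividing by $\Delta t$, gives at an interior point ($i\ge 1$)
\[ W_i^{\alpha,n+1}=W_i^{\alpha,n}-\left[\mathcal H_\alpha(p_{i,+}^{\alpha,n+1},p_{i,-}^{\alpha,n+1})-\mathcal H_\alpha(p_{i,+}^{\alpha,n},p_{i,-}^{\alpha,n})\right], \]
where $\mathcal H_\alpha(p^+,p^-)=\max\{H_\alpha^-(p^+),H_\alpha^+(p^-)\}$, and an analogous identity at the junction ($i=0$) with $\mathcal H_\alpha$ replaced by $F$. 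The key structural fact is that the discrete gradients themselves satisfy $p_{i,+}^{\alpha,n+1}-p_{i,+}^{\alpha,n}=\frac{\Delta t}{\Delta x}(W_{i+1}^{\alpha,n}-W_i^{\alpha,n})$ and $p_{i,-}^{\alpha,n+1}-p_{i,-}^{\alpha,n}=\frac{\Delta t}{\Delta x}(W_i^{\alpha,n}-W_{i-1}^{\alpha,n})$, so every increment can be re-expressed through differences of $W$.

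Next I would linearise the difference of numerical Hamiltonians. Since $\mathcal H_\alpha$ is Lipschitz and $F$ is piecewise $C^1$, I write, using the integral form of the mean value theorem along the affine segment joining the two argument vectors,
\[ \mathcal H_\alpha(p^{n+1})-\mathcal H_\alpha(p^{n})=c_i^+\,(p_{i,+}^{\alpha,n+1}-p_{i,+}^{\alpha,n})+c_i^-\,(p_{i,-}^{\alpha,n+1}-p_{i,-}^{\alpha,n}), \]
with $c_i^\pm$ the averaged partial derivatives. Because $\mathcal H_\alpha$ is non-increasing in $p^+$ and non-decreasing in $p^-$, one has $c_i^+\le 0$ and $c_i^-\ge 0$; substituting the gradient identities above and collecting terms exhibits $W_i^{\alpha,n+1}$ as an affine combination of $W_{i-1}^{\alpha,n},W_i^{\alpha,n},W_{i+1}^{\alpha,n}$ whose off-diagonal coefficients $-\frac{\Delta t}{\Delta x}c_i^+$ and $\frac{\Delta t}{\Delta x}c_i^-$ are non-negative and whose three coefficients sum to $1$. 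At the junction the same computation, with $d_\beta$ the averaged $\partial_{p_\beta}F\le 0$, expresses $W_0^{n+1}$ as an affine combination of $W_0^n$ and $\{W_1^{\beta,n}\}_\beta$ with non-negative off-diagonal coefficients $-\frac{\Delta t}{\Delta x}d_\beta$ which, together with the diagonal one, sum to $1$.

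The one delicate point — and the main obstacle — is to check that the diagonal coefficient stays non-negative, i.e. that the affine combination is genuinely convex; in the interior this is the requirement $\frac{\Delta t}{\Delta x}(|c_i^+|+|c_i^-|)\le 1$. A naive Lipschitz bound on each of $c_i^+$ and $c_i^-$ separately would cost a spurious factor $2$ and be incompatible with the CFL constant in \eqref{d}. The resolution is that $\mathcal H_\alpha$ is the maximum of a function of $p^+$ alone and a function of $p^-$ alone, so at almost every point of the integration segment exactly one of the two partial derivatives vanishes; the two contributions therefore have disjoint supports, and $|c_i^+|+|c_i^-|$ is bounded by the single quantity $\max\{\sup_{I_{i,+}^{\alpha,n}}|H_\alpha'|,\sup_{I_{i,-}^{\alpha,n}}|H_\alpha'|\}$. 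Since $p_{i,-}^{\alpha,n}=p_{i-1,+}^{\alpha,n}$ gives $I_{i,-}^{\alpha,n}=I_{i-1,+}^{\alpha,n}$, applying \eqref{d} to the indices $i$ and $i-1$ yields $|c_i^+|+|c_i^-|\le \Delta x/\Delta t$. At the junction the corresponding condition is $\frac{\Delta t}{\Delta x}\sum_\beta|d_\beta|\le 1$, and since $\sum_\beta|d_\beta|\le \sup(-\nabla\cdot F)$ over the relevant box of gradients, this is exactly the second term in the definition \eqref{intervalle} of $D_{0,+}^{\alpha,n}$, again controlled by \eqref{d}. With all coefficients non-negative and summing to $1$, each $W_i^{\alpha,n+1}$ lies in $[m^n,M^n]$, and the stated chain $m^n\le m^{n+1}\le M^{n+1}\le M^n$ follows.
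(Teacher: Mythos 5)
Your proof is correct, but at the interior nodes it takes a genuinely different route from the paper's. The paper never linearises the numerical Hamiltonian $\mathcal H_\alpha$ in both of its arguments at once: to prove $W_i^{\alpha,n+1}\ge m^n$ it selects the index $\sigma$ for which the maximum is attained \emph{at time $n+1$}, uses the elementary inequality $\max\{a_n,b_n\}-\max\{a_{n+1},b_{n+1}\}\ge a_n-a_{n+1}$ to drop down to a single branch $H_\alpha^{-\sigma}$, and then linearises that one scalar function, obtaining the one-sided bound
\[
W_{i}^{\alpha,n+1}\ \ge\ \Bigl(1-C_{i,\sigma}^{\alpha,n}\tfrac{\Delta t}{\Delta x}\Bigr)W_{i}^{\alpha,n}+C_{i,\sigma}^{\alpha,n}\tfrac{\Delta t}{\Delta x}W_{i+\sigma}^{\alpha,n}
\]
(and symmetrically, with $\sigma$ chosen at time $n$, for the upper bound). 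Only one averaged derivative $C_{i,\sigma}^{\alpha,n}\le \Delta x/\Delta t$ ever appears, so the factor-$2$ problem you worry about never arises. You instead write $W_i^{\alpha,n+1}$ as an exact affine combination of $W_{i-1}^{\alpha,n},W_i^{\alpha,n},W_{i+1}^{\alpha,n}$ and must therefore control $|c_i^+|+|c_i^-|$; your resolution — that the two partial derivatives of $\max\{H_\alpha^-(p^+),H_\alpha^+(p^-)\}$ have essentially disjoint supports along the segment, so the sum is bounded by a single $\sup|H_\alpha'|$ — is correct and is exactly the right observation, and your identification $I_{i,-}^{\alpha,n}=I_{i-1,+}^{\alpha,n}$ together with \eqref{d} at index $i-1$ closes the estimate. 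What your version buys is a genuine two-sided discrete maximum principle in one computation (both monotonicity statements at once, and in fact $W_i^{\alpha,n+1}\in[\min_j W_j^{\cdot,n},\max_j W_j^{\cdot,n}]$ over the stencil); what the paper's version buys is that it bypasses the delicate point entirely. At the junction node the two arguments coincide, since $F$ is a single smooth function and $\sum_\beta|d_\beta|=\int_0^1(-\nabla\cdot F)\,d\tau$ is directly the quantity appearing in \eqref{intervalle}.

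One point you should tighten: the ``integral mean value theorem along the segment'' is not immediately available for $\mathcal H_\alpha$, which is only Lipschitz and can fail to be differentiable on the coincidence set $\{H_\alpha^-(p^+)=H_\alpha^+(p^-)\}$; the segment may meet this set in positive measure. The fix is standard: partition $[0,1]$ into the sets where each branch strictly attains the maximum (where the chain rule applies and the other partial derivative vanishes) and the coincidence set (where $\frac{d}{d\tau}\mathcal H_\alpha(p(\tau))$ equals the derivative of either branch a.e., and you may attribute it to one of the two variables arbitrarily), then define $c_i^\pm$ as the corresponding partial integrals. This preserves the signs $c_i^+\le 0$, $c_i^-\ge 0$ and the bound $|c_i^+|+|c_i^-|\le\max\bigl\{\sup_{I_{i,+}^{\alpha,n}}|H_\alpha'|,\sup_{I_{i,-}^{\alpha,n}}|H_\alpha'|\bigr\}$, so your argument goes through unchanged.
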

\begin{proof}[\textbf{Proof}]
For $\sigma =+$ (resp. $\sigma=-$), $-\sigma$ denotes $-$ (resp. $+$).
We introduce for $n\ge 0,$ $\alpha \in \{ 1,\dots,N\}$, $i\in \{1,\dots,N\}$, $\sigma\in\{+,-\}$,
\begin{eqnarray}
\label{a}
   C_{i,\sigma}^{\alpha,n}& :=&-\sigma\int_{0}^{1}(H^{-\sigma}_{\alpha})'
\left(p_{i,\sigma}^{\alpha,n+1}+\tau(p_{i,\sigma}^{\alpha,n}-p_{i,\sigma}^{\alpha,n+1})\right)d\tau
\ge 0, \\
\nonumber  {C_{0,+}^{\alpha,n}}& :=&-\int_{0}^{1}\frac{\partial F}{\partial p_{\alpha}}
\bigg(\{p_{0,+}^{\beta,n+1}+\tau(p_{0,+}^{\beta,n}-p_{0,+}^{\beta,n+1})\}_{\beta}\bigg)d\tau\ge 0.
\end{eqnarray}
Notice that for $i\ge 1$, $C_{i,\sigma}^{\alpha,n}$ is defined as the
integral of $(H_{\alpha}^{-\sigma})'$ over a convex combination of
$p\in I_{i,\sigma}^{\alpha,n}.$ Similarly for $C_{0,+}^{\alpha,n}$
which is defined as the integral of $F'$ on a convex combination of
$p\in I_{0,+}^{\alpha,n}.$ Hence, in view of \eqref{d}, we have for any $n\ge 0$, $\alpha=1,\dots,N$
and for any $\sigma \in\{+,-\}$ or for $i=0$ and $\sigma=+,$
we can check that
\begin{equation}
\label{c}
\begin{cases} C_{i,\sigma}^{\alpha,n}\le \frac{\Delta x}{\Delta t} & \text{ if } i \ge 1, \, \sigma \in \{-,+\} \medskip \\
\sum_{\beta =1}^N C_{0,+}^{\beta,n} \le \frac{\Delta x}{\Delta t}. & 
\end{cases}
\end{equation}

We can also underline that for any $n\ge 0$, $\alpha=1,\dots,N$ and
for any $i\ge 1,$ $\sigma \in\{+,-\}$ or for $i=0$ and $\sigma=+,$ we
have the following relationship
\begin{equation}
\label{b}
\frac{p_{i,\sigma}^{\alpha,n}-p_{i,\sigma}^{\alpha,n+1}}{\Delta t}=
-\sigma\frac{W_{i+\sigma}^{\alpha,n}-W_{i}^{\alpha,n}}{\Delta x}.
\end{equation}
Let $n\ge 0$ be fixed and consider $(U_i^{\alpha,n})_{\alpha,i}$ with
$\Delta x,\Delta t>0$ given. We compute
$(U_i^{\alpha,n+1})_{\alpha,i}$ and $(U_i^{\alpha,n+2})_{\alpha,i}$
using the scheme \eqref{scheme}.  \medskip

\paragraph{Step 1: $(m^n)_n$ is non-decreasing.} We want to show that
$W_i^{\alpha,n+1}\ge m^n$ for $i\ge 0$ and $\alpha=1,\dots,N.$ Let
$i\ge 0$ be fixed and let us distinguish two cases.

\subparagraph{Case 1: $i\ge 1$.}
Let a branch $\alpha$ be fixed and let  $\sigma(i,\alpha,n+1)=\sigma \in\{+,-\}$ be such that
\begin{equation}
\label{m}
\max\bigg\{H_{\alpha}^{+}(p_{i,-}^{\alpha,n+1}),H_{\alpha}^{-}(p_{i,+}^{\alpha,n+1})\bigg\}
=H_{\alpha}^{-\sigma}(p_{i,\sigma}^{\alpha,n+1}).
\end{equation}
We have
\[\begin{split}
\frac{W_i^{\alpha,n+1}-W_i^{\alpha,n}}{\Delta t} & =\frac{1}{\Delta t}\bigg(\max\bigg\{H_{\alpha}^{+}(p_{i,-}^{\alpha,n}),H_{\alpha}^{-}(p_{i,+}^{\alpha,n})\bigg\}
-\max\bigg\{H_{\alpha}^{+}(p_{i,-}^{\alpha,n+1}),H_{\alpha}^{-}(p_{i,+}^{\alpha,n+1})\bigg\}\bigg)\\
& \ge\frac{1}{\Delta t}\bigg(H_{\alpha}^{-\sigma}(p_{i,\sigma}^{\alpha,n})- H_{\alpha}^{-\sigma}(p_{i,\sigma}^{\alpha,n+1})\bigg)\\
&=\int_{0}^{1}(H_{\alpha}^{-\sigma})'(p_{i,\sigma}^{\alpha,n+1}+\tau (p_{i,\sigma}^{\alpha,n}-p_{i,\sigma}^{\alpha,n+1}))
\left(\frac{p_{i,\sigma}^{\alpha,n}-p_{i,\sigma}^{\alpha,n+1}}{\Delta t}\right) d\tau \\
&=C_{i,\sigma}^{\alpha,n}\bigg( \frac{W_{i+\sigma}^{\alpha,n}-W_{i}^{\alpha,n}}{\Delta x}\bigg)
\end{split}\]
where we used \eqref{a} and \eqref{b} in the last line. Using \eqref{c}, we thus get
\[\begin{split}
W_{i}^{\alpha,n+1}&\ge \bigg(1-C_{i,\sigma}^{\alpha,n}\frac{\Delta t}{\Delta x}\bigg)
W_{i}^{\alpha,n}+C_{i,\sigma}^{\alpha,n}\frac{\Delta t}{\Delta x}W_{i+\sigma}^{\alpha,n}\\
&\ge m^{n}.
\end{split}\]
 
\subparagraph{Case 2: $i=0$.}  We recall that in this case, we have
$U_0^{\beta,n}:=U_0^{n}$ and
$W_0^{\beta,n}:=W_0^n=\frac{U_0^{n+1}-U_0^n}{\Delta t}$ for any
$\beta=1,\dots,N.$ We compute in this case:
\[\begin{split}
\frac{W_0^{n+1}-W_0^{n}}{\Delta t}&=\frac{1}{\Delta t}\left(-F(\{p_{0,+}^{\alpha,n+1}\}_{\alpha})+F(\{p_{0,+}^{\alpha,n}\}_{\alpha})\right)  \\
&= \frac{1}{\Delta t}\int_0^1\sum_{\beta=1}^N p_\beta \frac{\partial F}{\partial p_{\beta}}\left(\{p_{0,+}^{\alpha,n+1}+\tau p_\alpha \}_{\alpha}\right)d\tau \qquad 
\text{with} \,\,\, p=(\{p_{0,+}^{\alpha,n}-p_{0,+}^{\alpha,n+1}\}_{\alpha})\\
&=-\int_0^1\sum_{\beta=1}^N\frac{\partial F}{\partial p_{\beta}}\left(\{p_{0,+}^{\alpha,n+1}+\tau p_\alpha\}_{\alpha}\right) d\tau
\bigg(\frac{W_1^{\beta,n}-W_0^n}{\Delta x}\bigg)\\
&=\sum_{\beta=1}^N C_{0,+}^{\beta,n}\bigg(\frac{W_1^{\beta,n}-W_0^n}{\Delta x}\bigg).
\end{split}\]
Using \eqref{c}, we argue like in Case~1 and get 
\[W_0^{n+1}\ge m^n.\]

\paragraph{Step 2: $(M^n)_n$ is non-increasing.}  
We want to show that $W_i^{\alpha,n+1}\le M^n$ for $i\ge 0$ and
$\alpha=1,\dots,N.$ We argue as in Step~1 by  distinguishing two cases.

\subparagraph{Case 1: $i\ge 1$.} We simply choose 
{$\sigma=\sigma(i,\alpha,n)$ (see \eqref{m})}
and argue as in Step~1.

\subparagraph{Case 2: $i=0$.} Using \eqref{d}, we can argue exactly as in Step~1. 
The proof is now complete. 
\end{proof}

\subsection{Gradient estimates}

The second  result needed in the proof of Theorem \ref{Theo1}
is the following one. It asserts that if the discrete time derivative
is controlled from below, then a discrete gradient estimate holds true. 
\begin{proposition}[\textbf{Discrete gradient estimate}]\label{Prop11}
  Let $n\ge 0$ be fixed, consider that $(U_i^{\alpha,n})_{\alpha,i}$
  is given and compute $(U_i^{\alpha,n+1})_{\alpha,i}$ using the
  scheme \eqref{scheme}-\eqref{dsg}.  If there exists a constant $K\in \rr$ such
  that for any $i\ge 0$ and $\alpha=1,\dots, N,$
\[K\le W_i^{\alpha,n}:=\frac{U_i^{\alpha,n+1}-U_i^{\alpha,n}}{\Delta t}\] 
then 
\[\left \lbrace
\begin{array}{rcll}
\pi_\alpha^- (-K) \le & p_{i,+}^{\alpha,n} & \le \pi_\alpha^+ (-K), &  \alpha=1,\dots,N, \quad i\ge 1,  \medskip \\ 
\underline{\rho}_{\alpha}(-K) \le & p_{0,+}^{\alpha,n} &\le (H_{\alpha}^{+})^{-1}(-K), &  \alpha=1,\dots,N
\end{array}
\right.
\]
where $p_{i,+}^{\alpha,n}$ is defined in \eqref{dsg} and
$\pi_\alpha^\pm$ and $\underline{p}$ are the ``generalized'' inverse
functions of $H_\alpha$ and $F$, respectively.
\end{proposition}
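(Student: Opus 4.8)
The plan is to extract the claimed gradient bounds directly from the scheme by reading the lower bound $W_i^{\alpha,n}\ge K$ as an inequality on the relevant one-sided Hamiltonians, then invert using the monotonicity properties recorded after \eqref{H}. The scheme on interior nodes ($i\ge 1$) reads
\[
\max\{H_\alpha^+(p_{i,-}^{\alpha,n}),H_\alpha^-(p_{i,+}^{\alpha,n})\}=-W_i^{\alpha,n}\le -K,
\]
so in particular each entry of the max is $\le -K$. The key observation is that $H_\alpha^-$ is non-increasing while $H_\alpha^+$ is non-decreasing, so a bound $H_\alpha^-(p_{i,+}^{\alpha,n})\le -K$ forces $p_{i,+}^{\alpha,n}$ to be large, giving the lower bound $p_{i,+}^{\alpha,n}\ge \pi_\alpha^-(-K)$, while $H_\alpha^+(p_{i,-}^{\alpha,n})\le -K$ forces $p_{i,-}^{\alpha,n}$ to be small. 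The subtlety is that we want a \emph{two-sided} bound on $p_{i,+}^{\alpha,n}$, not a lower bound on $p_{i,+}$ and an upper bound on $p_{i,-}$, so I would re-index: the upper bound $p_{i,+}^{\alpha,n}\le \pi_\alpha^+(-K)$ comes from applying the $H_\alpha^+$ estimate at the \emph{neighbouring} node, i.e. from $H_\alpha^+(p_{i+1,-}^{\alpha,n})\le -K$ together with the identity $p_{i+1,-}^{\alpha,n}=p_{i,+}^{\alpha,n}$ (both equal $(U_{i+1}^{\alpha,n}-U_i^{\alpha,n})/\Delta x$). This re-indexing trick is what converts the two one-sided estimates into the desired sandwich on a single discrete gradient.

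For the junction node I would argue separately. The third line of \eqref{scheme} gives $F(p_{0,+}^{1,n},\dots,p_{0,+}^{N,n})=-W_0^n\le -K$, and the coercivity-based definition of the generalized inverse $\underline{p}$ (the property displayed just before the vertex-test-function subsection) yields immediately $p_{0,+}^{\alpha,n}\ge \underline{p}_\alpha(-K)$ for each $\alpha$, which is the claimed lower bound. For the upper bound $p_{0,+}^{\alpha,n}\le (H_\alpha^+)^{-1}(-K)$ I would again pass to the first neighbouring interior node $i=1$ on branch $\alpha$: the interior scheme at $i=1$ gives $H_\alpha^+(p_{1,-}^{\alpha,n})\le -K$, and since $p_{1,-}^{\alpha,n}=(U_1^{\alpha,n}-U_0^n)/\Delta x=p_{0,+}^{\alpha,n}$ by definition \eqref{dsg} together with $U_0^{\beta,n}=U_0^n$, inverting the non-decreasing function $H_\alpha^+$ gives the upper bound. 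This is the reason the two entries of the junction estimate have asymmetric right-hand sides: the lower bound reflects the junction condition $F$, the upper bound reflects the branch Hamiltonian seen from the first interior node.

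The main obstacle, and the step I would write most carefully, is justifying the inversion of $H_\alpha^\pm$ precisely in the degenerate regimes, i.e. making sure the generalized inverses $\pi_\alpha^\pm$ defined in \eqref{inverse} produce valid bounds even when $-K$ is below the threshold $A_\alpha=\min H_\alpha$ or when the Hamiltonian is flat. Because $H_\alpha^\pm$ need not be strictly monotone (only non-increasing/non-decreasing) and $-K$ could lie below the minimum value $A_\alpha$, the clean scalar inversion ``$H(p)\le c\Rightarrow p\ge H^{-1}(c)$'' must be phrased through the $\sup$/$\inf$ in the definition of $\pi_\alpha^\pm$ and the $\max(\cdot,A_\alpha)$ truncation. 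Concretely, from $H_\alpha^-(p_{i,+}^{\alpha,n})\le -K$ one has $H_\alpha^-(p_{i,+}^{\alpha,n})\le\max(-K,A_\alpha)$, and since $H_\alpha^-$ is non-increasing this yields $p_{i,+}^{\alpha,n}\ge\inf\{p:H_\alpha^-(p)=\max(-K,A_\alpha)\}=\pi_\alpha^-(-K)$; the analogous monotonicity argument handles $\pi_\alpha^+$. Once this inversion lemma is stated cleanly, the remaining assembly is routine bookkeeping over the two cases $i\ge 1$ and $i=0$, using only the re-indexing identity $p_{i,+}^{\alpha,n}=p_{i+1,-}^{\alpha,n}$ and the junction-node convention.
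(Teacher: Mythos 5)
Your proof is correct and follows essentially the same route as the paper's: read the scheme as $-W_i^{\alpha,n}=\max\{H_\alpha^+(p_{i,-}^{\alpha,n}),H_\alpha^-(p_{i,+}^{\alpha,n})\}$ for $i\ge 1$ (resp. $-W_0^{n}=F(\{p_{0,+}^{\alpha,n}\}_\alpha)$ at the junction), bound each entry by $-K$, and invert via the generalized inverses $\pi_\alpha^\pm$ and $\underline{p}$. The paper's own proof is a two-line ``the result follows from'', so your explicit handling of the re-indexing identity $p_{i,+}^{\alpha,n}=p_{i+1,-}^{\alpha,n}$ and of the degenerate inversion through the $\max(\cdot,A_\alpha)$ truncation supplies precisely the details the paper leaves implicit.
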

\begin{proof}[\textbf{Proof}]
  Let $n\ge 0$ be fixed and consider $(U_i^{\alpha,n})_{\alpha,i}$
  with $\Delta x,\Delta t>0$ given. We compute
  $(U_i^{\alpha,n+1})_{\alpha,i}$ using the scheme~\eqref{scheme}.
 Let us consider any $i\ge 0$ and $\alpha=1,\dots,N.$ 

If $i \ge 1$,  the result follows  from
\[K\le W_i^{\alpha,n}=-\max_{\sigma = +,-} H_{\alpha}^{\sigma}(p_{i,\sigma}^{\alpha,n}).\]

If $i=0$,  the results follows  from 
\[K\le W_0^n=-F\bigg(\{p_{0,+}^{\alpha,n}\}_\alpha\bigg).\] 
This achieves the proof of Proposition~\ref{Prop11}
\end{proof}

\subsection{Proof of gradient estimates}

\begin{proof}[\textbf{Proof of Theorem~\ref{Theo1}}] 
  The idea of the proof is to introduce new Hamiltonians
  $\tilde{H_{\alpha}}$ and a new junction function $\tilde{F}$ for
  which it is easier to derive gradient estimates but whose
  corresponding numerical scheme in fact coincide with the original
  one.

 \paragraph{Step 1: Modification of the Hamiltonians and the
  junction function.}  Let the new Hamiltonians $\tilde{H_{\alpha}}$
for all $\alpha=1,\dots,N$ be defined as
\begin{equation}
\label{eq:modified-h}
\tilde{H_{\alpha}}(p)=\begin{cases}
H_{\alpha}(\underline{p}_{\alpha})-\frac{C_{\alpha}}{2}(p-\underline{p}_{\alpha}) & \text{ if }   p\le \underline{p}_{\alpha}\\
H_{\alpha}(p) & \text{ if }  p\in[\underline{p}_{\alpha}\overline{p}_\alpha]\\
H_{\alpha}(\underline{p}_{\alpha})+\frac{C_{\alpha}}{2}(p-\overline{p}_\alpha) & \text{ if }  p\ge \overline{p}_\alpha
\end{cases}
\end{equation}
where $\underline{p}_{\alpha}$ and $\overline{p}_\alpha$ are defined
in \eqref{grd} respectively, and 
\[C_{\alpha}=
\sup_{\substack{p_{\alpha}\in[\underline{p}_{\alpha},\overline{p}_\alpha]}}\abs{H_{\alpha}'(p_{\alpha})}.\]
{These new Hamiltonians are now globally Lipschitz
  continuous: their derivatives are bounded. More precisely, the}
$\tilde{H}_{\alpha}$ satisfy \eqref{H} and
\[ \tilde{H}_\alpha \equiv H_\alpha \text{ in }  [\underline{p}_\alpha,\overline{p}_\alpha]\]
and
\begin{equation}
\label{eq:htildeprime}
\forall p \in \rr, \quad 
\abs{\tilde{H}_{\alpha}'(p)} \le \sup_{\substack{p_{\alpha}\in[\underline{p}_{\alpha},\overline{p}_\alpha]}}
\abs{H_{\alpha}'(p_{\alpha})}. 
\end{equation}  
Let the new $\tilde{F}$ satisfy \eqref{F}, be such that 
\[ \tilde F \equiv F \text{ in } Q_0:=\prod_{\alpha=1}^{N}[\underline{p}_{\alpha}^0,\overline{p}_\alpha] \]
and (See Appendix \ref{AppendixB})
\begin{equation}
\label{eq:Ftildediv}
 \forall p \in \rr^N, \quad 
(-\nabla \cdot \tilde F)(p) \le \sup_{Q_0} (-\nabla \cdot F). 
\end{equation}
{In the remainder of the proof, when notation contains
  a tilde, it is associated with the new Hamiltonians
  $\tilde H_\alpha$ and the new non-linearity $\tilde F$.}  We then
consider the new numerical scheme
\[\left\lbrace
\begin{array}{lll}
\frac{\tilde{U}^{\alpha,n+1}_i-\tilde{U}^{\alpha,n}_i}{\Delta t}
+\max\{\tilde{H}_{\alpha}^{+}(\tilde{p}_{i,-}^{\alpha,n}), \tilde{H}_{\alpha}^{-}(\tilde{p}_{i,+}^{\alpha,n})\}=0, & i\ge 1, &
\alpha=1,\dots,N \medskip \\ 
\tilde{U}_0^{\beta,n}:=U_0^n, &  i=0, & \beta=1,\dots,N \medskip \\
\frac{\tilde{U}_0^{n+1}-\tilde{U}_0^{n}}{\Delta t}+\tilde{F}(\tilde{p}_{0,+}^{1,n},\tilde{p}_{0,+}^{2,n},\dots,\tilde{p}_{0,+}^{N,n})=0 &   &
\end{array}
\right.\]
with the same initial condition, namely,
\[\tilde{U}_i^{\alpha,0}=u_0^{\alpha}(i\Delta x), \quad i\ge 0,\quad \alpha=1,\dots,N.\]

In view of  \eqref{eq:htildeprime} and \eqref{eq:Ftildediv}, the CFL
condition~\eqref{cflr} gives that for any $i\ge 0$, $n\ge 0$, and
$\alpha=1,\dots,N$
\begin{equation}
\label{eq:dtilde}
\tilde{D}_{i,+}^{\alpha,n}\le \sup \left\{\sup_{\underline{p}_{\alpha}\le p\le \overline{p}_\alpha}{\abs{H_{\alpha}'(p)}}; 
\sup_{ \tilde{I}^{\alpha,n}_{0,+}} (-\nabla \cdot F) \right\}\le \frac{\Delta x}{\Delta t}
\end{equation}
{ where $\tilde{D}_{i,+}^{\alpha,n}$ is given by \eqref{intervalle} after replacing $H_\alpha$ and $F$ with $\tilde H_\alpha$ and $\tilde F$.}

\paragraph{Step 2: First gradient bounds.}
Let $n\ge 0$ be fixed. If $\tilde{m}^n$ and $\tilde{M}^n$ are finite, we have
\[\tilde{m}^n\le \tilde{W}_i^{\alpha,n}\qquad \text{for}\quad \text{any} \quad i\ge0, \quad \alpha=1,\dots,N .\] 
Proposition~\ref{Prop11} implies that
\[\left\lbrace
\begin{array}{rcll}
\tilde{\pi}_\alpha^- (-\tilde{m}^n) \le & \tilde{p}_{i,+}^{\alpha,n} & \le \tilde{\pi}_\alpha^+ (-\tilde{m}^n), &
i\ge 1,\quad \alpha=1,\dots,N, \\
\tilde{\underline{\rho}}_\alpha (-\tilde{m}^n) \le & \tilde{p}_{0,+}^{\alpha,n} & \le \tilde{\pi}_\alpha^+ (-\tilde{m}^n), &  i\ge 0,\quad \alpha=1,\dots,N.
\end{array}
\right.\]
In particular,  we get that
\[\abs{\tilde{p}_{i,+}^{\alpha,n}}\le C^n \quad \text{ for } \quad i\ge 0,\quad \alpha=1,\dots,N\]
with
\[C^n=\max_{\alpha} \left(\max \left( |\tilde{\pi}_\alpha^- (-\tilde{m}^n)|,| \tilde{\pi}_\alpha^+ (-\tilde{m}^n)|,
|\tilde{\underline{\rho}}_\alpha (-\tilde{m}^n)| \right)\right).\]
In view of \eqref{eq:dtilde}, Proposition~\ref{Prop2} implies that 
\begin{equation}
\label{ali}
\tilde{m}^n\le\tilde{m}^{n+1}\le\tilde{M}^{n+1}\le\tilde{M}^{n}\,\quad \text{for} \quad \text{any}\hskip 0.2cm n\ge 0.
\end{equation}
In particular, $\tilde{m}^{n+1}$ is also finite. Since
$\tilde{m}^0=m^0$ and $\tilde{M}^0 {=M^0}$ are finite, we conclude that
$\tilde{m}^n$ and $\tilde{M}^n$ are finite for all $n \ge 0$ and for
all $n \ge 0$,
\begin{equation}
\label{eq:time-der-tilde}
 m^0  \le \tilde{m}^n \le \tilde{M}^n  \le M^0.
\end{equation}

\paragraph{Step 3: Time derivative and gradient estimates.}
Now we can repeat the same reasoning  but applying Proposition~\ref{Prop11} with $K=m^0$ and get 
\begin{equation}
\label{eq:estim-ptilde}
\left\lbrace
\begin{array}{rcll}
\underline{p}_{\alpha}  \le & \tilde{p}_{i,+}^{\alpha,n} & \le \overline{p}_\alpha, &
i\ge 1,\quad \alpha=1,\dots,N, \\
\underline{p}_{\alpha}^0 \le & \tilde{p}_{0,+}^{\alpha,n} & \le \overline{p}_\alpha, &  i\ge 0,\quad \alpha=1,\dots,N.
\end{array}
\right.
\end{equation}
This implies that $\tilde{U}_i^{\alpha,n} = U_i^{\alpha,n}$ for all $i
\ge 0$, $n \ge 0$, $\alpha = 1,\dots, N$. In view of \eqref{ali},
\eqref{eq:time-der-tilde} and \eqref{eq:estim-ptilde}, the proof is
now complete.
\end{proof}

\section{Convergence for general junction conditions}
\label{sec:conv}

This section is devoted to the convergence of the scheme defined by
\eqref{scheme}-\eqref{dsg}. In order to do so, we first make precise
how to choose $\overline{p}_\alpha, \underline{p}_\alpha$ and
$\underline{p}_\alpha^0$ in the CFL condition~\eqref{cflr}.

\subsection{Monotonicity of the scheme}

In order to prove the convergence of the numerical solution as the
mesh size tends to zero, we need first to prove a monotonicity result.
It is common to write the scheme defined by \eqref{scheme}-\eqref{dsg}
under the compact form 
\[ u^h (t+\Delta t,x) = S^h [u^h(t)](x)\] where the operator
$S^h$ is defined on the set of functions defined in
$J^h$. The scheme is monotone if 
\[u \le v \quad \Rightarrow \quad S^h [u] \le S^h [v].\] In our
cases, if $t = n \Delta t$ and $x = i\Delta x \in J^\alpha$ and
$U(t,x) = U^{\alpha,n}_i$ for $x \in J^\alpha$, then $S^h[U]$ is
defined as follows,
\[\begin{cases}
U_i^{\alpha,n+1}=S_{\alpha}[U_{i-1}^{\alpha,n},U_i^{\alpha,n},U_{i+1}^{\alpha,n}]  \text{ for }  i\ge 1, \,\alpha=1,\dots,N, \\
U_0^{n+1}=S_{0}[U_0^{n},(U_1^{\beta,n})_{\beta=1,\dots,N}] 
\end{cases}\]
where 
\begin{equation}
\label{Salpha}
\left\lbrace
\begin{array}{l}
S_{\alpha}[U_{i-1}^{\alpha,n},U_i^{\alpha,n},U_{i+1}^{\alpha,n}]:=U_i^{\alpha,n}-\Delta t 
\max \left\{H_{\alpha}^{+}\left(\frac{U_i^{\alpha,n}-U_{i-1}^{\alpha,n}}{\Delta x}\right),
H_{\alpha}^{-}\left(\frac{U_{i+1}^{\alpha,n}-U_{i}^{\alpha,n}}{\Delta x}\right)\right\}, \medskip \\ 
S_{0}[U_0^{n},(U_1^{\beta,n})_{\beta=1,\dots,N}]:=U_0^{n}-\Delta t F(p_{0,+}^{1,n},\dots,p_{0,+}^{N,n})
\end{array}\right.
\end{equation}
Checking the monotonicity of the scheme reduces to checking that
$S_{\alpha}$ and $S_0$ are non-decreasing in all their variables.
\begin{lemma}[\textbf{Monotonicity of the numerical scheme}]\label{lem:monotone}
  Let $(U^n):=(U_i^{\alpha,n})_{\alpha,i}$ the numerical solution of
  \eqref{scheme}-\eqref{eq:cid}. Under the CFL condition \eqref{cfl}
  the scheme is monotone.
\end{lemma}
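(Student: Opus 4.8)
The plan is to reduce the monotonicity of the global operator $S^\eps$ to the coordinate-wise monotonicity of the two elementary maps $S_\alpha$ and $S_0$ defined in \eqref{Salpha}. This reduction is the observation made just before the statement: a given nodal value $U_j^{\beta,n}$ enters the update $S^\eps[U]$ only through one $S_\alpha$ (or $S_0$) as one of its arguments, so if each $S_\alpha$ and $S_0$ is non-decreasing in each of its arguments, then $u\le v$ componentwise forces $S^\eps[u]\le S^\eps[v]$. Thus it suffices to check monotonicity argument by argument.

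First I would dispose of the ``off-diagonal'' variables, for which no CFL condition is needed. In $S_\alpha[a,b,c]$ (with $a=U_{i-1}^{\alpha,n}$, $b=U_i^{\alpha,n}$, $c=U_{i+1}^{\alpha,n}$), increasing $a$ decreases $p_{i,-}^{\alpha,n}=(b-a)/\Delta x$, hence decreases $H_\alpha^+(p_{i,-}^{\alpha,n})$ since $H_\alpha^+$ is non-decreasing, hence decreases the maximum; as this maximum enters with the factor $-\Delta t<0$, the map $S_\alpha$ is non-decreasing in $a$. Symmetrically, increasing $c$ increases $p_{i,+}^{\alpha,n}$ and decreases $H_\alpha^-(p_{i,+}^{\alpha,n})$ because $H_\alpha^-$ is non-increasing, so $S_\alpha$ is non-decreasing in $c$. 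For $S_0$, increasing $U_1^{\beta,n}$ increases $p_{0,+}^{\beta,n}$ and, by the strict monotonicity of $F$ required in \eqref{F}, decreases $F$, so $S_0$ is non-decreasing in each $U_1^{\beta,n}$.

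The heart of the lemma is monotonicity in the ``diagonal'' variables $b=U_i^{\alpha,n}$ for $S_\alpha$ and $U_0^n$ for $S_0$, where the explicit $+b$ (resp. $+U_0^n$) term competes with the $-\Delta t(\text{Hamiltonian})$ term and must be balanced by the CFL condition. For $S_\alpha$, fixing $a,c$ and taking $b_1\le b_2$, I would write $S_\alpha[a,b_2,c]-S_\alpha[a,b_1,c]=(b_2-b_1)-\Delta t\,(M(b_2)-M(b_1))$ where $M(b)=\max\{H_\alpha^+((b-a)/\Delta x),H_\alpha^-((c-b)/\Delta x)\}$. Both arguments of the maximum are non-decreasing in $b$, so $M$ is non-decreasing; moreover the maximum of two functions is $1$-Lipschitz with respect to the sup of its arguments, so $0\le M(b_2)-M(b_1)\le \Delta x^{-1}\max(L^+,L^-)(b_2-b_1)$, where $L^{\pm}$ are the Lipschitz constants of $H_\alpha^{\pm}$ over the relevant gradient intervals. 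The key point is that $|(H_\alpha^+)'|$ and $|(H_\alpha^-)'|$ are both dominated by $|H_\alpha'|$, and that $p_{i,-}^{\alpha,n}=p_{i-1,+}^{\alpha,n}$, so the single quantity $\max_{i,\alpha,n}|H_\alpha'(p_{i,+}^{\alpha,n})|$ appearing in \eqref{cfl} controls both $L^+$ and $L^-$; the CFL condition then yields $\Delta t\,\Delta x^{-1}\max(L^+,L^-)\le 1$ and hence the increment is non-negative.

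For $S_0$ in the variable $U_0^n$, I would use the integral representation already exploited in the proof of Proposition~\ref{Prop2}: for $U_0^{(1)}\le U_0^{(2)}$ set $p^{(j)}_\beta=(U_1^{\beta,n}-U_0^{(j)})/\Delta x$, so that $p^{(2)}-p^{(1)}$ has all components equal to $-(U_0^{(2)}-U_0^{(1)})/\Delta x$; the fundamental theorem of calculus gives $F(p^{(2)})-F(p^{(1)})=-(U_0^{(2)}-U_0^{(1)})\,\Delta x^{-1}\int_0^1(\nabla\cdot F)\,d\tau$, whence $S_0[U_0^{(2)},\cdots]-S_0[U_0^{(1)},\cdots]=(U_0^{(2)}-U_0^{(1)})\big(1-\Delta t\,\Delta x^{-1}\int_0^1(-\nabla\cdot F)\,d\tau\big)$. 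Since $-\nabla\cdot F\ge 0$ (as $F$ is non-increasing in each variable) and \eqref{cfl} bounds $(-\nabla\cdot F)$ by $\Delta x/\Delta t$ along the relevant gradients, the bracketed factor is non-negative and $S_0$ is non-decreasing in $U_0^n$. The main obstacle is precisely these two diagonal cases, and in particular recognizing that the lone quantity $\max_i|H_\alpha'(p_{i,+}^{\alpha,n})|$ in the CFL condition governs \emph{both} the forward and backward slopes feeding the maximum (through $p_{i,-}^{\alpha,n}=p_{i-1,+}^{\alpha,n}$ and $|(H_\alpha^{\pm})'|\le|H_\alpha'|$), and that the max-of-two structure costs only the larger of the two Lipschitz constants rather than their sum.
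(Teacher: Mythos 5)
Your proposal is correct and follows essentially the same route as the paper: both reduce the statement to checking that $S_\alpha$ and $S_0$ from \eqref{Salpha} are non-decreasing in each argument, dispose of the off-diagonal variables using the monotonicity of $H_\alpha^\pm$ and of $F$, and invoke the CFL condition \eqref{cfl} only for the diagonal variables $U_i^{\alpha,n}$ and $U_0^n$. Your finite-increment formulation (with the $1$-Lipschitz property of the max and the identity $p_{i,-}^{\alpha,n}=p_{i-1,+}^{\alpha,n}$) is just a slightly more explicit version of the paper's computation of $\partial S_\alpha/\partial U_i^{\alpha,n}$ and $\partial S_0/\partial U_0^n$ by cases.
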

\begin{proof}[\textbf{Proof}]
We distinguish two cases. 

\paragraph{Case 1: $i\ge 1$.}
It is straightforward to check that, for any $\alpha=1,\dots,N,$ the
function $S_{\alpha}$ is non-decreasing with respect to
$U^{\alpha,n}_{i-1}$ and $U_{i+1}^{\alpha,n}$. Moreover,
\[\frac{\partial S_{\alpha}}{\partial U_i^{\alpha,n}}=\left\lbrace
\begin{array}{l}
1-\frac{\Delta t}{\Delta x}(H_{\alpha}^{+})'(p_{i,-}^{\alpha,n}) \quad \text{ if } \quad
\max\{H_{\alpha}^{+}(p_{i,-}^{\alpha,n}),H_{\alpha}^{-}(p_{i,+}^{\alpha,n})\}=H_{\alpha}^{+}(p_{i,-}^{\alpha,n}) \medskip \\
1+\frac{\Delta t}{\Delta x}(H_{\alpha}^{-})'(p_{i,+}^{\alpha,n})\quad \text{ if } \quad
\max\{H_{\alpha}^{+}(p_{i,-}^{\alpha,n}),H_{\alpha}^{-}(p_{i,+}^{\alpha,n})\}=H_{\alpha}^{-}(p_{i,+}^{\alpha,n})
\end{array}
\right.\] 
which is non-negative if the CFL condition (\ref{cfl}) is
satisfied.

\paragraph{Case 2: $i=0$.}
Similarly it is straightforward to check that $S_0$ is non-decreasing
with respect to $U_1^{\beta,n}$ for $\beta=1,\dots,N$. Moreover,
\[\frac{\partial S_0}{\partial U_0^n}=1+\frac{\Delta t}{\Delta x}\sum_{\beta=1}^{N}
\frac{\partial F}{\partial p_{\beta}}\{ (p_{0,+}^{\alpha,n})_{\alpha=1}^{N}\}\] 
which is non-negative due to the CFL condition. The proof is now complete. 
\end{proof}
A direct consequence of the previous lemma is the following elementary
but useful discrete comparison principle. 
\begin{lemma}[\textbf{Discrete Comparison Principle}]\label{Prop1}
  Let $(U^n):=(U_i^{\alpha,n})_{\alpha,i}$ and
  $(V^n):=(V_i^{\alpha,n})_{\alpha,i}$ be such that 
\[ \forall n \ge 1, \quad  U^{n+1} \le S^h [U^n] \quad \text{ and } \quad V^{n+1} \ge S^h [V^n].\]
If the CFL condition (\ref{cfl}) is satisfied
  and if $U^0 \le V^0$, then $U^n\le V^n$ for all $n \in \nn$. 
\end{lemma}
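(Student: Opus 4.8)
The plan is to argue by induction on $n$, the single ingredient being the monotonicity of the scheme operator $S^\eps$ already established in Lemma~\ref{lem:monotone}. The base case is immediate: $U^0 \le V^0$ holds by hypothesis.

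For the inductive step, I would assume $U^n \le V^n$ and show $U^{n+1} \le V^{n+1}$. Since the CFL condition~\eqref{cfl} is satisfied, Lemma~\ref{lem:monotone} guarantees that $S^\eps$ is order-preserving, so that $U^n \le V^n$ implies $S^\eps[U^n] \le S^\eps[V^n]$. Combining this with the two defining inequalities $U^{n+1} \le S^\eps[U^n]$ and $S^\eps[V^n] \le V^{n+1}$ produces the chain
\[ U^{n+1} \le S^\eps[U^n] \le S^\eps[V^n] \le V^{n+1}, \]
which is precisely $U^{n+1} \le V^{n+1}$. This closes the induction and yields $U^n \le V^n$ for every $n \in \nn$.

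There is no genuine obstacle here: the statement is simply the reformulation of monotonicity as a discrete comparison principle, and all the substantive work has been carried out in Lemma~\ref{lem:monotone}. The only points worth making explicit are that inequalities between grid functions are understood pointwise at every node of $J^{\Delta x}$, and that order-preservation of $S^\eps$ means each update map $S_\alpha$ (for $i \ge 1$) and $S_0$ (for $i = 0$) in~\eqref{Salpha} is non-decreasing in every one of its arguments, which is exactly what Lemma~\ref{lem:monotone} verifies under~\eqref{cfl}. One should also note that the scheme inequalities are assumed over the relevant range of indices, so that the induction propagates the initial comparison $U^0 \le V^0$ upward through all $n$.
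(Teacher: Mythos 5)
Your proof is correct and is exactly the argument the paper intends: the paper states this lemma as ``a direct consequence'' of Lemma~\ref{lem:monotone} without writing out the induction, and your induction via $U^{n+1} \le S^\eps[U^n] \le S^\eps[V^n] \le V^{n+1}$ is precisely that omitted standard step. Your closing remark about the range of indices is apt, since the hypothesis as printed reads $n\ge 1$ and must be understood as $n\ge 0$ for the first inductive step to go through.
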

\begin{remark}\label{rem:sub-super-schemes}
  The discrete function $(U^n)$ (resp. $(V^n)$) can be seen as a
  \emph{sub-scheme} (resp. \emph{super-scheme}).
\end{remark}
We finally recall how to derive discrete viscosity inequalities for monotone schemes. 
\begin{lemma}[\textbf{Discrete viscosity inequalities}]\label{lem:visc-ineq}
  Let $u^h$ be a solution of \eqref{scheme}-\eqref{eq:cid} with
  $F=F_{A}$ defined in (\ref{fa}).  If $u^h-\varphi$ has a global
  maximum (resp. global minimum) on $\mathcal{G}_h$ at
  $(\overline{t}+\Delta t,\overline{x})$, then
\[\delta_t\varphi(\overline{t},\overline{x})
+\mathcal{H}(\overline{x},D_{+}\varphi(\overline{t},\overline{x}),D_{-}\varphi(\overline{t},\overline{x}))\le 0. 
\quad (\text{resp. } \ge 0)\] 
where 
\[ \mathcal{H}(x,p_+,p_-)=\begin{cases}
\max\{H_{\alpha}^{+}(p_-),H_{\alpha}^{-}(p_+)\} & \text{ if } \overline x\neq0\\
\max\{A,\max_\alpha H_{\alpha}^{-}(p^+_\alpha)\} & \text{ if } \overline{x}=0
\end{cases}\]
and 
\begin{eqnarray*}
D_{+}\varphi(\overline{t},\overline{x}) &=& \begin{cases}
\frac{1}{\Delta x}\{\varphi(\overline{t},\overline{x}+\Delta x)-\varphi(\overline{t},\overline{x})\} & \text{ if } \overline x \neq 0 \\
\left(\frac{1}{\Delta x}\{\varphi^\alpha(\overline{t},\Delta x)-\varphi^\alpha(\overline{t},0)\} \right)_\alpha & \text{ if } \overline x = 0
\end{cases}\\
D_{-}\varphi(\overline{t},\overline{x}) &=&\frac{1}{\Delta x}\{\varphi(\overline{t},\overline{x})-\varphi(\overline{t},\overline{x}-\Delta x)\}\\
\delta_t\varphi(\overline{t},\overline{x})&=&\frac{1}{\Delta t}\{\varphi(\overline{t}+\Delta t,\overline{x})-\varphi(\overline{t},\overline{x})\}.
\end{eqnarray*}
\end{lemma}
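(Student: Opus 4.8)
The plan is to exploit the two structural features of the scheme that this kind of lemma always rests on: its monotonicity, established in Lemma~\ref{lem:monotone} under the CFL condition \eqref{cfl}, and its commutation with the addition of constants. The latter is immediate from the definition \eqref{Salpha}: both $S_\alpha$ and $S_0$ act by subtracting from the central nodal value a quantity depending only on the discrete gradients $p_{i,\pm}^{\alpha,n}$, and these gradients are invariant under adding the same constant to all nodal values. Hence $S^\eps[U+c]=S^\eps[U]+c$ for every $c\in\rr$.

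First I would set $c:=(u^\eps-\varphi)(\overline t+\Delta t,\overline x)$, the value at the maximum point. Since $u^\eps-\varphi$ attains its global maximum over $\mathcal{G}_\eps$ at $(\overline t+\Delta t,\overline x)$, in particular
\[ u^\eps(\overline t,x)\le \varphi(\overline t,x)+c \qquad \text{for all } x\in J^{\Delta x}. \]
Applying the monotone operator $S^\eps$ and using its commutation with constants gives
\[ u^\eps(\overline t+\Delta t,\overline x)=S^\eps[u^\eps(\overline t)](\overline x)\le S^\eps[\varphi(\overline t)+c](\overline x)=S^\eps[\varphi(\overline t)](\overline x)+c. \]
Because equality holds at the maximum point, namely $u^\eps(\overline t+\Delta t,\overline x)=\varphi(\overline t+\Delta t,\overline x)+c$, the constant $c$ cancels and we are left with
\[ \varphi(\overline t+\Delta t,\overline x)\le S^\eps[\varphi(\overline t)](\overline x). \]

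It then remains to unravel $S^\eps[\varphi(\overline t)](\overline x)$ according to \eqref{Salpha}. If $\overline x=i\Delta x\in J_\alpha$ with $i\ge 1$, the right-hand side equals $\varphi(\overline t,\overline x)-\Delta t\max\{H_\alpha^+(D_-\varphi),H_\alpha^-(D_+\varphi)\}$; dividing by $\Delta t$ and rearranging yields exactly $\delta_t\varphi+\mathcal{H}(\overline x,D_+\varphi,D_-\varphi)\le 0$ with $\mathcal{H}(\overline x,\cdot,\cdot)=\max\{H_\alpha^+(\cdot),H_\alpha^-(\cdot)\}$. If $\overline x=0$, the relevant operator is $S_0$ with $F=F_A$, and the same rearrangement gives $\delta_t\varphi+F_A(D_+\varphi)\le 0$; since $F_A(p)=\max\{A,\max_\alpha H_\alpha^-(p_\alpha)\}$ by \eqref{fa}, this is precisely the stated inequality at the junction. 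The super-solution case, where $u^\eps-\varphi$ has a global minimum, is entirely symmetric: one writes $u^\eps\ge\varphi+c$ at time $\overline t$ and all inequalities are reversed after applying monotonicity.

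The only point requiring genuine care — and the place where the hypotheses actually enter — is that passing the inequality through $S^\eps$ needs the scheme to be monotone, hence the CFL condition; the remaining manipulations are purely algebraic. I therefore do not anticipate a substantive obstacle, this being the discrete counterpart of the standard derivation of viscosity inequalities from monotone schemes, with the sole novelty that at $\overline x=0$ the discrete gradient $D_+\varphi$ is vector-valued and $\mathcal{H}(0,\cdot,\cdot)$ is read off directly from $F_A$.
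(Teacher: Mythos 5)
Your argument is correct and is precisely the standard derivation the paper has in mind: the paper itself offers no proof of this lemma (it is explicitly ``recalled'' as a known fact for monotone schemes), and the combination of monotonicity of $S^\eps$ with its commutation with additive constants, followed by reading off $S_\alpha$ and $S_0$ from \eqref{Salpha} with $F=F_A$, is exactly the intended route. Your closing remark correctly identifies the one hypothesis doing real work, namely that the CFL condition must hold on a range of discrete gradients covering both $u^\eps$ and $\varphi$ so that the inequality can be passed through $S^\eps$.
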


\subsection{Stability and Consistency of the scheme}

We first derive a local $L^\infty$ bound for the solution of the scheme.
\begin{lemma}[\textbf{Stability of the numerical
    scheme}]\label{lem:stab}
  Assume that the CFL condition (\ref{cflr}) is satisfied and let
  $u^h$ be the solution of the numerical scheme
  \eqref{scheme}-\eqref{eq:cid}. There exists a
  constant $C_0>0$, such that for all $(t,x) \in \mathcal{G}_h$, 
\begin{equation}\label{eq:estimuo}
\abs{u^h (t,x) -u_0 (x)}\le C_0 t.
\end{equation}
In particular, the scheme is (locally) stable.
\end{lemma}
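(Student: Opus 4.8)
The plan is to derive \eqref{eq:estimuo} directly from the discrete time-derivative estimate already established in Theorem~\ref{Theo1}, rather than from a comparison argument. Set
\[ C_0 := \max(\abs{m^0},\abs{M^0}), \]
with $m^0$ and $M^0$ as in Theorem~\ref{Theo1}. The whole proof then reduces to two points: first, checking that $m^0$ and $M^0$ are finite (so that Theorem~\ref{Theo1} is applicable under the CFL condition \eqref{cflr}); second, telescoping the resulting uniform bound on the discrete time derivative $W_i^{\alpha,n}$ in the time variable.

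First I would verify the finiteness of $m^0$ and $M^0$. Since $u_0$ is globally Lipschitz, say with constant $L_0$, the initial discrete gradients satisfy $\abs{p_{i,\pm}^{\alpha,0}}\le L_0$ for all $i\ge 0$ and all $\alpha$. For $i\ge 1$ the scheme yields $W_i^{\alpha,0}=-\max\{H_{\alpha}^{+}(p_{i,-}^{\alpha,0}),H_{\alpha}^{-}(p_{i,+}^{\alpha,0})\}$; because $H_\alpha^\pm$ are continuous and their arguments lie in $[-L_0,L_0]$, this quantity is bounded uniformly in $i$ and $\alpha$. For $i=0$ we have $W_0^0=-F(p_{0,+}^{1,0},\dots,p_{0,+}^{N,0})$, each argument again lying in $[-L_0,L_0]$, and since $F$ is continuous (indeed piecewise $C^1$) it is bounded on the compact cube $[-L_0,L_0]^N$. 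Hence $m^0$ and $M^0$ are finite.

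With $m^0,M^0$ finite and \eqref{cflr} in force, Theorem~\ref{Theo1}(ii) gives $m^0\le m^n\le M^n\le M^0$ for every $n$, so that $\abs{W_i^{\alpha,n}}\le C_0$ for all $i,\alpha,n$. Telescoping, for $t=n\Delta t$ and $x=x_i^\alpha$,
\[ u^\eps(t,x)-u_0(x)=U_i^{\alpha,n}-U_i^{\alpha,0}=\Delta t\sum_{k=0}^{n-1}W_i^{\alpha,k}, \]
whence $\abs{u^\eps(t,x)-u_0(x)}\le C_0\,n\Delta t=C_0\,t$, which is exactly \eqref{eq:estimuo}; local stability follows since $t$ is bounded on any compact set. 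The only step requiring genuine care is the finiteness of $m^0$ and $M^0$, which rests on the Lipschitz continuity of $u_0$ combined with the mere continuity of the $H_\alpha$ and of $F$ on the relevant compact sets; everything else is an immediate consequence of the time-derivative estimate of Theorem~\ref{Theo1}.
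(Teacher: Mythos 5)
Your proof is correct, but it takes a genuinely different route from the paper. The paper proves the lemma by a barrier argument: it chooses $C_0$ so large that $\bar{U}^{\alpha,n}_i = U^{\alpha,0}_i \pm C_0\, n\Delta t$ are respectively a super- and a sub-scheme, and then invokes the discrete comparison principle (Lemma~\ref{Prop1}), which rests on the monotonicity of the scheme under the CFL condition. You instead observe that Theorem~\ref{Theo1}(ii) already gives the uniform bound $m^0 \le W_i^{\alpha,n}\le M^0$ on the discrete time derivative for all $n$, check that $m^0$ and $M^0$ are finite (correctly, from the Lipschitz bound $\abs{p_{i,\pm}^{\alpha,0}}\le L_0$ and the continuity of $H_\alpha^{\pm}$ and $F$ on compact sets), and telescope in time. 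Both arguments ultimately rely on Theorem~\ref{Theo1} --- yours uses part (ii) directly, while the paper's uses part (i) indirectly, since the gradient estimates are what guarantee that \eqref{cflr} implies \eqref{cfl} and hence monotonicity --- and both produce essentially the same constant: the paper's conditions on $C_0$ amount precisely to $C_0\ge\max(\abs{m^0},\abs{M^0})$. What the paper's barrier argument buys is robustness: it only needs monotonicity and works verbatim for sub- and super-schemes (cf.\ Remark~\ref{rem:sub-super-schemes}), which is the form reused later in Appendix~\ref{app:tech}. What your argument buys is economy: given that the time-derivative estimate is already established two sections earlier, the stability bound is an immediate corollary with no further comparison machinery, and there is no circularity since the proof of Theorem~\ref{Theo1} does not use Lemma~\ref{lem:stab}.
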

\begin{proof}[\textbf{Proof}]
If $C_0$ large enough so that 
\[\left\lbrace
\begin{array}{lll}
C_0 +\max\{H_{\alpha}^{+}(p_{i,-}^{\alpha,0}),H_{\alpha}^{-}(p_{i,+}^{\alpha,0})\} \ge 0, & i\ge 1, &
\alpha=1,\dots,N\\ 
C_0 +F(p_{0,+}^{1,0},p_{0,+}^{2,0},\dots,p_{0,+}^{N,0}) \ge 0, & & 
\end{array}\right.\]
and 
\[\left\lbrace
\begin{array}{lll}
-C_0 +\max\{H_{\alpha}^{+}(p_{i,-}^{\alpha,0}),H_{\alpha}^{-}(p_{i,+}^{\alpha,0})\} \le 0, & i\ge 1, &
\alpha=1,\dots,N\\ 
- C_0 +F(p_{0,+}^{1,0},p_{0,+}^{2,0},\dots,p_{0,+}^{N,0}) \le 0, & & 
\end{array}\right.\]
then $\bar{U}^{\alpha,n}_i = U^{\alpha,0}_i + C_0 n \Delta t$ is a
super-scheme and $\bar{U}^{\alpha,n}_i = U^{\alpha,0}_i - C_0 n \Delta
t$ is a sub-scheme (see Remark~\ref{rem:sub-super-schemes}). The
discrete comparison principle, Lemma~\ref{Prop1}, then implies
\[ |U^{\alpha,n}_i  -U^{\alpha,0}_i | \le C_0 n \Delta t \]
which is the desired inequality. This achieves the proof. 
\end{proof}
Another condition to satisfy convergence of the numerical scheme
\eqref{scheme} towards the continuous solution of
\eqref{eq:continuous} is the consistency of the scheme (which is
obvious in our case). In the statement below, we use the short hand
notation~\eqref{short} introduced in Appendix.
\begin{lemma}[\textbf{Consistency of the numerical scheme}]\label{lem:cons}
  Under the assumptions on the Hamiltonians \eqref{H}, the finite
  difference scheme is consistent with the continuous problem
  \eqref{eq:continuous}, that is to say for any smooth function
  $\varphi(t,x),$ we have locally uniformly
\[ \frac{S^h[\varphi](s,y) -\varphi(s,y)}{\Delta t} \to H_\alpha (\varphi_x(t,x)) \quad \text{as}\quad 
\mathcal{G}_h \ni (s,y) \to (t,x)\]
if $x \in J_\alpha \setminus \{0\}$, and 
\[ \frac{S^h[\varphi](s,y) -\varphi(s,y)}{\Delta t} \to F
\left(\frac{\partial \varphi}{\partial x_{1}},\dots,\frac{\partial
  \varphi}{\partial x_{N}}(t,0)\right) \quad \text{as}\quad \mathcal{G}_h
\ni (s,y) \to (t,0).\]
\end{lemma}

\subsection{Convergence of the numerical scheme}

In this subsection, we present a sketch of the proof of Theorem \ref{Conv-gen}. 

\begin{proof}[Sketch of the proof of Theorem~\ref{Conv-gen}]
  Let $T > 0$ and $h := (\Delta t, \Delta x)$ satisfying the CFL
  condition (\ref{cflr}).  We recall that
  \[u^h (0,x) = u(0,x) \quad \text{ for } \quad x \in
  \mathcal{G}_h. \] 
  We consider $\overline{u}$ and $\underline{u}$ respectively defined
  as
\[\overline{u}(t,y)=
\limsup_{\substack{h\to 0\\\mathcal{G}_{h}\ni(t',y')\to (t,y)}}u^{h}(t',y'), 
\qquad \underline{u}(t,y)=
\liminf_{\substack{h\to 0\\\mathcal{G}_{h}\ni(t',y')\to (t,y)}}u^{h}(t',y').\]
By construction, we have $\underline{u} \le \overline{u}$.  Since the
scheme is monotone (Lemma~\ref{lem:monotone}), stable
(Lemma~\ref{lem:stab}) and consistent (Lemma~\ref{lem:cons}), we can
follow \cite{CL,BS,CLM} we can show that $\underline{u}$
(resp. $\overline{u}$) is a relaxed viscosity super-solution
(resp. viscosity sub-solution) of equation
\eqref{eq:main}-\eqref{eq:ic}, see for example the proof of \cite[Theorem 1.8]{CLM}. Using Theorem \ref{Theo5}, we know
that $\underline{u}$ (resp. $\overline{u}$) is a viscosity
super-solution (resp. sub-solution) of
\eqref{eq:continuous}-\eqref{eq:ic}. 
Moreover, \eqref{eq:estimuo} implies that 
\[ \overline{u} (0,x) \le u_0 (x) \le \underline{u} (0,x). \]
 The comparison principle (see
Theorem \ref{Theo4}) then implies that
\[\overline{u} \le u \le \underline{u}\] 
which achieves the proof. 
\end{proof}

\section{Study of the reduced minimal action}
\label{sec:optimaltraject}

In this section, we consider that the Hamiltonians $H_{\alpha}$ satisfy \eqref{Jconvex1}. We study the reduced minimal action $\mathcal{D}_{0}$ which replace the classical term $\frac{(x-y)^{2}}{2\epsilon}$ in the doubling variable method. This function allows us to prove that the error estimate is of order $(\Delta x)^{\frac{1}{2}}$.

\subsection{Reduction of the study}
We start this section by the following remark, the analysis can be reduced to the case $(s,t)=(0,1).$ Precisely, using the fact that the Hamiltonian does not depend on time and is homogeneous with respect to the state, the reader can check that a change of variables in time yields the following Lemma.
\begin{lemma}
For all $y,x \in J$ and $s<t,$ we have
\[
\mathcal{D}(s,y;t,x)=(t-s)\mathcal{D}\bigg(0,\frac{y}{t-s};1, \frac{x}{t-s}\bigg).
\]
where
\[
\mathcal{D}(s,y;t,x)=\inf_{\substack{{X\in \mathcal{A}(s,y;t,x)}}}\bigg\{\int_s^t L(X(\tau),\dot{X}(\tau))d\tau\bigg\}.
\]
\end{lemma}
This is the reason why we consider the reduced minimal action $\mathcal{D}_0: J^2\rightarrow \mathbb{R}$ defined by
\begin{equation}
\label{J20}
\mathcal{D}_0(y,x)=\mathcal{D}(0,y;1,x).
\end{equation}
We also need the following lower bound on $\mathcal{D}.$
\begin{lemma}
\label{JLem7}
Assume (B0). Then
\[
\mathcal{D}(s,y;t,x)\ge \frac{\gamma}{2(t-s)}d^2(x,y)-A(t-s)
\]
 where $\gamma$ is defined in (B0).

Moreover,
\[
\mathcal{D}(s,x;t,x)\le L_A(0)(t-s).
\]
\end{lemma}
\begin{proof}
We only prove the first inequality since the other inequality is elementary. As $L'_{\alpha}(0)=0$, and $L_{\alpha}(0)\ge L_{A}(0)=-A$, we have
\[L_{\alpha}(p)\ge \frac{\gamma}{2}p^2+L_{\alpha}^{\prime}(0)p+L_{\alpha}(0)\ge \frac{\gamma}{2}p^2-A.\]
Thus, we can write for $X(.)\in \mathcal{A}(s,y;t,x),$
\[
\int_s^tL(X(\tau),\dot{X}(\tau))d\tau\ge -A(t-s)+\frac{\gamma}{2}\int_s^t(\dot{X}(\tau))^{2}d\tau.
\]
Then Jensen's inequality allows us to conclude.
\end{proof}
\subsection{Piecewise linear trajectories}
We are going to see that the infimum defining the minimal action can be computed among piecewise linear trajectories. In order to state a precise statement, we first introduce that optimal curves are of two types depending on the position of $y$ and $x$ on the same branch or not: if they are, then the trajectories are of two types: either they reach the junction point, or they stay in a branch and are straight lines. For $y\in J_{\beta},$ $x\in J_{\alpha}$ with $\beta\neq \alpha,$ the trajectories can spend some time at the junction point.

\begin{lemma}
The infimum defining the reduced minimal action $\mathcal{D}_0$ can be computed among piecewise linear trajectories; more precisely for all $y,x\in J,$
\begin{equation}
\label{J21}
\mathcal{D}_0(y,x)=
\left\lbrace
\begin{array}{ll}
\mathcal{D}_{junction}(y,x) & \text{ if } \alpha\neq \beta,\\
\min(L_{\alpha}(x-y),\mathcal{D}_{junction}(y,x)) & \text{ if } \alpha=\beta,
\end{array}
\right.
\end{equation}
where for $x\in J_{\alpha}, y\in J_{\beta}$
\begin{equation}
\label{J25}
\mathcal{D}_{junction}(y,x)=\inf_{\substack{{0\le t_1\le t_2 \le 1}}}\bigg\{t_{1}L_{\beta}\bigg(\frac{-y}{t_1}\bigg)+(t_2-t_1)L_A(0)+(1-t_2)L_{\alpha}\bigg(\frac{x}{1-t_2}\bigg)\bigg\}.
\end{equation}
\end{lemma}
\begin{proof}
We write $\mathcal{D}_0=\inf_{\substack{{X\in \mathcal{A}_0(y,x)}}}\Lambda(X),$ where $\Lambda(X)=\int_{0}^{1}L(X(\tau), \dot{X}(\tau))d\tau.$ In order to prove the lemma, it is enough to consider a curve $X\in \mathcal{A}(0,y;1,x)$ and prove that
\[\Lambda(X)\ge \min(L_{\alpha}(x-y), D_{junction}(y,x)).\]
For $\alpha\neq \beta,$ the trajectories can spend some time at the junction point, hence we can write
\begin{equation*}
\begin{split}
\mathcal{D}_0(y,x) & = \inf_{\substack{{X(0)=y}\\{X(1)=x}}}\bigg\{\int_0^{t_1}L_{\beta}(\dot{X}(\tau))d\tau+\int_{t_1}^{t_2}L(X(\tau), \dot{X}(\tau))d\tau+\int_{t_2}^{1}L_{\alpha}(\dot{X}(\tau))d\tau\bigg\}
\\
& 
\ge \inf_{\substack{0\le t_1\le t_2\le 1}}\bigg\{\inf_{\substack{{X(0)=y}\\{X(t_1)=x}}}\int_0^{t_1}L_{\beta}(\dot{X}(\tau))d\tau+\inf_{\substack{{X(t_1)=0}\\{X(t_2)=0}}}\int_{t_1}^{t_2}L(X(\tau), \dot{X}(\tau))d\tau \\ & +\inf_{\substack{{X(t_2)=0}\\{X(1)=x}}}\int_{t_2}^{1}L_{\alpha}(\dot{X}(\tau))d\tau\bigg\}
\end{split}
\end{equation*}
then using that $L\geq L_{A}$ for the second term and Jensen's inequality for all terms, we conclude that 
\[\mathcal{D}_0(y,x)\ge \mathcal{D}_{junction}(y,x).\]
Now for $\alpha=\beta,$ we can deduce from the preceding that 
\[\mathcal{D}_0(y,x)\ge \min\left(\mathcal{D}_{junction}(y,x),\inf_{\substack{{X(0)=y}\\{X(1)=x}}}\int_{0}^{1}L_{\alpha}(\dot{X}(\tau))d\tau \right).\]
Then, by Jensen's inequality once again, we can deduce \eqref{J21}. This ends the proof. 
\end{proof}

In view of \eqref{J21}, we see that the study of $\mathcal{D}_0$ can now be reduced to the study of $\mathcal{D}_{junction}.$

\subsection{Study of $\mathcal{D}_{junction}$}
We introduce a simpler notation of $\mathcal{D}_{junction}$ defined in \eqref{J25},
\begin{equation}
\label{J23}
\mathcal{D}_{junction}(y,x)=\inf_{\substack{0\le t_1\le t_2\le 1}} G(t_1,t_2,y,x),
\end{equation}
where  $$G(t_{1},t_{2},y,x)= t_{1}L_{\beta}\bigg(\frac{-y}{t_1}\bigg)+(t_2-t_1)L_A(0)+(1-t_2)L_{\alpha}\bigg(\frac{x}{1-t_2}\bigg).$$
As in \cite{IMZ}, for $(y,x)\in J_{\beta}^{*}\times J_{\alpha}^{*}$  the function $(t_{1},t_{2})\rightarrow G(t_{1},t_{2},y,x)$ is stricly convex on $(0,1)\times (0,1)$. Indeed, for $t_{1},t_{2}\in (0,1)$,
we compute 
$$D^{2}G(t_{1},t_{2},y,x)=\frac{L''_{\beta}\left(\frac{-y}{t_{1}}\right)}{t_{1}}V_{y}^{T}V_{y}+\frac{L''_{\alpha}\left(\frac{x}{1-t_{1}}\right)}{1-t_{2}}V_{x}^{T}V_{x}\geq 0,$$
where $V_{y}=(\frac{-y}{t_{1}},0,1,0)$ and $V_{x}=(0,\frac{x}{1-t_{1}},0,1)$ and in particular, we have
$$\frac{\partial^2}{\partial t_1^2}G(t_{1},t_{2},y,x)= \frac{y^{2}}{t_{1}^{3}}L''_{\beta}\left(\frac{-y}{t_{1}}\right) > 0,$$
and 
$$\frac{\partial^2}{\partial t_2^2}G(t_{1},t_{2},y,x)= \frac{x^{2}}{(1-t_{2})^{3}}L''_{\alpha}\left(\frac{x}{1-t_{1}}\right) > 0.$$
So we deduce that for $(y,x)\in J_{\beta}^{*}\times J_{\alpha}^{*}$, if the function $(t_{1},t_{2})\rightarrow G(t_{1},t_{2},y,x)$ admits a critical point, then it reaches its infimum at this point, else it reaches its infimum at the boundary.  
\begin{lemma}
\label{JLem1}
Let $(y,x)\in J,$ and $D_{junction}(y,x)$ as in \eqref{J25}. We have the following equivalences for the infimum,
\[
\left\lbrace
\begin{array}{ll}
x=0  \Leftrightarrow t_2=1,\\
y=0  \Leftrightarrow t_1=0.
\end{array}
\right.
\]
\end{lemma}

\begin{proof}
It is a direct consequence of the expression \eqref{J25}.
\end{proof}
\begin{defi}[Numbers $\xi_{l}^{+}, \xi_{l}^{-}$]
 We define $\xi_{l}^{-},$ $\xi_{l}^{+}$ thanks to the following function (for $l\in \{1,...N\}$)
\begin{equation}
\label{J33}
K_{l}(x)=L_{l}(x) -xL_{l}^{\prime}(x)-L_A(0).
\end{equation}
We define $(K_{l}^{-})^{-1}$ (resp. $(K_{l}^{+})^{-1}$) as the inverse of the function $K_{l}$ restricted to $(-\infty,0]$ (resp. $[0, +\infty)$), in fact one can write
\[K_{l}^{\prime}(x)=-xL_{l}^{\prime\prime}(x)<0 \text{ on } (0,+\infty)\quad (\text{ resp. }>0 \text{ on } (-\infty,0)).\]
More precisely, we define  $\xi_{l}^{\pm}=(K_{l}^{\pm})^{-1}(0).$
\end{defi}
\begin{lemma}[\textbf{Explicit expression of $\mathcal{D}_{junction}(y,x)$}]
\label{JLem2}
It exists a unique function $\tau:J\times J \rightarrow (0,1)$ of class $\mathcal{C}^{1}$ such that
for $(y,x)\in J_{\beta}\times J_{\alpha},$ we have
\begin{equation}
\label{J26}
\mathcal{D}_{junction}(y,x)=
\left\lbrace
\begin{array}{ll}
\tau(y,x) L_{\beta}\left(\frac{-y}{\tau(y,x)}\right)+(1-\tau(y,x))L_{\alpha}\left(\frac{x}{1-\tau(y,x)}\right)& \text{ if } (y,x)\in (J_{\beta}^{\star}\times J_{\alpha}^{\star})\setminus \Delta_{\beta\alpha},\\
-yL_{\beta}^{\prime}(\xi_{\beta}^{-})+xL_{\alpha}^{\prime}(\xi_{\alpha}^{+})+L_A(0) & \text{ if } (y,x)\in \Delta_{\beta\alpha},\\
L_{\alpha}(x)  & \text{ if } y=0 \text{ and } x>\xi_{\alpha}^{+},\\
L_{\beta}(-y) & \text { if } x=0 \text{ and } y>-\xi_{\beta}^{-},
 \end{array}
\right.
\end{equation}
where \[\Delta_{\beta\alpha}=\bigg\{(y,x)\in J_{\beta}\times J_{\alpha}, \quad \frac{x}{\xi_{\alpha}^{+}}-\frac{y}{\xi_{\beta}^{-}}\le 1 \bigg\}.\]
\end{lemma}

\begin{figure}
\begin{center}
  \includegraphics[width=6cm]{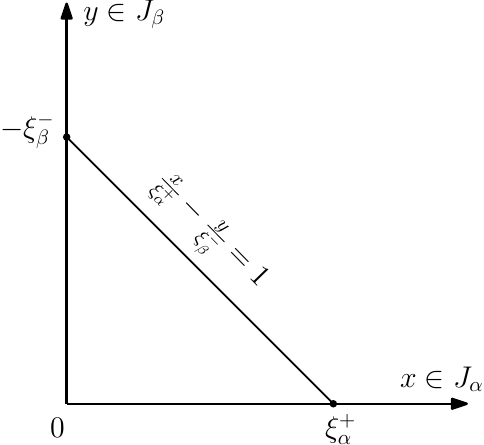}
  \caption{Illustration of the several subsets for $\mathcal{D}_{junction}$ for $\alpha\neq \beta$.}
  \label{alphaneqbeta}
  \end{center}
  \end{figure}
  
  \begin{figure}
\begin{center}
  \includegraphics[width=6cm]{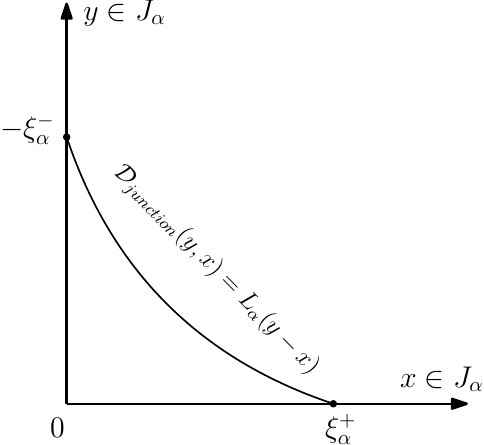}
  \caption{Illustration of the several subsets for $\mathcal{D}_{0}$ for $\alpha= \beta$.}
  \label{alpha=beta}
  \end{center}
  \end{figure}

We have a different expression of $\mathcal{D}_{junction}$ on each subset  of the previous Lemma (see Figure \ref{alphaneqbeta}).  
\begin{proof}
Writing the optimal conditions of $G$ associated with the infimum in \eqref{J23}, we have
\begin{equation}
\label{J24}
\left\lbrace
\begin{array}{ll}
\frac{y}{t_1}L_{\beta}^{\prime}\left(\frac{-y}{t_1}\right)-L_A(0)+L_{\beta}\left(\frac{-y}{t_1}\right)=0,\\ \\
-\frac{x}{1-t_2}L_{\alpha}^{\prime}\left(\frac{x}{1-t_2}\right)-L_A(0)+L_{\alpha}\left(\frac{x}{1-t_2}\right)=0,
\end{array}
\right.
\end{equation}
where $t_1$ and $t_2$ are the quantities realizing the minimum.
Hence from \eqref{J24}, we deduce
\[K_{\beta}\left(-\frac{y}{t_1}\right)=0=K_{\alpha}\left(\frac{x}{1-t_2}\right).\]
But $K_{\beta}$ is a bijection on $(-\infty,0),$ and so is $K_{\alpha}$ on $(0,+\infty).$ Therefore, setting $(K_{\beta}^{-})^{-1}(0):=\xi_{\beta}^{-},$ and 
$(K_{\alpha}^{+})^{-1}(0):=\xi_{\alpha}^{+},$ we deduce for $(y,x)\in \Delta_{\beta \alpha}\backslash \{xy=0\},$ 
\begin{equation*}
\begin{split}
\mathcal{D}_{junction}(y,x)&=\frac{-y}{\xi_{\beta}^-}L_{\beta}(\xi_{\beta}^{-})+\frac{x}{\xi_{\alpha}^{+}}L_{\alpha}(\xi_{\alpha}^{+})+\left(1-\frac{x}{\xi_{\alpha}^+}+\frac{y}{\xi_{\beta}^{-}}\right)L_A(0)\\
&= -yL_{\beta} ^{\prime}(\xi_{\beta} ^{-})+xL_{\alpha}^{\prime}(\xi_{\alpha}^{+})+L_A(0).
\end{split}
\end{equation*}
Now, for $x=0$ and $y<-\xi_{\beta}^{-}$, using the first condition of \eqref{J24}, we deduce that
$$\mathcal{D}_{junction}(y,0)=-yL_{\beta} ^{\prime}(\xi_{\beta} ^{-})+L_A(0).$$
For $x=0$ and $y\geq  -\xi_{\beta}^{-},$ we deduce from Lemma \ref{JLem1}, that $t_2=1$.
Using the first optiml condition in \eqref{J24}, we have $K_{\beta}\left(\frac{-y}{t_{1}} \right)=0$ so $t_{1}=\frac{-y}{\xi_{\beta}^{-}}\geq 1$. We deduce that the optimal condition must be satisfied at the boundary of the set $\{0\leq t_{1}\leq 1\}$. Here using \eqref{J25}, we have $t_{1}=1$, so
 \[D_{junction}(y,0)=L_{\beta}(-y).\]
Similarly, for $y=0$ and $x< \xi_{\alpha}^{+},$
$$\mathcal{D}_{junction}(y,x) = xL_{\alpha}^{\prime}(\xi_{\alpha}^{+})+L_A(0).$$
For $y=0$ and $x\geq \xi_{\alpha}^{+},$ we deduce that 
\[\mathcal{D}_{junction}(0,x)=L_{\alpha}(x).
\]
In all other cases, that is to say for $(y,x)\in (J_{\beta}^{\star}\times J_{\alpha}^{\star})\setminus \Delta_{\beta \alpha},$ the infimum of $G$ is attained at the boundary of $\{0\leq t_{1} \leq t_{2}\leq 1 \}$, here for some $t_{1}=t_{2}=\tau\in(0,1).$ Hence we have
\[
\mathcal{D}_{junction}(y,x)=\inf_{\substack{0< \tau< 1}}\bigg\{\tau L_{\beta}\left(\frac{-y}{\tau}\right)+(1-\tau)L_{\alpha}\left(\frac{x}{1-\tau}\right)\bigg\}
\]
Once again, writing the optimal conditions for $G(\tau,\tau,y,x),$ we deduce that
\begin{equation}
\label{J34}
K_{\beta}\left(\frac{-y}{\tau}\right)=K_{\alpha}\left(\frac{x}{1-\tau}.\right).
\end{equation}
We define \[\tilde{G}(\tau,y,x)=K_{\beta}\left(\frac{-y}{\tau}\right)-K_{\alpha}\left(\frac{x}{1-\tau}\right).\]
Deriving \[\frac{\partial \tilde{G}}{\partial \tau}=K_{\beta}^{\prime}\left(\frac{-y}{\tau}\right)\frac{y}{\tau^2}-K_{\alpha}^{\prime}\left(\frac{x}{1-\tau}\right)\frac{x}{(1-\tau)^2}>0 \quad \mbox{ for } (y,x)\in(J_{\beta}^{\star}\times J_{\alpha}^{\star})\setminus \Delta_{\beta \alpha},\]
by implicit function theorem, we deduce that there exists a unique $\tilde{\tau}\in C^1(0,1)$ satisfying $\tilde{G}(\tilde{\tau},y,x)=0.$ The proof is thus complete.
\end{proof}
\begin{lemma}[\textbf{Continuity of $\mathcal{D}_{junction}$}]
\label{Jcontinuity}
The function $\mathcal{D}_{junction}$ is continuous in $J^2.$
\end{lemma}
\begin{proof}
From \eqref{J26}, we already know that $\mathcal{D}_{junction}\in C((J_{\beta}^{\star}\times J_{\alpha}^{
\star})\backslash\Delta_{\beta\alpha})\cup C(\Delta_{\beta\alpha}\cup\{ x=0\}\cup \{ y=0\}).$ Therefore in order to prove that $\mathcal{D}_{junction}\in C(J_{\beta}\times J_{\alpha}),$ it is sufficient to prove that for any given sequence $(y^k,x^k)\in (J_{\beta}^{\star}\times J_{\alpha}^{\star})\setminus \Delta_{\beta\alpha}$ such that $(y^k,x^k)\rightarrow (y,x),$ where $(y,x)\in\bar{\Delta}:=\{\frac{x}{\xi_{\alpha}^{+}}-\frac{y}{\xi_{\beta}^{-}} = 1\}\cup\{ x\geq \xi_{\alpha}^{+}\}\cup \{ y\geq -\xi_{\beta}^{-}\},$ we have  
\[
\mathcal{D}_{junction}(y^k,x^k)\rightarrow\mathcal{D}_{junction}(y,x).
\]
Since the sequence $\{\tau(y^{k},x^{k})\}$ is bounded, we can deduce that there exists a sub-sequence such that $\tau(y^k,x^k)\rightarrow \tau^0.$ We distinguish the following cases.
\subparagraph{Case 1: $\tau^0\in(0,1).$}   By continuity of $K_{l},$ we have \begin{equation}
\label{J446633}
K_{\alpha}\left(\frac{x}{1-\tau^0}\right)=K_{\beta}\left(\frac{-y}{\tau^0}\right).
\end{equation} 

If $x=0,$ we have as $K_{\alpha}(0)>0$ and $(K_{\beta}^{-})^{-1}$ is increasing
$$\frac{y}{\tau^0}=-(K_{\beta}^{-})^{-1}(K_{\alpha})(0)< -(K_{\beta}^{-})^{-1} (0)=-\xi_{\beta}^{-},$$ 
hence deduce that $(y,0)\notin \bar{\Delta} ,$ so this case is not possible.

Similarly, if $y=0,$ we have 
\[
\frac{x}{1-\tau^0}=(K_{\alpha}^{+})^{-1}(K_{\beta})(0)< (K_{\alpha}^{+})^{-1} (0) =\xi_{\alpha}^{+},
\]
hence deduce that $(0,x)\notin \bar{\Delta} ,$ so this case is not possible.

Now if $(y,x)\in (J_{\beta}^{\star}\times J_{\alpha}^{\star})\cap \bar{\Delta},$ then $\frac{x}{\xi_{\alpha}^{+}}-\frac{y}{\xi_{\beta}^{-}} = 1$ and passing to the limit, we have \eqref{J446633}. We know that $K_{\alpha}(\xi_{\alpha}^{+})=K_{\beta}(\xi_{\beta}^{-})=0$, so if we set $\bar{\tau}=-\frac{y}{\xi_{\beta}^{-}}=1-\frac{x}{\xi_{\alpha}^{+}}$ so $1-\bar{\tau}=\frac{x}{\xi_{\alpha}^{+}}$, we have 
\begin{equation}
\label{Jborddelta}
K_{\beta}\left(\frac{-y}{\bar{\tau}}\right)=0=K_{\alpha}\left( \frac{x}{1-\bar{\tau}} \right).
\end{equation}
By uniqueness of $\tau$ satisfying \eqref{J34}, we deduce that $\tau^{0}=\bar{\tau}$.
So we have \[\mathcal{D}_{junction}(y^{k},x^{k})\rightarrow -yL_{\beta}^{\prime}(\xi_{\beta}^{-})+xL_{\alpha}^{\prime}(\xi_{\alpha}^{+})+L_A(0)=\mathcal{D}_{junction}(y,x).\]

\subparagraph{Case 2: $\tau^0=0.$} In this case using Lemma \ref{JLem1}, $y^k\rightarrow y=0,$ so $x\geq \xi_{\alpha}^{+}$ and with \eqref{J34} we deduce that 
\begin{equation}
\label{Jinv1}
\frac{-y^k}{\tau(y^k,x^k)}=(K_{\beta}^{-})^{-1}\left(K_{\alpha}\left(\frac{x^{k}}{1-\tau(y^{k},x^{k})}\right)\right)\rightarrow (K_{\beta}^{-})^{-1}\left(K_{\alpha}\left(x\right)\right).
\end{equation}
Therefore $\mathcal{D}_{junction}(y^{k},x^{k})\rightarrow L_{\alpha}(x)=\mathcal{D}_{junction}(0,x).$

\subparagraph{Case 3: $\tau^0=1.$} In this case, $x^k\rightarrow x=0.$ Arguing as in Case 2, we deduce that $y\ge \xi_{\beta}^{-},$ and
\begin{equation}
\label{Jinv2}
\frac{x^{k}}{1-\tau(y^{k},x^{k})}=(K_{\alpha}^{+})^{-1}\left(K_{\beta}\left(\frac{-y^k}{\tau(y^k,x^k)}\right)\right)\rightarrow  (K_{\alpha}^{+})^{-1}\left(K_{\beta}\left(-y\right)\right).
\end{equation}
Therefore, $\mathcal{D}_{junction}(y^{k},x^{k}) \rightarrow L_{\beta}(-y)=\mathcal{D}_{junction}(y,x).$

The proof is thus complete.
\end{proof}
\begin{lemma}
\label{JLem5}
The function $\mathcal{D}_{junction}$ is $C^{1}$ in $J^2$ and
for $(y,x)\in J_{\beta}\times J_{\alpha},$ we have
\begin{equation}
\label{J27}
\partial_{x}\mathcal{D}_{junction}(y,x)=
\left\lbrace
\begin{array}{ll}
L_{\alpha}^{\prime}(\frac{x}{1-\tau})& \text{ if } (y,x)\in (J_{\beta}^{\star}\times J_{\alpha}^{\star})\setminus \Delta_{\beta\alpha},\\
L_{\alpha}^{\prime}(\xi_{\alpha}^{+})& \text{ if } (y,x)\in \Delta_{\beta\alpha},\\
L_{\alpha}^{\prime}(x)  & \text{ if } y=0 \text{ and } x>\xi_{\alpha}^{+},\\
L_{\alpha}^{\prime}\circ (K_{\alpha}^{+})^{-1}\circ K_{\beta}(-y) & \text { if } x=0 \text{ and } y>-\xi_{\beta}^{-},
 \end{array}
\right.
\end{equation}
and 
\begin{equation}
\label{J28}
\partial_{y}\mathcal{D}_{junction}(y,x)=
\left\lbrace
\begin{array}{ll}
-L_{\beta}^{\prime}(\frac{-y}{\tau})& \text{ if } (y,x)\in (J_{\beta}^{\star}\times J_{\alpha}^{\star})\setminus \Delta_{\beta\alpha},\\
-L_{\beta}^{\prime}(\xi_{\beta}^{-})  &
 \text{ if } (y,x)\in \Delta_{\beta\alpha},
 \\
-L_{\beta}^{\prime}\circ(K_{\beta}^{-})^{-1}\circ K_{\alpha}(x)& \text{ if } y=0 \text{ and } x>\xi_{\alpha}^{+},\\
-L_{\beta}^{\prime}(-y) & \text { if } x=0 \text{ and } y>-\xi_{\beta}^{-}.
 \end{array}
\right.
\end{equation}
\end{lemma}
\begin{proof}
We compute the partial derivatives in domains where the function is naturally of class $C^{1}$ using that the function $\tau$ is  continuously differentiable in $(0,1)^{2}$ and using \eqref{J34}. We prove the continuity of the partial derivatives using the same proof as Lemma \ref{Jcontinuity}.
\end{proof}

\subsection{Compatibility condition}
In this subsection, we prove a compatibility result, which will be used in deriving error estimates.
Let us introduce the following shorthand notation
$$H(x,p)=\left\{\begin{array}{ll}
H_{\alpha}(p) & \mbox{ if } x\in J_{\alpha}^{*} \\
F_{A}(p) & \mbox{ if } x=0.
\end{array}\right.$$

\begin{remark}
\label{JRem1}
In $J_{\alpha}\times J_{\alpha}$, we give a description of $\{\mathcal{D}_{junction}(y,x)=L_{\alpha}(x-y)\}\cap \Delta_{\alpha\alpha}$ using \cite{IMZ}, see Figure \ref{alpha=beta}. We have
\begin{equation*}
\left\lbrace
\begin{array}{ll}
\mathcal{D}_{junction}(0,\xi_{\alpha}^{+})=\xi_{\alpha}^{+}L_{\alpha}^{\prime}(\xi_{\alpha}^{+})+L_{A}(0)=L_{\alpha}(\xi_{\alpha}^{+}),\\
\mathcal{D}_{junction}(-\xi_{\alpha}^{-},0)=\xi_{\alpha}^{-}L_{\alpha}^{\prime}(\xi_{\alpha}^{-})+L_{A}(0)=L_{\alpha}(\xi_{\alpha}^{-}).
\end{array}
\right.
\end{equation*}
This means that the functions $\mathcal{D}_{junction}$ and $(y,x)\rightarrow L_{\alpha}(x-y)$ coincide at the same points $X_{\alpha}=(0,\xi_{\alpha}^{+})$ and $Y_{\alpha}=(-\xi_{\alpha}^{-},0).$ Therefore we have
\[L_{\alpha}(x-y)<\mathcal{D}_{junction}(y,x)\quad \text{ on the open line segment } ]X_{\alpha},Y_{\alpha}[\]
because $\mathcal{D}_{junction}$ is linear and $L_{\alpha}$ is strictly convex as a function of $y-x.$

The function $(y,x)\mapsto L_{\alpha}(x-y)-\mathcal{D}_{junction}(y,x)$ being convex because $\mathcal{D}_{junction}(y,x)$ is linear, we can consider the convex set
\[K^{\alpha}=\{(y,x)\in J_{\alpha}\times J_{\alpha}, \quad L_{\alpha}(x-y)\le \mathcal{D}_{junction}(y,x)\}.\]
Then the set 
\[\Gamma^{\alpha}=\{(y,x)\in \Delta_{\alpha\alpha},\quad \mathcal{D}_{junction}(y,x)=L_{\alpha}(x-y)\}\]
is contained in the boundary of the convex set $K^{\alpha}.$ More precisely, we have
\[\Gamma^{\alpha}=((\partial K^{\alpha})\cap \Delta_{\alpha\alpha})\subset J_{\alpha}\times J_{\alpha}\]
which shows that $\Gamma^{\alpha}$ is a curve which contains the points $X_{\alpha}$ and $Y_{\alpha}$.
\end{remark}

\begin{theorem}
\label{JthCC}
Assume the Hamiltonians are convex, with Legendre Fenchel transform satisfying (B0). Then for all $(x,y)\in J\times J \backslash \bigcup\limits_{\alpha\in \{1,\dots, N \}} \Gamma^{\alpha},$ (i.e., everywhere except on the curves where $\mathcal{D}_{0}$ is not $C^{1}$), we have
\[H(y,-\partial_{y}\mathcal{D}_{0}(y,x))=H(x,\partial_{x}\mathcal{D}_{0}(y,x)).\]
\end{theorem}

\begin{proof}[Proof of Theorem \ref{JthCC}]
First, notice that in the interior of $K^{\alpha}$ (i.e., in the regions where $\mathcal{D}_{0}(y,x)=L_{\alpha}(x-y)$), we have the result as $$H(y,-\partial_{y}\mathcal{D}_{0}(y,x))=H_{\alpha}(L'_{\alpha}(x-y))=H(x,\partial_{x}\mathcal{D}_{0}(y,x)).$$

Now we prove the result in the regions where $\mathcal{D}_{0}=\mathcal{D}_{junction}$. 
 We distinguish different regions of $J_{\beta}\times J_{\alpha},$ defined in the expressions of $\partial_{x}\mathcal{D}_{junction}$ and $\partial_{y}\mathcal{D}_{junction}$ in \eqref{J27}-\eqref{J28}. Let us first point out that we have the following assertion
\begin{equation}
\label{J32}
H_{\alpha}(p)+L_{\alpha}(q)=pq \Leftrightarrow q\in \partial H_{\alpha}(p),
\end{equation}
where $\partial H_{\alpha}(p)$ is the convex subdifferential of $H_{\alpha}(p).$

We distinguish several cases.
\subparagraph{\textbf{Case 1 $(y,x)\in (J_{\beta}^{\star}\times J_{\alpha}^{\star})\setminus \Delta_{\beta\alpha}.$}} From \eqref{J32}, on one hand,  and from \eqref{J28} we have 
 \[H_{\beta}\left(L_{\beta}^{\prime}\left(\frac{-y}{\tau}\right)\right)=\frac{-y}{\tau}L_{\beta}^{\prime}\left(\frac{-y}{\tau}\right)-L_{\beta}\left(\frac{-y}{\tau}\right).\]
From \eqref{J33}, we have then  $H_{\beta}\left(L_{\beta}^{\prime}\left(\frac{-y}{\tau}\right)\right)=-K_{\beta}\left(\frac{-y}{\tau}\right)-L_{A}(0).$ 

On the other hand, and from \eqref{J27}
\[
H_{\alpha}\left(L_{\alpha}^{\prime}\left(\frac{x}{1-\tau}\right)\right)=\frac{x}{1-\tau}L_{\alpha}^{\prime}\left(\frac{x}{1-\tau}\right)-L_{\alpha}\left(\frac{x}{1-\tau}\right), 
\]
similarly, from \eqref{J33}, we deduce that $H_{\alpha}\left(L_{\alpha}^{\prime}\left(\frac{x}{1-\tau}\right)\right)=-K_{\alpha}\left(\frac{x}{1-\tau}\right)-L_{A}(0).$
Hence, from \eqref{J34}, the compatibility condition.

\subparagraph{\textbf{Case 2 $(y,x)\in (J_{\beta}^{\star}\times J_{\alpha}^{\star})\cap \Delta_{\beta\alpha}.$}} We argue as in Case 1, one can deduce that
\begin{equation*}
\begin{split}
H_{\beta}(L_{\beta}^{\prime}(\xi_{\beta}^{-}))&=-K_{\beta}(\xi_{\beta}^{-})-L_{A}(0)=A\\
H_{\alpha}(L_{\alpha}^{\prime}(\xi_{\alpha}^{+}))&=-K_{\alpha}(\xi_{\alpha}^{+})-L_{A}(0)=A
\end{split}
\end{equation*}
From the definition of $\xi_{\alpha}^{+}$ and $\xi_{\beta}^{-},$ one can deduce the compatibility condition.

\begin{remark}
We deduce that the functions $\pi_{\alpha}^{+}, \pi_{\beta}^{-}$ defined in \cite{IM} satisfy 
$$\pi_{\alpha}^{+}(A)=L_{\alpha}^{\prime}(\xi_{\alpha}^{+})\quad \mbox{ and } \quad  \pi_{\beta}^{-}(A)=L'_{\beta}(\xi_{\beta}^{-}).$$
\end{remark}

\paragraph{\textbf{Case 3 $y=0$ and $x> \xi_{\alpha}^{+}.$}} Let us check the following equality
\[
\max\left(A,\max_{\substack{\beta =1,\dots,N}} H_{\beta}^{-}\left(L_{\beta}^{\prime}\left(\left(K_{\beta}^{-}\right)^{-1}\circ K_{\alpha}(x)\right)\right)\right)=H_{\alpha}\left(L_{\alpha}^{\prime}(x)\right).
\]
On one hand, from the definition of $K_{\beta}^{-},$ we deduce that \[H_{\beta}^{-}\left(L_{\beta}^{\prime}\left(\left(K_{\beta}^{-}\right)^{-1}\circ K_{\alpha}(x)\right)\right)=H_{\beta}\left(L_{\beta}^{\prime}\left(\left(K_{\beta}^{-}\right)^{-1}\circ K_{\alpha}(x)\right)\right),\]
and arguing as previously, we deduce that 
$$H_{\beta}\left(L_{\beta}^{\prime}\left(\left(K_{\beta}^{-}\right)^{-1}\circ K_{\alpha}(x)\right)\right)=-K_{\beta}\left(\left(K_{\beta}^{-}\right)^{-1}\circ K_{\alpha}(x)\right) -L_{A}(0)=-K_{\alpha}(x)-L_{A}(0).$$

On the other hand from \eqref{J32}, we have  $H_{\alpha}(L_{\alpha}^{\prime}(x))=-K_{\alpha}(x)-L_{A}(0).$

And for $x>\xi_{\alpha}^{+},$ we have $H_{\alpha}(L_{\alpha}^{\prime}(x))> H_{\alpha}(L_{\alpha}^{\prime}(\xi_{\alpha}^{+}))=H_{\alpha}(\pi_{\alpha}^{+}(A))=A.$ So one can deduce the compatibility condition.

\paragraph{\textbf{Case 4 $x=0$ and $y> -\xi_{\beta}^{-}.$}} Let us check the following equality
\[
\max\left(A,\max\limits_{\begin{array}{c}
\alpha =1,\dots,N\\
 \alpha\neq \beta
\end{array}} H_{\alpha}^{-}\left(L_{\alpha}^{\prime}\left(\left( K_{\alpha}^{+}\right)^{-1}\circ K_{\beta}(-y)\right)\right), H_{\beta}^{-}\left(L_{\beta}^{\prime}(-y)\right) \right)=H_{\beta}\left(L_{\beta}^{\prime}(-y)\right).
\]
Similarly, as in the previous case, one can deduce that 
$$\max\limits_{\begin{array}{c}
\alpha =1,\dots,N\\
 \alpha\neq \beta
\end{array}} H_{\alpha}^{-}\left(L_{\alpha}^{\prime}\left(\left(K_{\alpha}^{+}\right)^{-1} \circ K_{\beta}(-y)\right)\right)=A_{0}\leq A.$$
And for $y>\-\xi_{\beta}^{-},$ we have $H_{\beta}^{-}(L_{\beta}^{\prime}(-y))> H_{\beta}^{-}(\pi_{\beta}^{-}(A))=A.$

\paragraph{ \textbf{Case 5 $y=0$ and $0<x\le \xi_{\alpha}^{+}.$}} Let us check the following equality
\[\max(A,\max_{\substack{\beta =1,\dots,N}} H_{\beta}^{-}(L_{\beta}^{\prime}(\xi_{\beta}^{-})))=H_{\alpha}(L_{\alpha}^{\prime}(\xi_{\alpha}^{+})).\]
On one hand,  from \eqref{J32} $H_{\alpha}(L_{\alpha}^{\prime}(\xi_{\alpha}^{+}))=-K_{\alpha}(\xi_{\alpha}^{+})-L_{A}(0)=-L_{A}(0)=A.$

On the other hand, $\max\limits_{\beta =1,\dots,N} H_{\beta}^{-}(L_{\beta}^{\prime}(\xi_{\beta}^{-}))= \max\limits_{\beta =1,\dots,N} H_{\beta}^{-}(L_{\beta}^{\prime}(\xi_{\beta}^{-}))= \max\limits_{\beta =1,\dots,N} H_{\beta}^{-}(\pi_{\beta}^{-}(A))=A.$

\paragraph{\textbf{Case 6 $x=0$ and $0<y\le -\xi_{\beta}^{-}.$}} Let us check the following equality
\[\max(A,\max_{\substack{\alpha =1,\dots,N}} H_{\alpha}^{-}(L_{\alpha}^{\prime}(\xi_{\alpha}^{+}))=H_{\beta}(L_{\beta}^{\prime}(\xi_{\beta}^{-})).\]
Similarly, as in Case 5, one can deduce the compatibility condition.

\paragraph{Case 7 $x=0$ and $y=0.$} Let us check the following equality
\[\max(A,\max_{\substack{\beta =1,\dots,N}} H_{\beta}^{-}(L_{\beta}^{\prime}(\xi_{\beta}^{-}))=\max(A,\max_{\substack{\alpha =1,\dots,N}} H_{\alpha}^{-}(L_{\alpha}^{\prime}(\xi_{\alpha}^{+})).\]
In fact, it follows directly from Case 5 and Case 6.

The proof is thus complete.
\end{proof}

\subsection{$C^{1,1}$ estimates for the reduced minimal action}
\label{sec:c11}

In this section, we study the Lipschitz regularity of the gradient of
the reduced minimal action $\mathcal{D}_{0}$. It turns out that
its gradient is indeed Lipschitz if the flux limiter $A$ is not equal
to $A_0$, the minimal flux limiter. Such a technical result will be
used when deriving error estimates. It is also of independent
interest.

\begin{proposition}[\textbf{$C^{1,1}$ estimates for the reduced minimal action}]\label{prop:c11}
  Let $\rho >0$ and assume that the Hamiltonians satisfy \eqref{Jconvex1}
  and \eqref{minH}.  The function $\mathcal{D}_{0}$ associated with the flux limiter $A_0+\rho$
  can be chosen
  $C^{1,1}(J_{K}^2)$ for any $K>0$ where
  $J_{K}^2=\{(x,y)\in J^2: d(0,x)\le K \mbox{ and } d(0,y)\le K \}$. Moreover, there exists
  $C_K$ and $C'_K$ such that
\begin{equation}
\label{estim1D0}
\norm{{ \partial_{xx}\mathcal{D}_{junction}}}_{L^\infty(J^2_K)}\le \frac{C_K}{\min(1,\rho)};
\end{equation}
 and 
 \begin{equation}
      \label{estim2G}
      \norm{H'_{\alpha}(\partial_{x}\mathcal{D}_{junction})\partial_{xx}\mathcal{D}_{junction}}_{L^\infty(J^2_K)}\le \frac{C'_K}{\min(1,\sqrt{\rho})}.
\end{equation}
the constants $C_K$ and $C'_K$ depends only on $K$ and \eqref{Jconvex1}.

Moreover, in the case where for all $\alpha \in \{1, ..., N \}$, $\min H_{\alpha}=A_{0}$, we have 
\begin{equation}
\label{estim2D0}
\norm{{ \partial_{xx}\mathcal{D}_{junction}}}_{L^\infty(J^2_K)}\le C_K.
\end{equation}
\end{proposition}
\begin{proof}[\textbf{Proof}]
In the following $A$ denotes $A_0+\rho$. 
Using \eqref{J27}, we see that $\partial_{xx}\mathcal{D}_{junction}=0$ on $\Delta_{\beta\alpha}$ for all $(\beta,\alpha)\in \{1,\dots, N\}^{2}$ and $\partial_{xx}\mathcal{D}_{junction}(y,x)=L_{\alpha}^{''}(x)$ on $\{0\}\times \{x\in J_{\alpha} \mbox{ } | \mbox{ } x>\xi_{\alpha}^{+}\}$. So
 it is sufficient to prove \eqref{estim1D0} and \eqref{estim2G} on $T:=J_{\beta}^{*}\times J_{\alpha}^{*} \backslash \Delta_{\beta\alpha}$ for all $(\beta,\alpha)\in \{1,\dots,N\}^{2}$. 
By \eqref{J27}, we deduce that on $T$, 
$$\partial_{xx}\mathcal{D}_{junction}(y,x)=\left(\frac{1}{1-\tau(y,x)}+\frac{x}{(1-\tau(y,x))^2}\frac{\partial\tau}{\partial x}(y,x)\right)L_{\alpha}^{\prime\prime}\left(\frac{x}{1-\tau(y,x)}\right).$$
Let us compute also $\frac{\partial\tau}{\partial x}$ using \eqref{J34},
$$ \frac{\partial\tau}{\partial x}(y,x)  =  \frac{\frac{1}{1-\tau(y,x)} K_{\alpha}^{\prime}\left(\frac{x}{1-\tau(y,x)}\right)}{\frac{y}{\tau(y,x)^{2}}K_{\beta}^{\prime}\left(\frac{-y}{\tau(y,x)}\right)-\frac{x}{(1-\tau(y,x))^{2}}K_{\alpha}^{\prime}\left(\frac{x}{1-\tau(y,x)}\right)}.$$
And as $K_{\beta}^{\prime}\left(\frac{-y}{\tau}\right)=\frac{y}{\tau}L_{\beta}^{\prime\prime}\left(\frac{-y}{\tau}\right) \geq 0$ and $K_{\alpha}^{\prime}\left(\frac{x}{1-\tau}\right)=\frac{-x}{1-\tau}L_{\alpha}^{\prime\prime}\left(\frac{x}{1-\tau}\right) \leq 0$ we deduce that
\begin{equation}
\label{Jderivetau}
\frac{\partial\tau}{\partial x}(y,x) =  \frac{\frac{-x}{(1-\tau(y,x))^{2}}L_{\alpha}^{\prime\prime}\left(\frac{x}{1-\tau(y,x)}\right)}{\frac{y^2}{\tau(y,x)^{3}}L_{\beta}^{\prime\prime}\left(\frac{-y}{\tau(y,x)}\right)+\frac{x^2}{(1-\tau(y,x))^{3}}L_{\alpha}^{\prime\prime}\left(\frac{x}{1-\tau(y,x)}\right)}.
\end{equation}
So we have on $T$
\begin{equation}
\label{Jderiveesec1}
\partial_{xx}\mathcal{D}_{junction}(y,x) =  \frac{\frac{y^2}{(1-\tau(y,x))\tau(y,x)^{3}}L_{\alpha}^{\prime\prime}\left(\frac{x}{1-\tau(y,x)}\right)L_{\beta}^{\prime\prime}\left(\frac{-y}{\tau(y,x)}\right)}{\frac{y^2}{\tau(y,x)^{3}}L_{\beta}^{\prime\prime}\left(\frac{-y}{\tau(y,x)}\right)+\frac{x^2}{(1-\tau(y,x))^{3}}L_{\alpha}^{\prime\prime}\left(\frac{x}{1-\tau(y,x)}\right)} \geq 0.
\end{equation}
 As the denominator is a sum of two positive functions, $\partial_{xx}\mathcal{D}_{junction}$ from above by the same numerator over only one term of the denominator. We deduce in these two cases that, 
\begin{equation}
\label{caspart}
 \partial_{xx}\mathcal{D}_{junction}(y,x) \leq \left\{\begin{array}{ll}
 2L_{\alpha}^{\prime\prime}\left(\frac{x}{1-\tau(y,x)}\right) & \mbox{ if } \tau(y,x)\leq \frac{1}{2}\\
 \frac{8y^{2}}{\left(\frac{x}{1-\tau(y,x)}\right)^{2}} L_{\beta}^{\prime\prime}\left(\frac{-y}{\tau(y,x)}\right) & \mbox{ if } \tau(y,x)\geq \frac{1}{2}.
 \end{array}\right.
 \end{equation} 
 Moreover, we have on $T$, $$H_{\alpha}^{\prime}\left(\partial_{x} \mathcal{D}_{junction}(y,x) \right)=H_{\alpha}^{\prime}\left(L_{\alpha}^{\prime}\left(\frac{x}{1-\tau (y,x)}\right) \right)=\frac{x}{1-\tau (y,x)},$$ and 
 $$\frac{x}{1-\tau (y,x)}\partial_{xx}\mathcal{D}_{junction}(y,x) \leq \left\{\begin{array}{ll}
 4 x^{2}L_{\alpha}^{\prime\prime}\left(\frac{x}{1-\tau(y,x)}\right) & \mbox{ if } \tau(y,x)\leq \frac{1}{2}\\
 \frac{8y^{2}}{\frac{x}{1-\tau(y,x)}} L_{\beta}^{\prime\prime}\left(\frac{-y}{\tau(y,x)}\right) & \mbox{ if } \tau(y,x)\geq \frac{1}{2},
 \end{array}\right.$$

In the case $\tau(y,x)\leq \frac{1}{2}$, as $0\leq \frac{x}{1-\tau(y,x)}\leq 2x$, we get the inequality \eqref{estim1D0} and \eqref{estim2G}. 
Let us prove the following lower bound for $(y,x)\in T$, 
\begin{equation}
\label{Jlowbd}
\frac{x}{1-\tau(y,x)}\geq \xi_{\alpha}^{+},
\end{equation}
which helps us for the second case.
For $y \in J_{\beta}$, we see that $x\rightarrow \frac{x}{1-\tau (y,x)}$ has a non-negative derivative using \eqref{Jderivetau}, so it is a non-decreasing function. Therefore to prove \eqref{Jlowbd}, it is sufficient to show it on $\partial T$. Let $(y,x)$ be in $\partial T$. We distinguish three cases.

In the case where $y=0$, necessarily $x\geq \xi_{\alpha}^{+}$ and as $\tau(y,x)\in [0,1]$, we deduce \eqref{Jlowbd}.

In the case where $y\in ]0,-\xi_{\beta}^{-}[$, we have $(y,x) \in \bigg\{(y,x)\in J_{\beta}\times J_{\alpha}, \quad \frac{x}{\xi_{\alpha}^{+}}-\frac{y}{\xi_{\beta}^{-}}= 1 \bigg\}$. So by \eqref{Jborddelta} we deduce that     
$\frac{x}{1-\tau (y,x)}=\xi_{\alpha}^{+}$.

In the case where $y\geq -\xi_{\beta}^{-}$, we have $x=0$. It is enough to prove that 
\begin{equation}
\label{Jliminf}
\liminf\limits_{x'\rightarrow 0} \frac{x'}{1-\tau (y,x')}\geq \xi_{\alpha}^{+}.
\end{equation}
 We have for $(y,x') \in T$,  
$$ K_{\alpha}\left(\frac{x'}{1-\tau (y,x')}\right)= K_{\beta}\left(\frac{-y}{\tau (y,x')} \right)\leq K_{\beta}\left(\frac{-\xi_{\beta}^{-}}{\tau (y,x')} \right),$$
as $K_{\beta}$ is non-decreasing on $]-\infty,0]$. We deduce that
$$\frac{x'}{1-\tau (y,x')}\geq (K_{\alpha}^{+})^{-1}\circ K_{\beta}\left(\frac{-\xi_{\beta}^{-}}{\tau (y,x')} \right),$$
as $(K_{\alpha}^{+})^{-1}$ is non-increasing. 
As $\lim\limits_{x'\rightarrow 0}\tau(y,x')=1$, taking the limit inferior in the preceding inequality gives \eqref{Jliminf}. So we deduce \eqref{Jlowbd} and 
$$\partial_{xx}\mathcal{D}_{junction}(y,x)\leq  \frac{8y^{2}}{(\xi_{\alpha}^{+})^{2}} L_{\beta}^{\prime\prime}\left(\frac{-y}{\tau(y,x)}\right) \quad \mbox{ if } \tau(y,x)\geq \frac{1}{2},$$
$$ \frac{x}{1-\tau(y,x)}\partial_{xx}\mathcal{D}_{junction}(y,x)\leq  \frac{8y^{2}}{\xi_{\alpha}^{+}} L_{\beta}^{\prime\prime}\left(\frac{-y}{\tau(y,x)}\right) \quad \mbox{ if } \tau(y,x)\geq \frac{1}{2}.$$
If $\xi_{\alpha}^{+}>1$, we deduce \eqref{estim1D0}. 
If $\xi_{\alpha}^{+}\leq1$, let us prove that it exists a constant $C>0$ only depending on \eqref{Jconvex1} such that 
\begin{equation}
\label{Jminorxi}
(\xi_{\alpha}^{+})^{2}\geq C\rho. 
\end{equation}

As $A=A_{0}+\rho$ we have 
$$ K_{\alpha}(\xi)=L_{\alpha}(\xi)-\xi L_{\alpha}^{\prime}(\xi)+A_{0}+\rho,$$
and $$K_{\alpha}^{\prime}(\xi)=-\xi L_{\alpha}^{\prime\prime}(\xi).$$
The function $L_{\alpha}^{\prime\prime}$ is bounded on $[0,1]$, it exists $M>0$ such that
$$\gamma \leq L_{\alpha}^{\prime\prime} \leq M.$$
So we have $K_{\alpha}^{\prime}(\xi)\geq -M\xi$.
We integrate from $0$ to $\xi \geq 0$ and get 
\begin{equation}
\label{boundka}
K_{\alpha}(\xi)-K_{\alpha}(0)\geq -M\frac{\xi^{2}}{2}.
\end{equation}
Taking $\xi=\xi_{\alpha}^{+}$, as $K_{\alpha}(\xi_{\alpha}^{+})=0$ and as $L_{\alpha}(0)+A_{0}\geq 0$, we deduce that  
$$(\xi_{\alpha}^{+})^{2}\geq \frac{2}{M}(L_{\alpha}(0)+A_{0}+\rho)\geq \frac{2}{M}\rho .$$ 
So we get \eqref{Jminorxi} and we deduce \eqref{estim1D0} and \eqref{estim2G}.
 
 In the case where for all $\alpha \in \{1, ..., N \}$, $\min H_{\alpha}=A_{0}$, we only have to consider the case $\tau(y,x)\geq \frac{1}{2}$ in \eqref{caspart} since the case $\tau(y,x)\leq \frac{1}{2}$ gives already the bound \eqref{caspart}. In order to get a bound for the term $\frac{8y^{2}}{\left(\frac{x}{1-\tau(y,x)}\right)^{2}}=\frac{8y^{2}}{\left((K_{\alpha}^{+})^{-1}\circ K_{\beta}\left(-\frac{y}{\tau (y,x)}\right)\right)^{2}}$, let us prove that for all $\xi\in [-2K,2K]$, we have 
 \begin{equation}
 \label{boundcasp}
 \frac{\xi^{2}}{\left((K_{\alpha}^{+})^{-1}\circ K_{\beta}(-\xi)\right)^{2}} \leq C_{2K},
 \end{equation}
 where $C_{2K}>0$ is a constant which depends on $K$.
 Let $M_{2K}$ be such that on $[-2K,2K]$ we have for all $\alpha \in \{1, ..., N \}$,
 $$\gamma \leq L_{\alpha}^{\prime\prime} \leq M_{2K}.$$
Replacing $\xi$ by $(K_{\alpha}^{+})^{-1}(\xi)$ in \eqref{boundka}, we deduce that 
$$M_{2K} \frac{\left((K_{\alpha}^{+})^{-1}(\xi)\right)^{2}}{2}\geq -\xi+K_{\alpha}(0).$$
So we have 
$$ M_{2K} \frac{\left((K_{\alpha}^{+})^{-1}\circ K_{\beta}(-\xi)\right)^{2}}{2}\geq -K_{\beta}(-\xi)+K_{\alpha}(0). $$
As for \eqref{boundka}, we have the following inequality 
$$K_{\beta}(0)-K_{\beta}(-\xi)\geq \gamma\frac{\xi^{2}}{2}.$$
So as $K_{\alpha}(0)=K_{\beta}(0)=\rho$ we deduce that 
$$ M_{2K} \frac{\left((K_{\alpha}^{+})^{-1}\circ K_{\beta}(-\xi)\right)^{2}}{2}\geq \gamma\frac{\xi^{2}}{2}+K_{\alpha}(0)-K_{\beta}(0)\geq \gamma\frac{\xi^{2}}{2}. $$
That gives \eqref{boundcasp} and we deduce \eqref{estim2D0}.
\end{proof}

\section{Error estimates}
\label{sec:ee}

{\subsection{Proof of the error estimates}}

To prove Theorem \ref{thm:ee}, we will need the following result whose
classical proof is given in Appendix for the reader's convenience.
\begin{lemma}[\textbf{A priori control}]\label{Lem3}
  Let $T>0$ and let $u^h$ be a sub-solution of the numerical scheme
  \eqref{scheme}-\eqref{eq:cid} and $u$ a super-solution of \eqref{eq:main}-\eqref{eq:ic} 
  satisfying for some $C_T>0$,
  \[u(t,x)\ge -C_{T}(1+d(0,x))\qquad \text{ for } \quad t\in (0,T).\]
  Then there exists a constant $C=C(T)>0$ such that for all $(t,x) \in
  \mathcal{G}_h$, $t \le T$, and $(s,y)\in[0,T)\times J$, we have
\begin{equation}
u^h(t,x)\le u(s,y)+C(1+d(x,y)).
\end{equation}
\end{lemma}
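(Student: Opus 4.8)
The plan is to establish the a priori bound by a doubling-of-variables
argument, comparing the discrete sub-solution $u^\eps$ against the
continuous super-solution $u$ using the auxiliary function built from
the geodesic distance $d$. First I would fix a super-solution $u$
satisfying the stated linear lower bound and the numerical solution
$u^\eps$, and argue by contradiction: suppose the constant $C$ in the
conclusion cannot be chosen uniformly. For a candidate constant $C$, I
would introduce the quantity
\[
M:=\sup_{\substack{(t,x)\in\mathcal{G}_\eps,\ t\le T\\ (s,y)\in[0,T)\times J}}
\Big\{u^\eps(t,x)-u(s,y)-C\big(1+d(x,y)\big)\Big\}
\]
and aim to show $M\le 0$ for $C$ large enough. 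The superlinear growth
of the penalisation $d(x,y)$ together with the linear bounds on $u^\eps$
(from the stability estimate \eqref{eq:estimuo} in
Lemma~\ref{lem:stab}) and on $u$ (the assumed lower bound) guarantees
that the supremum is attained at some point
$(\bar t,\bar x,\bar s,\bar y)$; this compactness step is where the
growth assumptions are used in an essential way.

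Next I would split into cases according to whether the maximizing time
$\bar t$ equals $0$ or is positive, and similarly whether $\bar s=0$.
If $\bar t=0$, the initial condition $u^\eps(0,\cdot)=u_0$ together with
the fact that $u$ is a super-solution with $u_\star(0,\cdot)\ge u_0$,
and the Lipschitz continuity of $u_0$, directly give
$u^\eps(0,\bar x)-u(\bar s,\bar y)\le C(1+d(\bar x,\bar y))$ provided
$C$ dominates the Lipschitz constant of $u_0$, so $M\le 0$. The
interesting case is $\bar t>0$: here I would use that $(t,x)\mapsto
u^\eps(t,x)-\big[u(\bar s,\bar y)+C(1+d(x,\bar y))\big]$ attains its
maximum over $\mathcal{G}_\eps$ at $(\bar t,\bar x)$. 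Writing
$\bar t=\bar t'+\Delta t$ and freezing the $(\bar s,\bar y)$ variables,
the map $\varphi(t,x):=u(\bar s,\bar y)+C(1+d(x,\bar y))$ plays the role
of a smooth-in-$x$ test function touching $u^\eps$ from above at
$(\bar t,\bar x)$, so the discrete viscosity inequality
(Lemma~\ref{lem:visc-ineq}) applies and yields a one-sided inequality
of the form $\delta_t\varphi+\mathcal{H}(\bar x,D_+\varphi,D_-\varphi)\le 0$.
Simultaneously, freezing $(\bar t,\bar x)$ and viewing
$\psi(s,y):=u^\eps(\bar t,\bar x)-C(1+d(\bar x,y))$ as a continuous test
function touching the super-solution $u$ from below at $(\bar s,\bar y)$
gives the continuous super-solution inequality.

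The main obstacle will be reconciling these two inequalities at the
junction point, i.e.\ when $\bar x=0$ or $\bar y=0$. Away from the
junction the gradients of $x\mapsto C\,d(x,\bar y)$ are simply $\pm C$,
and since $H_\alpha$ and the discrete Hamiltonian $\mathcal{H}_\alpha$
are consistent and the scheme is monotone, the two inequalities combine
to force $M\le 0$ once $C$ exceeds the Lipschitz constants of the
$H_\alpha$; the only genuine work is controlling the consistency error,
which is $\mathcal{O}(\Delta x)$ and absorbed into $C$. At the junction,
the geodesic distance is not smooth and one must check that the
one-sided discrete and continuous inequalities are compatible with the
junction function $F$; here I would exploit the monotonicity and
coercivity of $F$ in \eqref{F} (equivalently, that we are in the
flux-limited setting) to rule out the bad configuration, much as in the
proof of the comparison principle, Theorem~\ref{Theo4}. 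Since this is
only an a priori control with a free constant $C$ rather than a sharp
rate, no vertex test function is needed: the crude penalisation
$C(1+d(x,y))$ suffices, and choosing $C$ larger than all relevant
Lipschitz constants closes the argument and gives the claimed
inequality $u^\eps(t,x)\le u(s,y)+C(1+d(x,y))$.
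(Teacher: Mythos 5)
Your overall strategy (doubling of variables with a distance-based penalisation, discrete viscosity inequality on one side, continuous super-solution inequality on the other) is the right family of arguments, but the functional you maximise is missing the ingredient that actually produces the contradiction. In the paper the relevant supremum (Lemma~\ref{Lem7}) is of
\[
u^\eps(t,x)-u(t,y)-C_2 t-C_1\sqrt{1+d^2(x,y)}-\frac{\eta}{T-t}-\frac{\delta}{2}d^2(y,0),
\]
and the conclusion comes from subtracting the two viscosity inequalities to get $C_2\le H(y,-C_1\varphi_y-\delta d(0,y))-H(x,C_1\varphi_x)+O(\Delta x)$, whose right-hand side is bounded once $C_1$ is fixed (gradients of $\varphi$ are bounded by $1$), so $C_2$ large gives the contradiction. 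Your functional $u^\eps(t,x)-u(s,y)-C(1+d(x,y))$ contains no term linear in time, so when you subtract your two inequalities the left-hand side is $0$ and you obtain $0\le(\text{bounded})$ --- no contradiction. The single constant $C$ in front of $1+d(x,y)$ cannot be made to do this job: enlarging $C$ enlarges the spatial gradients of the test function, and the difference $H(y,-C\varphi_y)-H(x,C\varphi_x)$ does not become negative for large $C$ (when $x$ and $y$ sit on different branches the two Hamiltonians are evaluated at unrelated arguments, and by coercivity both terms blow up). One needs \emph{two} constants: a spatial one $C_1>L_0$ fixed first, then a temporal one $C_2$ chosen large depending on $C_1$. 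Your closing claim that ``$C$ exceeding the Lipschitz constants of the $H_\alpha$'' closes the argument is exactly the step that fails.

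Two further points. First, your compactness step is wrong as stated: $d(x,y)$ is linear, not superlinear, and even after noting that $C>L_0$ makes the functional bounded above, the supremum need not be attained because both points can escape to infinity along a branch with $d(x,y)$ bounded; this is why the paper adds the localisation $\frac{\delta}{2}d^2(y,0)$ (and $\frac{\eta}{T-t}$ for the time endpoint). Second, doubling the time variable with $t$ and $s$ uncoupled gives you two time-derivative terms both equal to zero, which is another way of seeing that nothing survives to be contradicted; the paper sidesteps this entirely by first proving the \emph{same-time} estimate and then passing to different times with the explicit barriers $u_0(x)\pm C_0 t$ combined with the discrete comparison principle (Lemma~\ref{Prop1}) and the continuous one (Theorem~\ref{Theo4}) --- a step that costs only $2C_0T+L_0 d(x,y)$ and requires no further PDE argument. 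Your instinct that no vertex test function is needed here is correct, and your worry about reconciling the junction conditions is actually unnecessary for an a priori bound with a free constant: one only needs the Hamiltonians and $F$ to be bounded on bounded gradient sets, not any compatibility between them.
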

We also need the following result \cite[Lemma 4.4]{IM} where the proof is given in \cite{IM}. 

\begin{lemma}[From non-convex to convex Hamiltonians]
\label{Jlmbeta}
 Let $K\in (0,+\infty).$
Given Hamiltonians $H_{\alpha}:[-K,K]\rightarrow \mathbb{R}$ satisfying \eqref{H}, there exists a function $\beta:\mathbb{R}\rightarrow \mathbb{R}$ such that the functions $\beta\circ H_{\alpha}$ satisfy \eqref{Jconvex1} for $\alpha=1,...,N$. Moreover, we can choose $\beta$ such that $\beta\in C^{2}(\mathbb{R})$ and $\beta'>1$.   
\end{lemma}

\begin{remark}
In \cite[Lemma 4.4]{IM}, the functions $\beta\circ H_{\alpha}$ satisfy in fact the following assumptions
\begin{equation}
\label{Jconvex2}
\left\lbrace \begin{array}{l}
H_{\alpha}\in C^{2}(\mathbb{R}) \text{ with } H''_{\alpha}>0 \text{ on } \mathbb{R},\\
H'_{\alpha}<0 \text{ on } (-\infty,0) \text{ and } H'_{\alpha}>0 \text{ on } (0,+\infty),\\
\lim\limits_{|p|\rightarrow +\infty} \frac{H_{\alpha}(p)}{|p|}= +\infty.
\end{array}
\right.
\end{equation}
which implies \eqref{Jconvex1}. Indeed, in the next proof on error estimates, we only need to consider Hamiltonians on a compact set which only depends on $u_{0}$ and the Hamiltonians $H_{\alpha}$, thanks to the fact that the solution is Lipschitz continuous, see Theorem \ref{thm:exis-cont} and \eqref{gradient}. So on $[-K,K]$, the functions $(\beta\circ H_{\alpha})''$ are bounded by some constant $C>0$. We deduce that the functions $L_{\alpha}$ are of class $C^{2}$ and satisfy $L''_{\alpha}\geq \gamma=\frac{1}{C}$.   
Indeed, from the relation $H_{\alpha}(p)+L_{\alpha}(q)=pq$ with $q=H'_{\alpha}(p)$, one can deduce that $L'_{\alpha}(H'_{\alpha}(q))=q,$ so $$L''_{\alpha}(q)=\frac{1}{H''_{\alpha}\circ (H'_{\alpha})^{-1}(q)} \geq \gamma.$$
\end{remark}

We now turn to the proof of the error estimates in the case of flux-limited junction conditions. 
\begin{proof}[\textbf{Proof of Theorem~\ref{thm:ee}}]
We assume that the Hamiltonians $H_{\alpha}$ satisfy \eqref{H}. Let $u$ be the solution of \eqref{eq:continuous} and $u^{h}$ the solution of the corresponding scheme \eqref{scheme} with $F=F_{A}$. 

In order to get \eqref{eq:ee}, we only prove that
\[u^h(t,x)-u(t,x)\le 
\begin{cases}
C_T (\Delta x)^{1/2} & \text{ if } A>A_0, \\
C_T (\Delta x)^{2/5} & \text{ if } A=A_0 
\end{cases}
\qquad \text{ in } [0,T)\times {J} \cap \mathcal{G}_{h}\] 
since the proof of the {other} inequality is very similar.
{ We are going to prove that 
\begin{equation}\label{eq:avant-opti}
  u^{h}(t,x)-u(t,x)\le
\begin{cases}
 \mathcal{O} \left(\frac{\Delta t}{\nu}\right)
  + \mathcal{O}\left(\frac{\Delta x}{\eps}\right) 
 +\mathcal{O}(\eps)+\mathcal{O}(\nu) & \text{ if } A > A_0, \\
 \mathcal{O} \left(\frac{\Delta t}{\nu}\right) + \mathcal{O}\left(\frac{\Delta x}{\varepsilon\sqrt{\rho}}\right)
  + \mathcal{O}\left(\frac{(\Delta x)^2}{(\eps \rho)^2}\right) +
  \mathcal{O}(\rho) +\mathcal{O}(\eps)+\mathcal{O}(\nu) & \text{ if } A = A_0.
\end{cases}
\end{equation}
which yields the desired inequality by minimizing the right hand side
with respect to $\eps$ and $\nu$ in the case $A>A_0$ and with
respect to $\rho, \eps$ and $\nu$ in the case $A=A_0$.}  
 Let $\beta$ be the function defined in Lemma \ref{Jlmbeta} such that the functions $\beta \circ H_{\alpha}$ satisfy \eqref{Jconvex1}. 
 In the following, we consider that the function $\mathcal{D}_{0}$ is associated to the Hamiltonians $\beta \circ H_{\alpha}$ and to the flux limiter $\beta (A)$ which satisfies $\beta (A) >\beta (A_{0})$ in the case $A>A_{0}$.

The remaining of the proof proceeds in several steps. 

\paragraph{Step 1: Penalization procedure.}
Using the expression of $\mathcal{D}_{0}$ in \eqref{J21} and $\mathcal{D}_{junction}$ in \eqref{J26}, we deduce that it exists $C>0$, such that $\forall x \in J$
$$\mathcal{D}_{0}(0,0)=L_{A}(0)=-A\leq \mathcal{D}_{0}(x,x)\leq C.$$
Let $\tilde{\mathcal{D}}_{0}=\mathcal{D}_{0}+A$, we have that $$0 \leq \tilde{\mathcal{D}}_{0}(x,x)\leq C+A.$$


  For $\eta$, $\delta$, $\varepsilon$, $\nu$ positive constants, let us define
\begin{equation}
\label{J14}
M_{\varepsilon,\delta}=\sup_{\substack{{(t,x)\in  \mathcal{G}_h},\\{(s,y)\in [0,T)\times J}}}
\bigg\{u^h(t,x)-u(s,y)-\varepsilon \tilde{\mathcal{D}}_{0}\left(\frac{y}{\varepsilon},\frac{x}{\varepsilon}\right)
-\frac{(t-s)^2}{2\nu}-\frac{\delta}{2}d^2(y,0)-\frac{\eta}{T-s}\bigg\}
\end{equation}
where the test function $\mathcal{D}_0$ is given in \eqref{J21}.
In this step, we assume that $M_{\varepsilon,\delta}>0.$
Thanks to
Lemma~\ref{Lem3} and the superlinearity of $\mathcal{D}_0$ (see
Lemma \ref{JLem7}), we deduce that for $(x,y)$ such that the quantity in the supremum is larger than $\frac{M_{\varepsilon,\delta}}{2}$, we have
\[
0<\frac{M_{\varepsilon,\delta}}{2}\le C(1+d(y,x))-\varepsilon\frac{\gamma}{2}d^2\left(\frac{y}{\varepsilon}, \frac{x}{\varepsilon}\right)
-\frac{(t-s)^2}{2\nu}-\frac{\delta}{2}d^2(y,0)-\frac{\eta}{T-s}
\] 
which implies in particular 
\[
\frac{\gamma}{2\varepsilon}d^2(y,x)\le C(1+d(y,x)),
\] and $$\frac{\delta}{2}d^2(y,0)\leq C(1+d(y,x)).$$

Notice that in the following, we use the notation $\mathcal{D}_{0}$ instead of $\tilde{\mathcal{D}}_{0}$.  Indeed we deal only with partial derivatives of $\mathcal{D}_{0}$ which are equal to partial derivatives of $ \tilde{\mathcal{D}}_{0}$ and differences between two values of $\mathcal{D}_{0}$ at two points which are equal to differences between two values of $\tilde{\mathcal{D}}_{0}$ at these two points. 

We deduce from the two last inequalities that $d(y,x)$ is bounded and $d(y,0)$ is bounded, so the supremum is reached at some
point $(t,x,s,y)$ where $y\in J_{\beta}$ and $x \in J_{\alpha}$.
This estimate together with the fact that
$-\partial_{y}\mathcal{D}_{0} (\frac{y}{\varepsilon},\frac{x}{\varepsilon})- \delta d(y,0)$ lies in the viscosity
subdifferential of $u(t,\cdot)$ at $x$ and the fact that $\delta d(y,0)$ is bounded, implies that there exists $K>0$
only depending on $\|\nabla u\|_\infty$ (see
Theorem \ref{thm:exis-cont}) such that the point $(t,x,s,y)$
realizing the maximum satisfies 
\begin{equation}
\label{J15}
 \left| \partial_{y}\mathcal{D}_{0} \left(\frac{y}{\varepsilon},\frac{x}{\varepsilon}\right) \right|
 \le K.
\end{equation}
If $\alpha = \beta,$ for $\frac{y}{\varepsilon}$ or $\frac{x}{\varepsilon}$ large, then \eqref{J15} implies 
$$\left| L_{\alpha}^{\prime}\left(\frac{y}{\varepsilon}-\frac{x}{\varepsilon}\right) \right|
 \le K.$$ As $L_{\alpha}$ is superlinear, it implies that $d\left(\frac{y}{\varepsilon},\frac{x}{\varepsilon}\right)\leq C$, for $C>0$ which is sufficient for the use in step 2 of the $C^{1,1}$ estimates as $\mathcal{D}_{0}$ only depends on $d\left(\frac{y}{\varepsilon},\frac{x}{\varepsilon}\right)$ for $\frac{y}{\varepsilon}$ or $\frac{x}{\varepsilon}$ large.
 If $\alpha \neq \beta$, assume by contradiction that $\frac{y}{\varepsilon}$ or $\frac{x}{\varepsilon}$ are not bounded when $\varepsilon\rightarrow 0$. Then using \eqref{J28} and \eqref{J34} we get a contradiction with \eqref{J15}. So $\frac{y}{\varepsilon}$ and $\frac{x}{\varepsilon}$ are bounded by a constant which only depends on $\|\nabla u\|_\infty$ and the Hamiltonians $H_{\alpha}$. 

We want to prove that for $\eta>\eta^{\star}$ (to be determined)
the supremum in \eqref{J14} is attained for $t=0$ or
$s=0.$ We assume that $t>0$ and $s>0$
and we prove that $\eta \le \eta^\star$.

\paragraph{Step 2: Viscosity inequalities.}
Since $t>0$ and $s>0$, we can use
Lemma \ref{lem:visc-ineq} and get the following viscosity
inequalities.  

If $x \neq 0$, then
\begin{multline*}
  \frac{t-s}{\nu}-\frac{\Delta t}{2\nu} +\max \bigg\{ H_{\alpha}^{-}
  \left( \frac{\varepsilon}{\Delta x} \left\{ \mathcal{D}_0\left(\frac{y}{\varepsilon}, \frac{x+\Delta
          x}{\varepsilon} \right)
    -\mathcal{D}_0\left(\frac{y}{\varepsilon},\frac{x}{\varepsilon}\right)\right\}\right)
  ,\\
  H_{\alpha}^{+} \bigg( \frac{\varepsilon}{\Delta x} \bigg\{
      \mathcal{D}_0\bigg(\frac{y}{\varepsilon},\frac{x}{\varepsilon}\bigg)
     -\mathcal{D}_0\bigg(\frac{y}{\varepsilon},\frac{x-\Delta
      x}{\varepsilon}\bigg) \bigg\}\bigg)\bigg\} \le 0.
\end{multline*}
If $x =0$, then 
\[
\frac{t-s}{\nu}-\frac{\Delta t}{2\nu}+\max\bigg(A, \max_{\substack{\beta}}
\left\{H_{\beta}^{-}\left(\frac{\varepsilon}{\Delta x}\bigg\{\mathcal{D}_0\left(\frac{y}{\varepsilon},\frac{\Delta x}\eps\right)-\mathcal{D}_0\left(\frac{y}{\varepsilon},0\right)\bigg\}\right)\right\}\bigg)\le 0.
\]
If $y \neq 0$, then 
\[
-\frac{\eta}{(T-s)^2}+\frac{t-s}{\nu} +H_{\alpha}\left(
-\partial_{y}{\mathcal{D}_0}\left(\frac{y}{\varepsilon},\frac{x}{\varepsilon}\right)-\delta d(y,0)\right)\ge 0.
\]
If $y =0$, then 
\[
-\frac{\eta}{(T-s)^2}+\frac{t-s}{\nu}+ F_{A}\left(-\partial_{y}\mathcal{D}_0\left( 0,\frac{x}{\varepsilon}\right)\right)
\ge 0.
\]

 We now distinguish the case $A>A_0$ and $A=A_0$.

\paragraph{Case $A>A_0$.} Thanks to the $C^{1,1}$ regularity of the function $\mathcal{D}_0$, see
  Proposition~\ref{prop:c11}, and the fact that the functions $H_{\alpha}^{\pm}, H_{\alpha}$ are locally Lipschitz
  we obtain, for $x\in J_{\alpha}$ and $y\in J_{\beta}$ with $\alpha\neq \beta$ (i.e. for $\mathcal{D}_{0}=\mathcal{D}_{junction}$),

\begin{eqnarray}
\label{J2a}\text{ if } x \neq 0, && \frac{t-s}{\nu}-\frac{\Delta
  t}{2\nu}+H_{\alpha}\left(\partial_x \mathcal{D}_0\left(\frac{y}{\varepsilon},\frac{x}{\varepsilon}\right)\right)+O\left(\frac{\Delta
  x}{\varepsilon}\right) \le 0 \\
\label{J2b}
\text{ if } x = 0, &&\frac{t-s}{\nu}-\frac{\Delta t}{2\nu}+F_{A}\left(
\partial_x\mathcal{D}_0\left(\frac{y}{\varepsilon},0\right)\right)
+O\left(\frac{\Delta x}{\varepsilon }\right)\le 0 \\
\label{J1a}
\text{ if } y \neq 0,& & 
\frac{t-s}{\nu} +H_{\beta}\left(-\partial_y \mathcal{D}_0\left(\frac{y}{\varepsilon},\frac{x}{\varepsilon}\right)\right)
+O(\sqrt{\delta})\ge \frac{\eta}{2 T^2}  \\
\label{J1b}
\text{ if } y = 0, &&\frac{t-s}{\nu} +F_{A} \left(-\partial _y \mathcal{D}_0\left(0,\frac{x}{\varepsilon}\right)\right) \ge \frac{\eta}{2 T^2}.
\end{eqnarray}

Now for $(y,x)\in J_{\alpha}\times J_{\alpha},$ from \eqref{J21} and \eqref{J26}, one can deduce that $\mathcal{D}_0$ is in fact $C^2$ far away from the curve $\Gamma^{\alpha}$ defined in Remark \ref{JRem1}, hence the viscosity inequalities \eqref{J2a}-\eqref{J1b} remain true. 

Now we treat the case where $(\frac{y}{\varepsilon},\frac{x}{\varepsilon})$ is near the curve $\Gamma^{\alpha},$ but not on it. 

First if $(\frac{y}{\varepsilon},\frac{x}{\varepsilon})$ is such that $(\frac{y}{\varepsilon},\frac{x}{\varepsilon})\in K^{\alpha}\backslash \Gamma^{\alpha}$ and $(\frac{y}{\varepsilon},\frac{x-\Delta x}{\varepsilon})\notin K^{\alpha},$
we have $$\mathcal{D}_0\left(\frac{y}{\varepsilon},\frac{x-\Delta x}{\varepsilon}\right)\leq L_{\alpha}\left(\frac{x-\Delta x -y}{\varepsilon}\right).$$ So as $H_{\alpha}^{+}$ is non-decreasing, we deduce that 
$$  H_{\alpha}^{+} \bigg( \frac{\varepsilon}{\Delta x} \bigg\{
     L_{\alpha}\bigg(\frac{x-y}{\varepsilon}\bigg)
     -L_{\alpha}\bigg(\frac{x-\Delta x-y
      }{\varepsilon}\bigg) \bigg\}\bigg) \leq  H_{\alpha}^{+} \bigg( \frac{\varepsilon}{\Delta x} \bigg\{
      \mathcal{D}_0\bigg(\frac{y}{\varepsilon},\frac{x}{\varepsilon}\bigg)
     -\mathcal{D}_0\bigg(\frac{y}{\varepsilon},\frac{x-\Delta
      x}{\varepsilon}\bigg) \bigg\}\bigg).$$ 
      Hence the viscosity inequalities \eqref{J2a}-\eqref{J1b} remain true.
 If $(\frac{y}{\varepsilon},\frac{x}{\varepsilon})$ is such that  $(\frac{y}{\varepsilon},\frac{x}{\varepsilon})\notin K^{\alpha} $ and $(\frac{y}{\varepsilon},\frac{x+\Delta x}{\varepsilon})\in K^{\alpha}\backslash \Gamma^{\alpha}$, we have 
$$\mathcal{D}_0\left(\frac{y}{\varepsilon},\frac{x+\Delta x}{\varepsilon}\right)\leq \mathcal{D}_{junction}\left(\frac{y}{\varepsilon},\frac{x+\Delta x}{\varepsilon}\right).$$
So as $H_{\alpha}^{-}$ is non-increasing, we deduce that 
$$  H_{\alpha}^{-} \bigg( \frac{\varepsilon}{\Delta x} \bigg\{\mathcal{D}_{junction}\bigg(\frac{y}{\varepsilon},\frac{x+\Delta
      x}{\varepsilon}\bigg)-
      \mathcal{D}_{junction}\bigg(\frac{y}{\varepsilon},\frac{x}{\varepsilon}\bigg)
      \bigg\}\bigg) \leq  H_{\alpha}^{-} \bigg( \frac{\varepsilon}{\Delta x} \bigg\{
      \mathcal{D}_0\bigg(\frac{y}{\varepsilon},\frac{x+\Delta x}{\varepsilon}\bigg)-
      \mathcal{D}_0\bigg(\frac{y}{\varepsilon},\frac{x}{\varepsilon}\bigg)
      \bigg\}\bigg).$$ 
Hence the viscosity inequalities \eqref{J2a}-\eqref{J1b} remain true.

Now for $(\frac{y}{\varepsilon},\frac{x}{\varepsilon})$ on the curve $\Gamma^{\alpha},$ we get the following viscosity inequalities, using Proposition \ref{propimp}.

If $x\neq 0,$ then
\begin{multline*}
  \frac{t-s}{\nu}-\frac{\Delta t}{2\nu} +\max \bigg\{ H_{\alpha}^{-}
  \left( \frac{\varepsilon}{\Delta x} \left\{ L_{\alpha}\left(\frac{x+\Delta
          x-y}{\varepsilon}  \right)
    -L_{\alpha}\left(\frac{x-y}{\varepsilon}\right)\right\}\right)
  ,\\
  H_{\alpha}^{+} \bigg( \frac{\varepsilon}{\Delta x} \bigg\{
      L_{\alpha}^{\prime}(\xi_{\alpha}^{+})\frac{x}{\varepsilon}
     -L_{\alpha}^{\prime}(\xi_{\alpha}^{+})\bigg(\frac{x-\Delta
      x}{\varepsilon}\bigg) \bigg\}\bigg)\bigg\} \le 0.
\end{multline*}

If $x =0$, then 
\[
\frac{t-s}{\nu}-\frac{\Delta t}{2\nu}+\max\bigg(A, \max_{\substack{\alpha}}
\left\{H_{\alpha}^{-}\left(\frac{\varepsilon}{\Delta x}\bigg\{L_{\alpha}\left(\frac{\Delta x-y}\eps
\right)-L_{\alpha}\left(-\frac{y}{\varepsilon}\right)\bigg\}\right)\right\}\bigg)\le 0.
\]

If $y \neq 0$, then 
\[
-\frac{\eta}{(T-s)^2}+\frac{t-s}{\nu} +\max \bigg\{ H_{\alpha}^{-}
  \left(L_{\alpha}^{\prime}\bigg(\frac{x-y}{\varepsilon}\bigg)-\delta d(y,0)\right),
  H_{\alpha}^{+} \bigg( L_{\alpha}^{\prime}(\xi_{\alpha}^{-})-\delta d(y,0)\bigg)\bigg\}\ge 0.
\]

If $y =0$, then 
\[
-\frac{\eta}{(T-s)^2}+\frac{t-s}{\nu}+ \max\bigg(A,\max_{\substack{\alpha}}H_{\alpha}^{-}\bigg(L_{\alpha}^{\prime}\bigg(\frac{x}{\varepsilon}\bigg)\bigg)\bigg)
\ge 0.
\]

We now simplify the above inequalities, 
  \begin{eqnarray}
\label{J22a}
\text{ if } x \neq 0,&& \frac{t-s}{\nu}-\frac{\Delta
  t}{2\nu}+\max\bigg\{H_{\alpha}^{-}\left(L_{\alpha}^{\prime}\left(\frac{x-y}{\varepsilon}\right)\right), H_{\alpha}^{+}(L_{\alpha}^{\prime}(\xi_{\alpha}^{+}))\bigg\}+O\left(\frac{\Delta x}{\varepsilon}\right)\le 0\\
\label{J22b}
\text{ if } x = 0, &&\frac{t-s}{\nu}-\frac{\Delta t}{2\nu}+\max\bigg(A,\max_{\substack{\alpha}}{H_{\alpha}}^{-}\bigg(L_{\alpha}^{\prime}\bigg(-\frac{y}{\varepsilon}\bigg)\bigg) \bigg)+O\bigg(\frac{\Delta x}{\varepsilon}\bigg)\le 0 \\
\label{J11a}
\text{ if } y \neq 0,&& 
\frac{t-s}{\nu} +\max\bigg\{H_{\alpha}^{-}\left(L_{\alpha}^{\prime}\left(\frac{x-y}{\varepsilon}\right)\right),H_{\alpha}^{+}(L_{\alpha}^{\prime}(\xi_{\alpha}^{-}))\bigg\}
+O(\sqrt{\delta})\ge \frac{\eta}{2 T^2}  \\
\label{J11b}
\text{ if } y = 0, &&\frac{t-s}{\nu} +\max\bigg(A,\max_{\substack{\alpha}} H_{\alpha}^{-}\bigg(L_{\alpha}^{\prime}\bigg(\frac{x}{\varepsilon}\bigg)\bigg)\bigg) \ge \frac{\eta}{2 T^2}.
\end{eqnarray}

Combining these viscosity inequalities and using Theorem \ref{JthCC} with the Hamiltonians $\beta\circ H_{\alpha}$, we deduce the same equalities for the Hamiltonians  $H_{\alpha}$ as $\beta$ is a bijection.
We use also the fact that $H_{\alpha}^{+}(L_{\alpha}^{\prime}(\xi_{\alpha}^{+}))=A$ and $H_{\alpha}^{+}(L_{\alpha}^{\prime}(\xi_{\alpha}^{-}))=A_{0}$, we get in all cases
\[ \eta \le \mathcal{O}\left(\frac{\Delta t}{\nu}\right)  + O \left(\frac{\Delta x}{\eps}\right) 
+ O(\sqrt{\delta}) =: \eta^\star.\]

\paragraph{Case $A=A_0$.} 
In this case the function $\mathcal{D}_{junction}$ is not of class $C^{1,1}$, see Proposition \ref{prop:c11}.
So we consider the function $\mathcal{D}_{0}$ associated with $A=A_{0}+\rho$ where $\rho$ is a small parameter. The main difference with the case $A>A_{0}$ is in the case $x\in J_{\alpha}$ and $y\in J_{\beta}$ with $\alpha\neq \beta$.
We only treat the case $x\in J_\alpha \setminus \{0\}$ and $y\in J_\beta$ with $\alpha\neq \beta$ since in the other cases the arguments are the same as in the proof of the case $A>A_{0}$. 
Since $\mathcal{D}_{0}(\frac{y}{\varepsilon},.)$ is nondecreasing and $H_{\alpha}^{-}(p)=A_{0}$ for $p\geq 0$, and $H_{\alpha}^{+}(p)=H_{\alpha}(p)$ for $p\geq 0$, we have
\begin{equation}
  \frac{t-s}{\nu}-\frac{\Delta t}{2\nu} +
  H_{\alpha}\left( \frac{\varepsilon}{\Delta x} \left\{
      \mathcal{D}_0\bigg(\frac{y}{\varepsilon},\frac{x}{\varepsilon}\bigg)
     -\mathcal{D}_0\bigg(\frac{y}{\varepsilon},\frac{x-\Delta
      x}{\varepsilon}\bigg) \right\} \right) \le 0.
\end{equation}
By using the Taylor expansion of the function $\mathcal{D}_{0}(\frac{y}{\varepsilon},.)$ of class $C^{1}$, there exists $\theta_{1}\in [0,1]$ such that 
 $$H_{\alpha}\left( \frac{\varepsilon}{\Delta x} \left\{
      \mathcal{D}_{0}\left(\frac{y}{\varepsilon},\frac{x}{\varepsilon}\right)
     -\mathcal{D}_{0}\left(\frac{y}{\varepsilon},\frac{x-\Delta
      x}{\varepsilon}\right) \right\} \right)=H_{\alpha}\left( \partial_{x}\mathcal{D}_{0}\left(\frac{y}{\varepsilon},\frac{x}{\varepsilon}\right) -\frac{\Delta x}{2\varepsilon}\partial_{xx}\mathcal{D}_{0}\left(\frac{y}{\varepsilon},\frac{x-\theta_{1}\Delta
      x}{\varepsilon}\right) \right).$$
      Using now a Taylor expansion of the function $H_{\alpha}$ of class $C^{2}$, there exists $\theta_{2}\in [0,1]$ such that 
      \begin{multline}
      \label{term1}
      H_{\alpha}\left( \frac{\varepsilon}{\Delta x} \left\{
      \mathcal{D}_{0}\left(\frac{y}{\varepsilon},\frac{x}{\varepsilon}\right)
     -\mathcal{D}_{0}\left(\frac{y}{\varepsilon},\frac{x-\Delta
      x}{\varepsilon}\right) \right\} \right)= \\ H_{\alpha}\left( \partial_{x}\mathcal{D}_{0}\left(\frac{y}{\varepsilon},\frac{x}{\varepsilon}\right)\right) -\frac{\Delta x}{2\varepsilon}\partial_{xx}\mathcal{D}_{0}\left(\frac{y}{\varepsilon},\frac{x-\theta_{1}\Delta
      x}{\varepsilon}\right)H'_{\alpha}\left( \partial_{x}\mathcal{D}_{0}\left(\frac{y}{\varepsilon},\frac{x}{\varepsilon}\right)\right) \\
      +\frac{1}{8}\left(\frac{\Delta x}{\varepsilon}\right)^{2}\partial_{xx}\mathcal{D}_{0}\left(\frac{y}{\varepsilon},\frac{x-\theta_{1}\Delta
      x}{\varepsilon}\right)^{2}H''_{\alpha}\left(\partial_{x}\mathcal{D}_{0}\left(\frac{y}{\varepsilon},\frac{x}{\varepsilon}\right) -\frac{\theta_{2}\Delta x}{2\varepsilon}\partial_{xx}\mathcal{D}_{0}\left(\frac{y}{\varepsilon},\frac{x-\theta_{1}\Delta
      x}{\varepsilon}\right)\right).
      \end{multline}
      
Using Taylor expansion for $\partial_{x}\mathcal{D}_{0}(.,\frac{y}{\varepsilon})$ and $H'_{\alpha}$ of class $C^{1}$  there exists $\theta_{3},\theta_{4}\in [0,1]$ such that 
\begin{equation}
\label{term2}
\begin{array}{lll}
H'_{\alpha}\left( \partial_{x}\mathcal{D}_{0}\left(\frac{y}{\varepsilon},\frac{x}{\varepsilon}\right)\right) & = & H'_{\alpha}\left(\partial_{x}\mathcal{D}_{0}\left(\frac{y}{\varepsilon},\frac{x-\theta_{1}\Delta x}{\varepsilon}\right)+\theta_{1}\frac{\Delta x}{\varepsilon}\partial_{xx}\mathcal{D}_{0}\left(\frac{y}{\varepsilon},\frac{x-\theta_{3}\Delta x}{\varepsilon}\right)\right)\\
 & = & H'_{\alpha}\left(\partial_{x}\mathcal{D}_{0}\left(\frac{y}{\varepsilon},\frac{x-\theta_{1}\Delta x}{\varepsilon}\right)\right)\\
  & & +\theta_{1}\frac{\Delta x} {\varepsilon}\partial_{xx}\mathcal{D}_{0}\left(\frac{y}{\varepsilon},\frac{x-\theta_{3}\Delta x}{\varepsilon}\right)H''_{\alpha}\left(\partial_{x}\mathcal{D}_{0}\left(\frac{y}{\varepsilon},\frac{x-\theta_{1}\Delta x}{\varepsilon}\right)+\theta_{4}\frac{\Delta x}{\varepsilon}\partial_{xx}\mathcal{D}_{0}\left(\frac{y}{\varepsilon},\frac{x-\theta_{3}\Delta x}{\varepsilon}\right)\right).
\end{array} 
\end{equation}

Notice that the terms in $H_{\alpha}^{\prime\prime}$ are bounded since $\frac{x}{\varepsilon}$, $\frac{y}{\varepsilon}$ and $\frac{\Delta x}{\varepsilon \rho}$ are bounded independentely of $\Delta x\leq 1$ as we take $\varepsilon=\rho=\Delta x^{\frac{2}{5}}$. 

So combining \eqref{term1} and \eqref{term2}, thanks to the $C^{1,1}$ regularity of the function $\mathcal{D}_{0}$, see
  Proposition \ref{prop:c11} we deduce that 
  $$H_{\alpha}\left( \frac{\varepsilon}{\Delta x} \left\{
      \mathcal{D}_{0}\left(\frac{y}{\varepsilon},\frac{x}{\varepsilon}\right)
     -\mathcal{D}_{0}\left(\frac{y}{\varepsilon},\frac{x-\Delta
      x}{\varepsilon}\right) \right\} \right)= H_{\alpha}\left( \partial_{x}\mathcal{D}_{0}\left(\frac{y}{\varepsilon},\frac{x}{\varepsilon}\right)\right) + \mathcal{O}\left(\frac{\Delta x}{\varepsilon\sqrt{\rho}}\right) +\mathcal{O}\left(\left(\frac{\Delta x}{\varepsilon\rho}\right)^{2}\right).$$
So combining the viscosity inequality and using the fact that $|F_A - F_{A_0}| \le \rho$ we have
\begin{equation}\label{sigma*2}
 \eta \le \mathcal{O}\left(\frac{\Delta t}{\nu}\right) + \mathcal{O}\left(\frac{\Delta x}{\varepsilon\sqrt{\rho}}\right) +\mathcal{O}\left(\left(\frac{\Delta x}{\varepsilon\rho}\right)^{2}\right)
+ \mathcal{O}(\sqrt{\delta}) + \rho =: \eta^\star.
\end{equation}

\paragraph{Step 3: Estimate of the supremum.}
We proved in the previous step that, if $\eta > \eta^\star$, then either $M_{\varepsilon,\delta}\leq 0$ or
$M_{\eps,\delta}$ is reached either for $t=0$ or $s=0$.

If $t=0$, then using Theorem \ref{thm:exis-cont}, we have
\[M_{\varepsilon,\delta}\le u_0(x)-u_0(y) -\frac{\gamma}{2\varepsilon}d^{2}(y,x)+C_{T}s-\frac{s^2}{2\nu}.\]
Using the fact that $u_0$ is $L_0$-Lipschitz, one can deduce
\[\begin{split}
M_{\varepsilon,\delta}&\le \sup\limits_{r\geq 0} \left(L_0 r-\frac{\gamma}{2\varepsilon}r^{2}\right) + \sup_{r>0} \left(Cr - \frac{r^2}{2\nu}\right)\\
&\le O(\varepsilon)+O(\nu).\end{split}\]

If $s=0$, then we can argue similarly (by using the stability of the numerical scheme Lemma \ref{lem:stab} and get 
\[ M_{\eps,\delta} \le O(\eps) +O(\nu).\]

\paragraph{Step 4: Conclusion.} We proved that for $\eta >
\eta^\star$, $M_{\eps,\delta} \le O(\eps) +
O(\nu)$. This implies that for all $(t,x) \in \mathcal{G}_h$, $t \le T/2$, 
we have
\[u^h(t,x)-u(t,x)\le \varepsilon
\tilde{\mathcal{D}_0}\left(\frac{x}{\varepsilon},\frac{x}{\varepsilon}\right)
+\frac{\delta}{2}d^2(x,0)+\frac{2\eta}{T}+ O(\eps)
+O(\nu)\] Replacing $\eta$ by $2\eta^{\star}$ and
recalling that $\tilde{\mathcal{D}}_0(x,x)\le C+A$ for all $x\in J$, we deduce that for
$(t,x) \in \mathcal{G}_\eps$ and $t\le T/2$ (after letting $\delta \to 0$),
\[  u^h(t,x)-u(t,x)\le O \left(\frac{\Delta t}{\nu}\right)
  + O\left(\frac{\Delta x}{\eps}\right) 
  +O(\varepsilon)+O(\nu).
\]
Using the CFL condition~\eqref{cflr} and optimizing with respect to
$\eps$ and $\nu$ yields the desired result on $[0,\frac{T}{2})$. Doing the whole proof with $u$ the solution of \eqref{eq:continuous} and $u^{h}$ the solution of the corresponding scheme \eqref{scheme} with $F=F_{A}$ on $[0,2T)$ yields the desired result on $[0,T)$.

\end{proof}

\begin{remark}
If for all $\alpha \in \{1, ..., N \}$, $\min H_{\alpha}=A_{0}$, then 
in the case where $A=A_0$, thanks to the $C^{1,1}$ regularity of the function $D_{0}$, see
  Proposition~\ref{prop:c11}, we can conclude as the case $A>A_{0}$ that the error estimate is of order $\Delta x^{\frac{1}{2}}$. 
\end{remark}

\subsection{Numerical simulations}

In this subsection, we give a numerical example which illustrates the convergence rate we obtained in the previous subsection. 
In the case $A>A_{0}$, we get an optimal error estimate of order $\Delta x^{\frac{1}{2}}$. But in the case $A=A_{0}$ we only have examples with an error estimate of order $\Delta x^{\frac{1}{2}}$ when in the proof we have $\Delta x^{\frac{2}{5}}$. So we wonder if the error estimate obtained in the proof is optimal for the case $A=A_{0}$.

Here we consider a junction with two branches $J_{1}=J_{2}=[0,X]$. We have the two following Hamiltonians, 
$$ H_{1}(p)=p^{2},$$
$$ H_{2}(p)=p^{2}-1,$$
and the initial data
$$u_{0}(x)=\left\{\begin{array}{ll}
\sin(0.2x) & \mbox{ if } x\in J_{1},\\
\sin(x) & \mbox{ if } x\in J_{2}.
\end{array}
\right. $$
In the simulation we take $X=0.1$ and we give the error $||u(T,.)-u^{h}(T,.)||_{\infty}$ at time $T=0.01$. 
Here we have $A_{0}=0$.

For $A=0$,  $A=0.1>A_{0}$ and $\Delta t=\frac{\Delta x}{10}$  we get the following result, see Figure \ref{casA0} and \ref{casA} ploted in logarithmic scale and the error values in Table \ref{tablA0} and \ref{tablA}.
\begin{figure}
\begin{center}
\begin{tabular}{|c|c|}
  \hline
  $\Delta x$  & $||u(T,.)-u^{h}(T,.)||_{\infty}$ \\
  \hline
  0.00250 & 1,192$\times 10^{-4}$ \\
   0.00100   &  0,753$ \times 10^{-4}$ \\ 
   0.00075  & 0,644$ \times 10^{-4}$ \\
   0.00050  & 0,503$ \times 10^{-4} $ \\
   0.00025  & 0,329 $ \times 10^{-4}$ \\
  \hline
\end{tabular}
 \caption{Error estimates for $A=A_{0}=0$}
 \label{tablA0}
\end{center}
\end{figure}
\begin{figure}
\begin{center}
\begin{tabular}{|c|c|}
  \hline
  $\Delta x$  & $||u(T,.)-u^{h}(T,.)||_{\infty}$ \\
  \hline
  0.00250 & 1,266$\times 10^{-4}$ \\
   0.00100   &  0,719$ \times 10^{-4}$ \\ 
   0.00075  & 0,616$ \times 10^{-4}$ \\
   0.00050  & 0,511$ \times 10^{-4} $ \\
   0.00025  & 0,350 $ \times 10^{-4}$ \\
  \hline
\end{tabular}
 \caption{Error estimates for $A=0.1>A_{0}$}
 \label{tablA}
\end{center}
\end{figure}
\begin{figure}
\begin{center}
  \includegraphics[width=12.0cm]{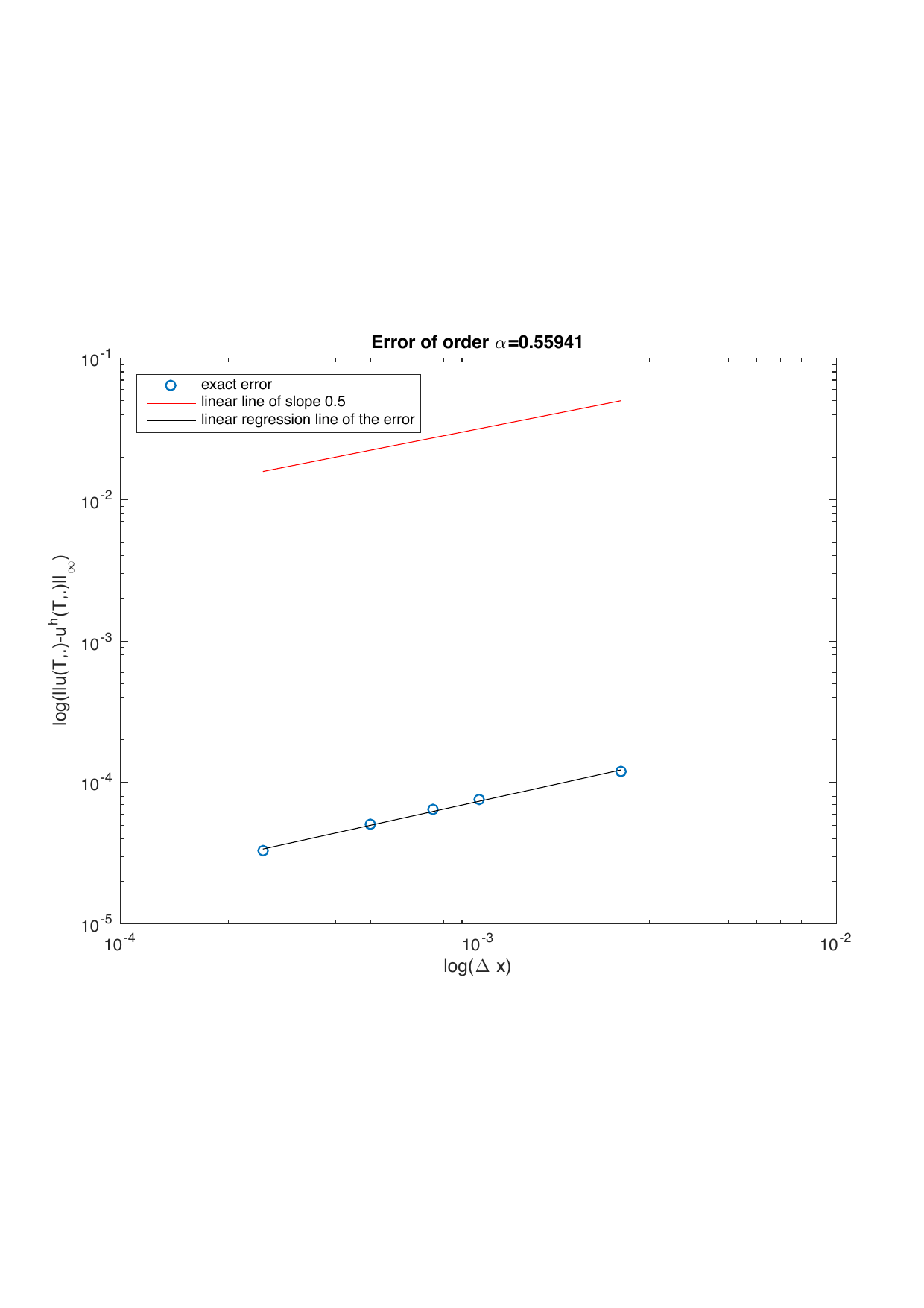}
  \caption{Error estimates for $A=A_{0}=0$}
  \label{casA0}
  \end{center}
  \end{figure} \begin{figure}
\begin{center}
  \includegraphics[width=12.0cm]{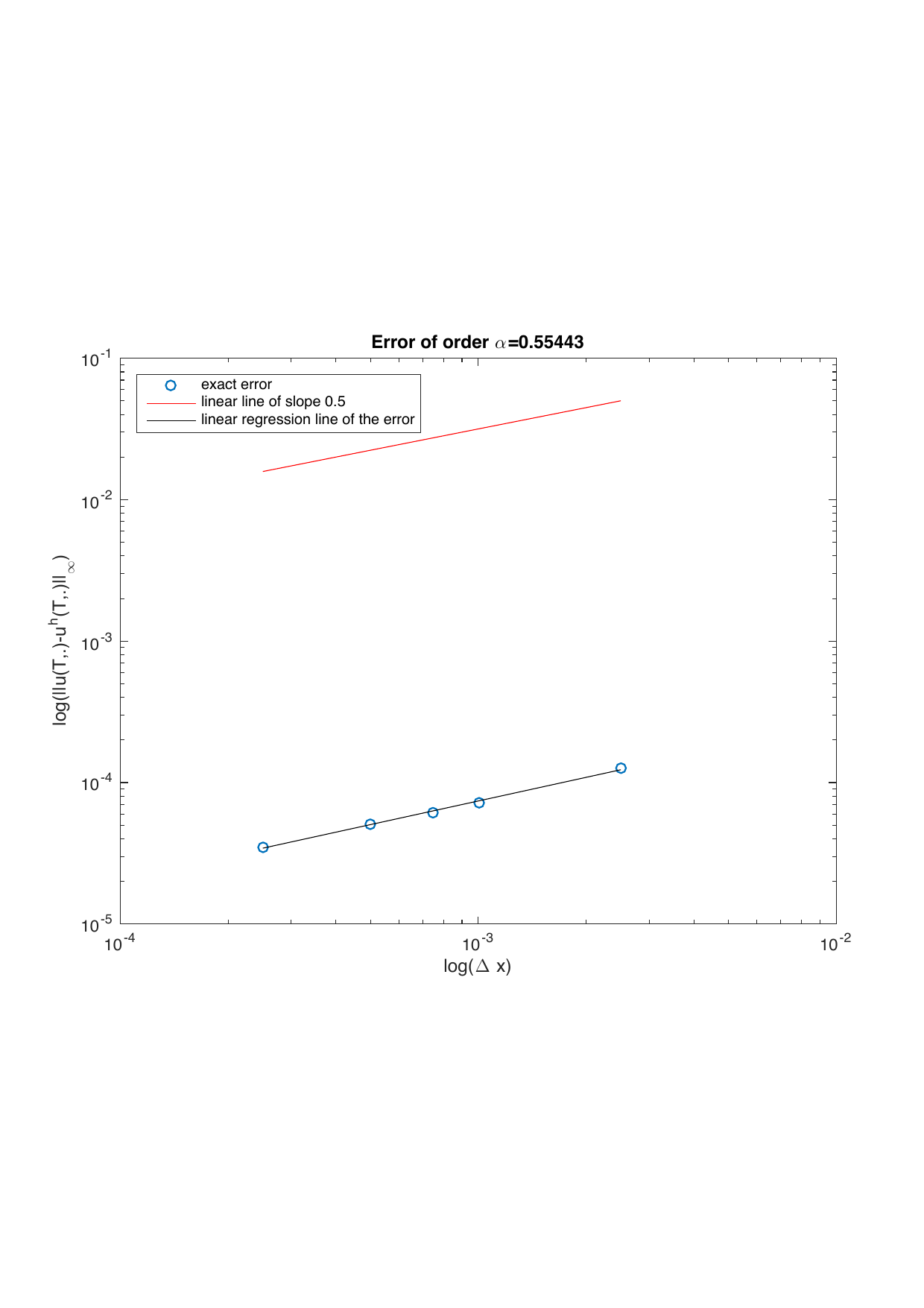}
  \caption{Error estimates for $A=0.1>A_{0}$}
  \label{casA}
  \end{center}
  \end{figure}

\appendix

\section{Proofs of some technical results} 

\subsection{Proof of a priori control}
\label{app:tech}

In order to prove Lemma \ref{Lem3}, we need the following one.
\begin{lemma}[\textbf{A priori control at the same time}]\label{Lem7}
  Assume that $u_0$ is Lipschitz continuous.  Let $T>0$ and let
  $u^h$ be a sub-solution of \eqref{scheme}-\eqref{eq:cid} and $u$
  be a super-solution of \eqref{eq:main}-\eqref{eq:ic}. Then there
  exists a constant $C=C_{T}>0$ such that for all $(t,x) \in
  \mathcal{G}_h$, $t \le T$, $y\in
  J$, we have
\begin{equation}
u^h(t,x)\le u(t,y)+C_{T}(1+d(x,y)).
\end{equation}
\end{lemma}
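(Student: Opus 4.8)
The plan is to build, for each fixed $y\in J$, a barrier on the grid that dominates the discrete sub-solution $u^\eps$ and that reduces to $u(t,y)+C(1+d(x,y))$ up to an additive constant, and then to invoke the discrete comparison principle (Lemma~\ref{Prop1}). Concretely, fix $y\in J$ and set
\[ \Phi_i^{\alpha,n} := u(n\Delta t,y)+C\bigl(1+d(x_i^\alpha,y)\bigr)+C_2\,n\Delta t \]
for constants $C,C_2>0$ to be chosen. Since $x\mapsto d(x,y)$ is $1$-Lipschitz on $J$, every discrete space gradient of $\Phi^n$ lies in $[-C,C]$, so the (Godunov-type) numerical Hamiltonian $\mathcal H_{num}$ arising in $S^\eps[\Phi^n]$ is bounded by some $C_H=C_H(C)$ depending only on $C$, the $H_\alpha$ and $F$.

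First I would check the initial inequality $\Phi_i^{\alpha,0}\ge u^\eps(0,x_i^\alpha)=u_0(x_i^\alpha)$: using the super-solution initial condition $u_\star(0,y)\ge u_0(y)$ together with the $L_0$-Lipschitz bound on $u_0$, one gets $\Phi_i^{\alpha,0}\ge u_0(y)+C\bigl(1+d(x_i^\alpha,y)\bigr)\ge u_0(x_i^\alpha)+C+(C-L_0)d(x_i^\alpha,y)$, which is $\ge u_0(x_i^\alpha)$ as soon as $C\ge L_0$. The core of the argument is to verify that $\Phi$ is a \emph{super-scheme}, i.e. $\Phi^{n+1}\ge S^\eps[\Phi^n]$. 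Because the spatial slopes of $\Phi^n$ are independent of $n$, this reduces at every grid point to
\[ u\bigl((n+1)\Delta t,y\bigr)-u(n\Delta t,y)+C_2\Delta t\ \ge\ -\Delta t\,\mathcal H_{num}\bigl[\Phi^n\bigr], \]
where $|\mathcal H_{num}|\le C_H$. To bound the left-hand side from below I would use the Lipschitz regularity of $u$ (Theorem~\ref{thm:exis-cont}): a spatially $L$-Lipschitz super-solution satisfies $u_t\ge-\Lambda$ in the viscosity sense, with $\Lambda=\sup\{|H_\alpha(p)|,|F(p)|:|p|\le L\}$, so that $u((n+1)\Delta t,y)-u(n\Delta t,y)\ge-\Lambda\Delta t$. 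Choosing $C_2\ge \Lambda+C_H$ then makes the super-scheme inequality hold everywhere. The discrete comparison principle (Lemma~\ref{Prop1}), valid under the CFL condition, yields $u^\eps(n\Delta t,x)\le\Phi_i^{\alpha,n}$; absorbing $C_2n\Delta t\le C_2T$ into the constant gives the claim with $C_T=C+C_2T$, and since $y\in J$ was arbitrary this proves the lemma.

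The main obstacle is precisely this super-scheme verification at the kink of $x\mapsto d(x,y)$, namely at the grid point closest to $y$, where $p_-\approx-C$ and $p_+\approx+C$ so that the monotone Hamiltonian $\max\{H_\alpha^+(p_-),H_\alpha^-(p_+)\}$ collapses to $\min H_\alpha=H_\alpha(p_0^\alpha)$ rather than to a large coercive value. This degeneracy is why the naive barrier $u(t,y)+C(1+d(x,y))$ fails to be a super-scheme, and why the relaxation term $C_2t$, combined with the one-sided time bound $u_t\ge-\Lambda$ coming from the Lipschitz regularity of $u$, is needed. The junction point $x=0$ and the points far from $y$ are treated in the same way but are easier, since there $\mathcal H_{num}$ is either coercive (when the slopes equal $\pm C$) or simply bounded, so the same choice of $C$ and $C_2$ covers all cases at once.
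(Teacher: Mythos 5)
Your route is genuinely different from the paper's: you construct an explicit barrier $\Phi^{\alpha,n}_i=u(n\Delta t,y)+C(1+d(x_i^\alpha,y))+C_2\,n\Delta t$ and invoke the discrete comparison principle, whereas the paper penalizes $\sup\big(u^\eps(t,x)-u(t,y)-C_2t-C_1\sqrt{1+d^2(x,y)}\big)$, writes the discrete viscosity inequality for $u^\eps$ and the continuous one for $u$ at the maximum point, subtracts them, and reaches a contradiction for $C_2$ large. Your diagnosis of the kink of $d(\cdot,y)$ is correct and explains why both proofs need the linear-in-$t$ relaxation. However, the key step $u((n+1)\Delta t,y)-u(n\Delta t,y)\ge-\Lambda\Delta t$ has a gap. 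In the interior of a branch it does follow from spatial Lipschitz continuity, since the subdifferential of an $L$-Lipschitz function of one variable is contained in $[-L,L]$. But at the junction point $y=0$ a test function $\varphi$ touching $u(t,\cdot)$ from below only satisfies $\partial_\alpha\varphi(0)\le L$, with no lower bound; since $F$ is decreasing and coercive as the slopes go to $-\infty$, $F(\varphi_x)$ is \emph{not} bounded by $\sup_{|p|\le L}|F(p)|$, and $\varphi_t\ge -F(\varphi_x)$ does not give $\varphi_t\ge-\Lambda$. One can repair this by touching with the specific function $\psi(t)-2L\,d(x,0)$, at the price of a constant involving $F(-2L,\dots,-2L)$, but this must be argued. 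Note also that this step uses spatial Lipschitz regularity of $u$, which is not a hypothesis of the lemma (it holds for the solution via Theorem~\ref{thm:exis-cont}; the paper's proof also imports an extra property of $u$, namely $|u(t,y)-u_0(y)|\le Ct$, but only to ensure the penalized supremum is attained).

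The second gap concerns the discrete comparison principle. Lemma~\ref{Prop1} rests on the monotonicity of $S^\eps$, which the CFL condition~\eqref{cflr} guarantees only for slopes in the ranges $[\underline{p}_\alpha,\overline{p}_\alpha]$, $[\underline{p}^0_\alpha,\overline{p}_\alpha]$ attached to the numerical solution. Your barrier has slopes $\pm C$ with $C\ge L_0$ (and $C$ possibly enlarged further by the other constraints), and nothing ensures these lie in that range; the inequality $S^\eps[u^{\eps,n}]\le S^\eps[\Phi^n]$ requires monotonicity along the whole segment joining $u^{\eps,n}$ to $\Phi^n$. This is fixable — replace $H_\alpha$ and $F$ by the modified $\tilde H_\alpha$, $\tilde F$ of Section~\ref{sec:grad}, which leave $u^\eps$ unchanged and make $S^\eps$ monotone for all slopes under the same CFL condition — but as written the invocation of Lemma~\ref{Prop1} is not justified. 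The paper's doubling-variables proof avoids both issues because it only ever evaluates $H$ and $F$ at the bounded gradients of the smooth penalization function.
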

We first derive Lemma \ref{Lem3} from Lemma \ref{Lem7}.
\begin{proof}[\textbf{Proof of Lemma \ref{Lem3}}]
Let us fix some $h$ and let us consider the sub-solution
$u^{-}$ of \eqref{scheme} and the super-solution $u^{+}$ of of
\eqref{eq:main} defined as :
\[u^{+}(t,x)=u_0(x)+C_{0}t\]
\[u^{-}(n\Delta t,i\Delta x)=u_0(i\Delta x)-C_{0}n\Delta t\] where
\[C_{0}=\max\bigg\{\abs{A},\max_{\substack{\alpha=1,\dots,N}}
\max_{\substack{\abs{p_{\alpha}}\le
    L_{0}}}\abs{H_{\alpha}(p_{\alpha})};\max_{\substack{\abs{p_{\alpha}}\le
    L_{0}}}F(p_1,\dots,p_N)\bigg\}\]
 and $L_0$ denotes the Lispchitz
constant of $u_0$.  We have for all
$(t,x)\in\mathcal{G}_{h}$, with $t\leq T$, $(s,y)\in[0,T)\times J$
\[u^{-}(t,x)-u^{+}(s,y)\le 2C_{0}T+L_{0}d(x,y).\]
We first apply Lemma \ref{Lem7} to control $u^h(t,x)-u^{-}(t,x)$
and then apply Lemma \ref{Lem3} to control $u^{+}(s,y)-u(s,y).$
Finally we get the control on $u^{h}(t,x)-u(s,y)$.
\end{proof}
We can now prove Lemma~\ref{Lem7}. 
\begin{proof}[\textbf{Proof of Lemma \ref{Lem7}}]
We define $\varphi$ in $J^2$ as
\[ \varphi(x,y)=\sqrt{1+d^2(x,y)}.\] 
Since,
\[d^2(x,y)=\left\lbrace\begin{array}{l}
(x-y)^2 \qquad \text{if}\quad (x,y)\in J_{\alpha}\times J_{\alpha}\\
(x+y)^2 \qquad \text{if}\quad (x,y)\in J_{\alpha}\times J_{\beta} \mbox{ with } \alpha\neq \beta
\end{array}\right.\] 
we see that $d^2$ (and consequently $\varphi$) is in $C^{1,1}$ in
$J^2$. Moreover $\varphi$ satisfies
\begin{equation}
\label{fi}
\abs{\varphi_{x}(x,y)},\abs{\varphi_{y}(x,y)}\le 1.
\end{equation}

Recalling that there exists $C>0$ such that 
\[ |u^h(t,x) -u_0(x) | \le C t \quad \text{ and } \quad |u(t,y) - u_0(y)| \le Ct \]
(see Theorem~\ref{thm:exis-cont} and \eqref{eq:estimuo}) and using that $u_0$ is $L_0$-Lipschitz continuous
we deduce that for all $(t,x) \in
  \mathcal{G}_h$, $t \le T$, $y\in
  J$,
$$u^h(t,x)-u(t,y)\leq 2Ct+L_{0}\varphi(x,y),$$
which yields the desired result.
\end{proof}
\subsection{Construction of $\tilde{F}$}\label{AppendixB}
\begin{lemma}
There exists $\tilde{F}$, such that 
\begin{enumerate}
\item $\tilde{F}$ satisfies \eqref{F};
\item $F=\tilde{F}$ in $Q_{0}$;
\item \label{3} For $p \in \rr^N$,
$(-\nabla \cdot \tilde F)(p) \le \sup_{Q_0} (-\nabla \cdot F)$.
\end{enumerate} 
\end{lemma}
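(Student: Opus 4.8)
The plan is to mimic the modification \eqref{eq:modified-h} of the Hamiltonians: keep $F$ on the box $Q_0=\prod_\alpha[\underline p_\alpha^0,\overline p_\alpha]$ and replace it outside by an affine-type modification whose gradient is controlled. Write $S:=\sup_{Q_0}(-\nabla\cdot F)$; since $F$ is strictly decreasing each $-\partial_\alpha F\ge 0$, so $S>0$. Let $P\colon\rr^N\to Q_0$ be the coordinatewise projection onto the box, $P(p)_\alpha=\min(\max(p_\alpha,\underline p_\alpha^0),\overline p_\alpha)$, and set $e_\alpha(p):=p_\alpha-P(p)_\alpha$, which is nonzero exactly in the directions along which $p$ has left the box. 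I would then look for $\tilde F$ of the form $\tilde F(p)=F(P(p))+\Phi(p)$, with a separable correction $\Phi(p)=\sum_\alpha\phi_\alpha(p_\alpha)$, where each $\phi_\alpha\equiv 0$ on $[\underline p_\alpha^0,\overline p_\alpha]$ and $\phi_\alpha$ is strictly decreasing and coercive (i.e. $\phi_\alpha(p_\alpha)\to+\infty$ as $p_\alpha\to-\infty$) outside.

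First I would check the easy properties. On $Q_0$ one has $P=\mathrm{id}$ and $\Phi\equiv 0$, so $\tilde F\equiv F$ there, giving property (2). Since $P$ is coordinatewise nondecreasing and $F$ is decreasing, $F\circ P$ is nonincreasing in each variable, and adding the strictly decreasing tails $\phi_\alpha$ makes $\tilde F$ strictly decreasing; coercivity of $\tilde F$ as $(p_1)_-+\dots+(p_N)_-\to+\infty$ comes from the downward side of the tails. For regularity, $\rr^N$ is cut by the box into finitely many polyhedral cells on each of which $P$ is affine and $F\circ P$ is $C^1$ (because $F$ is piecewise $C^1$); together with the piecewise-$C^1$ tails this shows $\tilde F$ is piecewise $C^1$. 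Because \eqref{F} only asks for piecewise $C^1$, I need not match gradients across the cell interfaces, only continuity, which holds by construction.

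The heart of the matter is property (3). On the interior of a cell let $I=\{\alpha:\ \underline p_\alpha^0\le p_\alpha\le\overline p_\alpha\}$ and let $C$ be its complement. A direct computation gives $\partial_\alpha(F\circ P)(p)=\partial_\alpha F(P(p))$ for $\alpha\in I$ and $\partial_\alpha(F\circ P)(p)=0$ for $\alpha\in C$, while $\partial_\alpha\Phi=\phi_\alpha'(p_\alpha)$ is supported in $C$. Hence
\[ (-\nabla\cdot\tilde F)(p)=\sum_{\alpha\in I}\big(-\partial_\alpha F(P(p))\big)+\sum_{\alpha\in C}\big(-\phi_\alpha'(p_\alpha)\big). \]
Since every summand $-\partial_\beta F(P(p))$ is nonnegative and $P(p)\in Q_0$, the first sum is a partial sum of $(-\nabla\cdot F)(P(p))\le S$, hence is itself $\le S$; more precisely it equals $(-\nabla\cdot F)(P(p))-\sum_{\alpha\in C}(-\partial_\alpha F(P(p)))$. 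Thus the bound $(-\nabla\cdot\tilde F)(p)\le S$ reduces to
\[ \sum_{\alpha\in C}\big(-\phi_\alpha'(p_\alpha)\big)\le \big(S-(-\nabla\cdot F)(P(p))\big)+\sum_{\alpha\in C}\big(-\partial_\alpha F(P(p))\big), \]
i.e. to bounding the slope of each tail in a clamped direction by the budget that is \emph{freed} when that direction leaves the box.

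The main obstacle is exactly this last point: the freed budget $-\partial_\alpha F(P(p))$ on the relevant face of $\partial Q_0$ depends on the other coordinates and may vanish where the normal derivative of $F$ is zero, whereas the separable slope $-\phi_\alpha'(p_\alpha)$ must remain strictly positive to keep $\tilde F$ strictly decreasing and to reach $+\infty$ downward. I would resolve this by choosing the tail slopes adaptively rather than purely separably — replacing $-\phi_\alpha'(p_\alpha)$ by a slope dominated by $\min\{\text{freed budget},\ \tfrac{S}{N}\}$ and localising where it is allowed to be positive — and by exploiting that exiting direction $\alpha$ frees precisely the nonnegative amount $-\partial_\alpha F$; where this budget degenerates, strictness is recovered a.e.\ from continuity and the piecewise-$C^1$ structure alone (a continuous piecewise-$C^1$ function fails to be strictly decreasing only if it is constant on an interval). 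Carrying out this adaptive choice cleanly, while preserving the easy properties above, is the technical core of the lemma.
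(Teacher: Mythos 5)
Your reduction of the problem is accurate: writing $\tilde F=F\circ P+\Phi$ with $P$ the coordinatewise projection onto $Q_0$, the divergence on each polyhedral cell splits exactly as you say, and property (3) does reduce to fitting the total slope of the tails into the budget $S-\sum_{\alpha\in I}(-\partial_\alpha F(P(p)))$ freed by the clamped directions. You also correctly diagnose why a purely separable $\Phi(p)=\sum_\alpha\phi_\alpha(p_\alpha)$ cannot work in general: the freed budget $-\partial_\alpha F(P(p))$ depends on the coordinates $p_\beta$, $\beta\neq\alpha$, and may degenerate. But at that point the proposal stops: you announce that the tail slopes must be chosen ``adaptively'' and that carrying this out ``is the technical core of the lemma,'' without actually producing the function. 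Since the lemma is an existence statement, this is a genuine gap --- the object $\tilde F$ is never constructed, and the tension you identify (an adaptive slope in direction $\alpha$ depends on $p_\beta$, which destroys separability and hence the automatic consistency of your ansatz when several coordinates exit the box simultaneously) is precisely the nontrivial part. The closing remark about recovering strictness ``a.e.\ from continuity and the piecewise-$C^1$ structure'' does not address this: it concerns strict monotonicity of $\tilde F$, not the divergence bound.

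For comparison, the paper resolves both difficulties explicitly. On the set $D_\alpha$ where only the coordinate $p_\alpha$ leaves $I_\alpha$, it sets $\tilde F(p)=F(p_1,\dots,\mathcal{P}_\alpha(p_\alpha),\dots,p_N)-C_\alpha\,(p_\alpha-\mathcal{P}_\alpha(p_\alpha))$ with $C_\alpha=\min_{q\in I_\alpha}\bigl(-\partial_{p_\alpha}F(p_1,\dots,q,\dots,p_N)\bigr)$, i.e.\ exactly the adaptive slope you call for (it is a function of the remaining coordinates), chosen so that the linear tail never spends more than the budget freed in direction $\alpha$. For points where several coordinates exit, instead of a separable sum it writes $p=\sum_\alpha\lambda_\alpha P_\alpha$ with $P_\alpha\in D_\alpha$ and barycentric weights $\lambda_\alpha=|\bar p_\alpha|/\sum_\beta|\bar p_\beta|$, and defines $\tilde F(p)=\sum_\alpha\lambda_\alpha\tilde F(P_\alpha)$; this interpolation is what reconciles coordinate-dependence of the slopes with a globally consistent, continuous, piecewise-$C^1$ definition. (Incidentally, the positivity $C_\alpha>0$ needed for strict monotonicity is itself delicate when $\partial_{p_\alpha}F$ vanishes somewhere on $Q_0$ --- the issue you flag --- so your instinct there is sound; but flagging an obstacle is not the same as overcoming it.)
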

\begin{proof}
Let $I_{\alpha}$ denote $[\underline{p}_{\alpha}^{0}; \overline{p}_\alpha]$ so that $Q_{0}=\prod_{\alpha} I_{\alpha}.$ 
We then define 
\[\tilde{F}(p)=F(\mathcal{P}_{1}(p_{1}),\dots,\mathcal{P}_{N}(p_N))
-\sum_{\alpha=1}^N C_{\alpha}(p_{\alpha}-\mathcal{P}_{\alpha}(p_\alpha)),
\] 
where 
\[C_{\alpha}= \min_{\substack{p\in Q_{0}}} 
\left(-\frac{\partial F}{\partial p_{\alpha}}(p_{1},\dots,p_{N})\right), \] and 
\[\mathcal{P}_{\alpha}(r)=
\begin{cases}
\underline{p}_{\alpha}^{0} & \text{ if } r<\underline{p}_{\alpha}^{0},\\
r & \text{ if } r \in I_\alpha, \\
\overline{p}_\alpha & \text{ if } r>\overline{p}_\alpha.
\end{cases}\] Remark that in view of the assumptions made on $F$, we
have $C_\alpha >0$ which will ensure that \eqref{F} holds true.
%
It is now easy to check that \eqref{F} and Item~\ref{3} are satisfied. 
This ends the proof of the Lemma. 
\end{proof} 

\subsection{Relation between the junction and BLN conditions}
\label{app:bln}

Consider the following scalar conservation law posed on $(0,+\infty)$,
\[ \left\{ \begin{array}{ll}
\partial_t v + \partial_x (H(v)) = 0, & t>0, x >0, \\
v (t,0) = v_b (t), & t>0, \\
v(0,x) = v_0 (x), & x >0.
\end{array} \right. \]
The usual BLN condition asserts that the trace $v_\tau$ of the entropy
solution at $x=0$ (if it exists) of the previous scalar conservation
law should satisfy
\[ \forall \kappa \in [\min(v_b,v_\tau),\max (v_b,v_\tau)], \qquad
\sgn (v_\tau - v_b) ( H(v_\tau)-H (\kappa) ) \le 0. \] If $H$ is
quasi-convex, this reduces to
\[ H (v_\tau ) = \max (H^-(v_\tau),H^+(v_b)).\]
This corresponds to a flux limiter $A = H^+ (v_b)$. 

\paragraph{Acknowledgements.} 
The authors acknowledge Cyril Imbert for his expert advice and encouragement
throughout working on this research paper. 
The authors acknowledge the support of
Agence Nationale de la Recherche through the funding of the project
HJnet ANR-12-BS01-0008-01. The second author’s PhD thesis is supported
by UL-CNRS Lebanon.

\bibliographystyle{plain}
\bibliography{ee}

\end{document}